\newcommand{\Mod}[1]{\ (\mathrm{mod}\ #1)}
\tikzstyle{vertex}=[auto=left,circle,draw=black,fill=white, inner sep=1.5]
\newtheorem{theorem}{Theorem}[section]
\newtheorem{lema}[theorem]{Lemma}
\newtheorem{corollary}{Corollary}[theorem]
\newtheorem{ex}{Example}[section]
\title{HS-integral and Eisenstein integral mixed Cayley graphs over abelian groups}
\author{ Monu Kadyan and Bikash Bhattacharjya\\
Department of Mathematics\\
Indian Institute of Technology Guwahati, India\\
monu.kadyan@iitg.ac.in, b.bikash@iitg.ac.in }
\date{}
\begin{document}
\maketitle

\vspace{-0.3in}

\begin{center}{\textbf{Abstract}}\end{center}

\noindent A mixed graph is called \emph{second kind hermitian integral}(or \emph{HS-integral}) if the eigenvalues of its Hermitian-adjacency matrix of second kind are integers. A mixed graph is called \emph{Eisenstein integral} if the eigenvalues of its (0, 1)-adjacency matrix are Eisenstein integers. Let $\Gamma$ be an abelian group. We characterize the set $S$ for which a mixed Cayley graph $\text{Cay}(\Gamma, S)$ is HS-integral. We also show that a mixed Cayley graph is Eisenstein integral if and only if it is HS-integral.

\vspace*{0.3cm}
\noindent 
\textbf{Keywords.} Hermitian adjacency matrix of second kind, mixed Cayley graph; HS-integral mixed graph; Eisenstein integral mixed graph \\
\textbf{Mathematics Subject Classifications:} 05C50, 05C25

\section{Introduction}
A \emph{mixed graph} $G$ is a pair $(V(G),E(G))$, where $V(G)$ and $E(G)$ are the vertex set and the edge set of $G$, respectively. Here $E(G)\subseteq V(G) \times V(G)\setminus \{(u,u)~|~u\in V(G)\}$. If $G$ is a mixed graph, then $(u,v)\in E(G)$ need not imply that $(v,u)\in E(G)$. An edge $(u,v)$ of a mixed graph $G$ is called \textit{undirected} if both $(u,v)$ and $(v,u)$ belong to $E(G)$. An edge $(u,v)$ of a mixed graph $G$ is called \textit{directed} if $(u,v)\in E(G)$ but $(v,u)\notin E(G)$. A mixed graph can have both undirected and directed edges. A mixed graph $G$ is said to be a \textit{simple graph} if all the edges of $G$ are undirected. A mixed graph $G$ is said to be an \textit{oriented graph} if all the edges of $G$ are directed. 

For a mixed graph $G$ on $n$ vertices, its (0, 1)-\textit{adjacency matrix} and \textit{Hermitian-adjacency matrix of second kind} are denoted by $\mathcal{A}(G)=(a_{uv})_{n\times n}$ and $\mathcal{H}(G)=(h_{uv})_{n\times n}$, respectively, where 

\[a_{uv} = \left\{ \begin{array}{rl}
	1 &\mbox{ if }
	(u,v)\in E \\ 
	0 &\textnormal{ otherwise,}
\end{array}\right.     ~~~~~\text{ and }~~~~~~ h_{uv} = \left\{ \begin{array}{cl}
	1 &\mbox{ if }
	(u,v)\in E \textnormal{ and } (v,u)\in E \\ \frac{1+i\sqrt{3}}{2} & \mbox{ if } (u,v)\in E \textnormal{ and } (v,u)\not\in E \\
	\frac{1-i\sqrt{3}}{2} & \mbox{ if } (u,v)\not\in E \textnormal{ and } (v,u)\in E\\
	0 &\textnormal{ otherwise.}
\end{array}\right.\] 

The Hermitian-adjacency matrix of second kind was introduced by Bojan Mohar~\cite{mohar2020new}. Let $G$ be a mixed graph. By an \emph{HS-eigenvalue} of $G$, we mean an eigenvalue of $\mathcal{H}(G)$. By an \emph{eigenvalue} of $G$, we mean an eigenvalue of $\mathcal{A}(G)$. Similarly, the \emph{HS-spectrum} of $G$, denoted $Sp_H(G)$, is the multi-set of the HS-eigenvalues of $G$, and the \emph{spectrum} of $G$, denoted $Sp(G)$, is the multi-set of the eigenvalues of $G$. Note that the Hermitian-adjacency matrix of second kind of a mixed graph is a Hermitian matrix, and so its HS-eigenvalues are real numbers. However, if a mixed graph $G$ contains at least one directed edge, then $\mathcal{A}(G)$ is non-symmetric. Accordingly, the eigenvalues of $G$ need not be real numbers. The matrix obtained by replacing $\frac{1+i\sqrt{3}}{2}$ and $\frac{1-i\sqrt{3}}{2}$ by $i$ and $-i$, respectively, in $\mathcal{H}(G)$, is called the \emph{Hermitian adjacency} matrix of $G$. Hermitian adjacency matrix of mixed graphs was introduced in~\cite{2017mixed, 2015mixed}. 

A mixed graph is called \textit{H-integral} if the eigenvalues of its Hermitian adjacency matrix are integers. A mixed graph $G$ is said to be \textit{HS-integral} if all the HS-eigenvalues of $G$ are integers. A mixed graph $G$ is said to be \textit{Eisenstein integral} if all the eigenvalues of $G$ are Eisenstein integers. Note that complex numbers of the form $a+b\omega_3$, where $a,b\in \mathbb{Z}, \omega_3=\frac{-1+i\sqrt{3}}{2}$, are called \emph{Eisenstein} integers. An HS-integral simple graph is called an \emph{integral} graph. Note that $\mathcal{A}(G)=\mathcal{H}(G)$ for a simple graph $G$. Therefore in case of a simple graph $G$, the terms HS-eigenvalue, HS-spectrum and HS-integrality of $G$ are the same with that of eigenvalue, spectrum and integrality of $G$, respectively. 

Integrality of simple graphs have been extensively studied in the past. Integral graphs were first defined by Harary and Schwenk~\cite{harary1974graphs} in 1974 and proposed a classification of integral graphs. See \cite{balinska2002survey} for a survey on integral graphs. Watanabe and Schwenk \cite{watanabe1979note,watanabe1979integral} proved several interesting results on integral trees in 1979. Csikvari \cite{csikvari2010integral} constructed integral trees with arbitrary large diameters in 2010. Further research on integral trees can be found in \cite{brouwer2008integral,brouwer2008small,wang2000some, wang2002integral}. In $2009$, Ahmadi et al. \cite{ahmadi2009graphs} proved that only a fraction of $2^{-\Omega (n)}$ of the graphs on $n$ vertices have an integral spectrum. Bussemaker et al. \cite{bussemaker1976there} proved that there are exactly $13$ connected cubic integral graphs. Stevanovi{\'c} \cite{stevanovic20034} studied the $4$-regular integral graphs avoiding $\pm3$ in the spectrum, and Lepovi{\'c} et al. \cite{lepovic2005there} proved that there are $93$ non-regular, bipartite integral graphs with maximum degree four. 

Let $S$ be a subset, not containing the identity element, of a group $\Gamma$. The set $S$ is said to be \textit{symmetric} (resp. \textit{skew-symmetric}) if $S$ is closed under inverse (resp. $a^{-1} \not\in S$ for all $a\in S$). Define $\overline{S}= \{u\in S: u^{-1}\not\in S \}$. Clearly, $S\setminus \overline{S}$ is symmetric and $\overline{S}$ is skew-symmetric. The \textit{mixed Cayley graph} $G=\text{Cay}(\Gamma,S)$ is a mixed graph, where $V(G)=\Gamma$ and $E(G)=\{ (a,b): a^{-1}b\in S , a,b\in \Gamma\}$. If $S$ is symmetric then $G$ is a \textit{simple Cayley graph}. If $S$ is skew-symmetric then $G$ is an \textit{oriented Cayley graph}.

	In 1982, Bridge and Mena \cite{bridges1982rational} introduced a characterization of integral Cayley graphs over abelian groups. Later on, same characterization was rediscovered by Wasin So \cite{2006integral} for cyclic groups in 2005. In 2009, Abdollahi and Vatandoost \cite{abdollahi2009cayley} proved that there are exactly seven connected cubic integral Cayley graphs. On the same year, Klotz and Sander \cite{klotz2010integral} proved that if a Cayley graph $\text{Cay}(\Gamma,S)$ over an abelian group $\Gamma$ is integral then $S$ belongs to the Boolean algebra $\mathbb{B}(\Gamma)$ generated by the subgroups of $\Gamma$. Moreover, they conjectured that the converse is also true, which was proved by Alperin and Peterson \cite{alperin2012integral}. In 2015, Ku et al. \cite{ku2015Cayley} proved that normal Cayley graphs over the symmetric groups are integral. In 2017, Lu et al. \cite{lu2018integral} gave necessary and sufficient condition for the integrality of Cayley graphs over the dihedral group $D_n$. In particular, they completely determined all integral Cayley graphs over the dihedral group $D_p$ for a prime $p$. In 2019, Cheng et al. \cite{cheng2019integral} obtained several simple sufficient conditions for the integrality of Cayley graphs over the dicyclic group $T_{4n}= \langle a,b| a^{2n}=1, a^n=b^2,b^{-1}ab=a^{-1} \rangle $. In particular, they also completely determined all integral Cayley graphs over the dicyclic group $T_{4p}$ for a prime $p$. In 2014, Godsil \emph{et al.} \cite{godsil2014rationality} characterized integral normal Cayley graphs. Xu \emph{et al.} \cite{xu2011gaussian} and Li \cite{li2013circulant} characterized the set $S$ for which the mixed circulant graph $\text{Cay}(\mathbb{Z}_n, S)$ is Gaussian integral. In 2006, So \cite{2006integral} introduced characterization of integral circulant graphs. In \cite{kadyan2021integralNormal}, the authors provide an alternative proof of the characterization obtained in~\cite{li2013circulant,xu2011gaussian}. H-integral mixed circulant graphs, H-integral mixed Cayley graphs over abelian groups, H-integral normal Cayley graphs and HS-integral mixed circulant graphs have been characterized in \cite{kadyan2021integral}, \cite{kadyan2021integralAbelian}, \cite{kadyan2021integralNormal} and \cite{kadyan2021Secintegral}, respectively.     

Throughout this paper, we consider Cayley graphs over abelian groups. The paper is organized as follows. In Section~\ref{prelAbelian}, some preliminary concepts and results are discussed. In particular, we express the HS-eigenvalues of a mixed Cayley graph as a sum of HS-eigenvalues of a simple Cayley graph and an oriented Cayley graph. In Section~\ref{sec3}, we obtain a sufficient condition on the connection set for the HS-integrality of an oriented Cayley graph. In Section~\ref{sec4}, we first characterize HS-integrality of oriented Cayley graphs by proving the necessity of the condition obtained in Section~\ref{sec3}. After that, we extend this characterization to mixed Cayley graphs. In Section~\ref{sec5}, we prove that a mixed Cayley graph is Eisenstein integral if and only if it is HS-integral.

%%%%%%%%%%%%%%%%%%%%%%%%%%%%%%%%%%%%%%%%%%%%%%%%%%%%%%%%%%%%%%%%%%%%%%%%%%%%%%%%%%%%%%%%%%%

\section{Preliminaries}\label{prelAbelian}

A \textit{representation} of a finite group $\Gamma$ is a homomorphism $\rho : \Gamma \to GL(V)$, where $GL(V)$ is  the group of automorphisms of a finite dimensional vector space $V$ over the complex field $\mathbb{C}$. The dimension of $V$ is called the \textit{degree} of $\rho$. Two representations $\rho_1$ and $\rho_2$ of $\Gamma$ on $V_1$ and $V_2$, respectively, are \textit{equivalent} if there is an isomorphism $T:V_1 \to V_2$ such that $T\rho_1(g)=\rho_2(g)T$ for all $g\in \Gamma$.

Let $\rho : \Gamma \to GL(V)$ be a representation. The \textit{character} $\chi_{\rho}: \Gamma \to \mathbb{C}$ of $\rho$ is defined by setting $\chi_{\rho}(g)=Tr(\rho(g))$ for $g\in \Gamma$, where $Tr(\rho(g))$ is the trace of the representation matrix of $\rho(g)$. By degree of $\chi_{\rho}$ we mean the degree of $\rho$ which is simply $\chi_{\rho}(1)$. If $W$ is a $\rho(g)$-invariant subspace of $V$ for each $g\in \Gamma$, then we say $W$ a $\rho(\Gamma)$-invariant subspace of $V$. If the only $\rho(\Gamma)$-invariant subspaces of $V$ are $\{ 0\}$ and $V$,  we say $\rho$ an \textit{irreducible representation} of $\Gamma$, and the corresponding character $\chi_{\rho}$ an \textit{irreducible character} of $\Gamma$. 

For a group $\Gamma$, we denote by $\text{IRR}(\Gamma)$ and $\text{Irr}(\Gamma)$ the complete set of non-equivalent irreducible representations of $\Gamma$  and the complete set of non-equivalent irreducible characters of $\Gamma$, respectively.

Throughout this paper, we consider $\Gamma$ to be an abelian group of order $n$. Let $S$ be a subset of $\Gamma$ with $0\not\in S$, where $0$ is the additive identity of $\Gamma$.  Then $\Gamma$ is isomorphic to the direct product of cyclic groups of prime power order, $i.e.$ 
 $$\Gamma\cong \mathbb{Z}_{n_1} \otimes \cdots \otimes \mathbb{Z}_{n_k},$$
 where $n=n_1 \cdots n_k$, and $n_j$ is a power of a prime number for each $j=1,...,k  $. We consider an abelian group $\Gamma$ as $\mathbb{Z}_{n_1} \otimes \cdots \otimes \mathbb{Z}_{n_k}$ of order $n=n_1...n_k$. We consider the elements $x\in \Gamma $ as elements of the cartesian product $\mathbb{Z}_{n_1} \otimes \cdots \otimes \mathbb{Z}_{n_k}$, $i.e.$ 
 $$x=(x_1,x_2,...,x_k),  \mbox{ where } x_j \in \mathbb{Z}_{n_j} \mbox{ for all } 1\leq j \leq k. $$
Addition in $\Gamma$ is done coordinate-wise modulo $n_j$. For a positive integer $k$ and $a\in \Gamma$, we denote by $ka$ or $a^k$ the $k$-fold sum of $a$ to itself, $(-k)a=k(-a)$, $0a=0$, and inverse of $a$ by $-a$. 
%The \textit{mixed Cayley graph} on $\Gamma$ with respect to S, denoted by $\text{Cay}(\Gamma,S)$, is the mixed graph on the set of vertices $\Gamma$, and edge set $\{ (v,v+s)| v\in \Gamma, s\in S \}$. If, in addition, $S$ is inverse-closed $(i.e., S=S^{-1}=\{ -s| s\in S\})$, then $\text{Cay}(\Gamma,S)$ can be regarded as an undirected graph, called \textit{Cayley graph} or \textit{simple Cayley graph}. If $S$ does not contain $-a$ for every $a\in S$, then $\text{Cay}(\Gamma,S)$ can be regarded as a directed graph, called \textit{oriented Cayley graph}.

\begin{lema}\label{lemma1}\cite{steinberg2009representation}
Let $\mathbb{Z}_n=\{ 0,1,...,n-1\}$ be a cyclic group of order $n$. Then $\text{IRR}(\mathbb{Z}_n)=\{ \phi_k: 0\leq k \leq n-1\}$, where $\phi_k(j)=\omega_n^{jk}$ for all $0\leq j,k \leq n-1$, and $\omega_n=\exp(\frac{2\pi i}{n})$.
\end{lema}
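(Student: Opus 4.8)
The plan is to combine two standard facts from the representation theory of finite groups: every irreducible complex representation of a finite abelian group is one-dimensional, and the number of inequivalent irreducible representations of a finite group equals its number of conjugacy classes. Since $\mathbb{Z}_n$ is abelian, each element forms its own conjugacy class, so $|\mathrm{IRR}(\mathbb{Z}_n)| = n$ and every member of $\mathrm{IRR}(\mathbb{Z}_n)$ has degree $1$.

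First I would justify the reduction to degree one. By Schur's lemma, if $\rho:\Gamma\to GL(V)$ is irreducible and $\Gamma$ is abelian, then for each $g\in\Gamma$ the operator $\rho(g)$ commutes with $\rho(h)$ for all $h\in\Gamma$, hence it intertwines $\rho$ with itself and must be a scalar multiple of the identity; consequently every one-dimensional subspace of $V$ is $\rho(\Gamma)$-invariant, which forces $\dim V = 1$. (Alternatively, one may invoke $\sum_{\rho\in\mathrm{IRR}(\Gamma)}(\deg\rho)^2 = |\Gamma|$ together with $|\mathrm{IRR}(\Gamma)| = |\Gamma|$ to conclude directly that each degree equals $1$.)

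It then remains to list the one-dimensional representations of $\mathbb{Z}_n$. Such a representation is a group homomorphism $\phi:\mathbb{Z}_n\to\mathbb{C}\setminus\{0\}$, and since $\mathbb{Z}_n=\langle 1\rangle$ it is determined by $z:=\phi(1)$; the relation $n\cdot 1 = 0$ forces $z^n = 1$, so $z=\omega_n^k$ for a unique $k\in\{0,\dots,n-1\}$. Conversely, each such $k$ yields a well-defined homomorphism $\phi_k(j)=\omega_n^{jk}$, well-defined because $\omega_n^{jk}$ depends only on $j$ modulo $n$; the maps $\phi_0,\dots,\phi_{n-1}$ are pairwise distinct as functions (evaluate at $j=1$), and two one-dimensional representations are equivalent precisely when they coincide. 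Thus we have exhibited exactly $n$ pairwise inequivalent irreducible representations, and since $|\mathrm{IRR}(\mathbb{Z}_n)| = n$, this list is complete. The only step needing genuine input is the reduction to one-dimensional representations via Schur's lemma (or the orthogonality/class-equation count); everything after that is elementary, and since the statement is quoted from \cite{steinberg2009representation}, the details may simply be referenced.
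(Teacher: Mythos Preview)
Your proposal is correct and complete. The paper itself does not supply a proof of this lemma at all: it is stated as a quotation from \cite{steinberg2009representation} and used as a black box, so there is no in-paper argument to compare against. Your Schur's-lemma reduction to degree one followed by the explicit enumeration of homomorphisms $\mathbb{Z}_n\to\mathbb{C}^*$ is exactly the standard textbook proof one would find in the cited reference.
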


\begin{lema}\label{lemma2}\cite{steinberg2009representation}
Let $\Gamma_1$,$\Gamma_2$ be abelian groups of order $m,n$, respectively. Let $\text{IRR}(\Gamma_1)=\{ \phi_1,...,\phi_m\}$, and $\text{IRR}(\Gamma_2)=\{ \rho_1,...,\rho_n\}$. Then $$\text{IRR}(\Gamma_1 \times \Gamma_2)=\{ \psi_{kl} : 1\leq k \leq m, 1\leq l \leq n \},$$ 
where $\psi_{kl}: \Gamma_1 \times \Gamma_2 \to \mathbb{C}^* \mbox{ and } \psi_{kl}(g_1,g_2)=\phi_k(g_1)\rho_l(g_2)$ for all $g_1\in \Gamma_1, g_2\in \Gamma_2$.
\end{lema}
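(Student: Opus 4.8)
The plan is to combine the structural fact that every irreducible representation of a finite abelian group is one‑dimensional with a dimension count. Write $\widehat{\Gamma}=\mathrm{Hom}(\Gamma,\mathbb{C}^*)$ for the set of homomorphisms from a finite abelian group $\Gamma$ into the multiplicative group $\mathbb{C}^*$. Since every irreducible representation of a finite abelian group has degree $1$, we may identify $\text{IRR}(\Gamma)$ with $\widehat{\Gamma}$, and the identity $\sum_{\rho\in\text{IRR}(\Gamma)}(\deg\rho)^2=|\Gamma|$ then forces $|\text{IRR}(\Gamma)|=|\Gamma|$. In particular $\Gamma_1\times\Gamma_2$ has exactly $mn$ pairwise non‑equivalent irreducible representations, so it suffices to exhibit $mn$ distinct irreducible representations of $\Gamma_1\times\Gamma_2$ and show they are exactly the $\psi_{kl}$.

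First I would check that each $\psi_{kl}$ is an irreducible representation of $\Gamma_1\times\Gamma_2$. Viewing $\phi_k$ and $\rho_l$ as homomorphisms into $\mathbb{C}^*$ and using that $\mathbb{C}^*$ is abelian, one verifies directly that
\[
\psi_{kl}\bigl((g_1,g_2)(g_1',g_2')\bigr)=\phi_k(g_1g_1')\rho_l(g_2g_2')=\bigl(\phi_k(g_1)\rho_l(g_2)\bigr)\bigl(\phi_k(g_1')\rho_l(g_2')\bigr)=\psi_{kl}(g_1,g_2)\,\psi_{kl}(g_1',g_2'),
\]
so $\psi_{kl}\colon\Gamma_1\times\Gamma_2\to\mathbb{C}^*=GL_1(\mathbb{C})$ is a one‑dimensional, hence irreducible, representation.

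Next I would prove that the $\psi_{kl}$ are pairwise non‑equivalent; for one‑dimensional representations, equivalence is the same as equality of the underlying homomorphisms. If $\psi_{kl}=\psi_{k'l'}$, then restricting both sides to the subgroup $\Gamma_1\times\{e_2\}$ gives $\phi_k=\phi_{k'}$, hence $k=k'$, and restricting to $\{e_1\}\times\Gamma_2$ gives $\rho_l=\rho_{l'}$, hence $l=l'$. Thus $\{\psi_{kl}:1\le k\le m,\ 1\le l\le n\}$ is a set of $mn$ distinct irreducible representations of $\Gamma_1\times\Gamma_2$; since $\Gamma_1\times\Gamma_2$ has precisely $mn$ non‑equivalent irreducible representations, this set is all of $\text{IRR}(\Gamma_1\times\Gamma_2)$, which is the claim.

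There is no genuine obstacle here; the only external input is the counting fact $|\text{IRR}(\Gamma)|=|\Gamma|$ for finite abelian $\Gamma$, which should be taken from the cited reference (or deduced from $\sum(\deg\rho)^2=|\Gamma|$ together with one‑dimensionality) rather than from Lemma~\ref{lemma1}/Lemma~\ref{lemma2} themselves, in order to keep the argument non‑circular. Alternatively one can bypass the count entirely by invoking the standard theorem that $\text{IRR}(G\times H)=\{\rho\otimes\sigma:\rho\in\text{IRR}(G),\ \sigma\in\text{IRR}(H)\}$ for arbitrary finite groups $G,H$, and specializing to abelian $G=\Gamma_1$, $H=\Gamma_2$, where the (outer) tensor product $\rho\otimes\sigma$ of two one‑dimensional representations is precisely the pointwise product $\psi_{kl}$.
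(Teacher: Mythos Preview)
Your argument is correct: each $\psi_{kl}$ is a one-dimensional (hence irreducible) representation, the restriction trick shows they are pairwise inequivalent, and the count $|\text{IRR}(\Gamma_1\times\Gamma_2)|=|\Gamma_1\times\Gamma_2|=mn$ finishes the proof. The paper itself does not prove this lemma at all; it is simply quoted from \cite{steinberg2009representation}, so there is no ``paper's own proof'' to compare against. Your write-up is essentially the standard textbook argument one would find in that reference (or any introduction to representation theory), and your closing remark about the general tensor-product description of $\text{IRR}(G\times H)$ is the right way to situate it.
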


Consider $\Gamma = \mathbb{Z}_{n_1}\times \mathbb{Z}_{n_2}\times ...\times  \mathbb{Z}_{n_k}$. By Lemma \ref{lemma1} and Lemma \ref{lemma2}, $\text{IRR}(\Gamma)=\{ \psi_{\alpha}: \alpha \in \Gamma\}$, where 
\begin{equation}
\psi_{\alpha}(x)=\prod_{j=1}^{k}\omega_{n_j}^{\alpha_j x_j} \textnormal{ for all $\alpha=( \alpha_1,...,\alpha_k),x=(x_1,...,x_k) \in \Gamma$},\label{character}
\end{equation}
and $\omega_{n_j}=\exp\left(\frac{2\pi i}{n_j}\right)$. Since $\Gamma$ is an abelian group, every irreducible representation of $\Gamma$ is 1-dimensional and thus it can be identified with its characters. Hence $\text{IRR}(\Gamma)=\text{Irr}(\Gamma)$. For $x\in \Gamma$, let $\text{ord}(x)$ denote the order of $x$. The following lemma can be easily proved.

\begin{lema}\label{Basic}
Let $\Gamma$ be an abelian group and $\text{Irr}(\Gamma)=\{\psi_{\alpha} : \alpha \in \Gamma \}$. Then the following statements are true.
\begin{enumerate}[label=(\roman*)]
\item $\psi_{\alpha}(x)=\psi_x({\alpha})$ for all $x,\alpha \in \Gamma$.
\item $(\psi_{\alpha}(x))^{\text{ord}(x)}=(\psi_{\alpha}(x))^{\text{ord}(\alpha)}=1$ for all $x,\alpha \in \Gamma$.
\end{enumerate} 
\end{lema}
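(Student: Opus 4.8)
The statement is elementary and I would treat it as a warm-up. For part (i), I would simply unwind the explicit formula for the irreducible characters given in \eqref{character}. Since $\psi_{\alpha}(x)=\prod_{j=1}^{k}\omega_{n_j}^{\alpha_j x_j}$ and the integer exponents satisfy $\alpha_j x_j=x_j\alpha_j$, we immediately obtain
$$\psi_{\alpha}(x)=\prod_{j=1}^{k}\omega_{n_j}^{\alpha_j x_j}=\prod_{j=1}^{k}\omega_{n_j}^{x_j\alpha_j}=\psi_{x}(\alpha).$$
There is nothing to overcome here beyond reading off the definition.

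For part (ii), the key point is that each $\psi_{\alpha}\in\text{Irr}(\Gamma)$ is a group homomorphism from $(\Gamma,+)$ into the multiplicative group $\mathbb{C}^{*}$, since it is a one-dimensional representation. Hence, using $\text{ord}(x)\cdot x=0$ and the fact that a homomorphism sends the identity $0$ to $1$,
$$\big(\psi_{\alpha}(x)\big)^{\text{ord}(x)}=\psi_{\alpha}\big(\text{ord}(x)\cdot x\big)=\psi_{\alpha}(0)=1.$$
Applying the same argument with the roles of $x$ and $\alpha$ interchanged, and then using part (i) to rewrite $\psi_{x}(\alpha)=\psi_{\alpha}(x)$, yields $\big(\psi_{\alpha}(x)\big)^{\text{ord}(\alpha)}=\big(\psi_{x}(\alpha)\big)^{\text{ord}(\alpha)}=1$.

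If one prefers to avoid invoking the homomorphism property, the same conclusion follows directly from \eqref{character}: writing $m=\text{ord}(x)$, the coordinate-wise relation $m x_j\equiv 0\pmod{n_j}$ forces $n_j\mid m\,\alpha_j x_j$, so each factor $\omega_{n_j}^{m\alpha_j x_j}$ equals $1$, and the claim for $\text{ord}(\alpha)$ follows after applying (i). The only point requiring a little care is the bookkeeping between the additive notation in $\Gamma$ and the multiplicative notation for character values, together with the fact that $\omega_{n_j}$ has multiplicative order exactly $n_j$; I do not expect any genuine obstacle.
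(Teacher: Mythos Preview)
Your argument is correct. The paper does not actually supply a proof of this lemma; it simply remarks that it ``can be easily proved'' and moves on, so your direct verification from the explicit formula~\eqref{character} and the homomorphism property is exactly the kind of routine check the authors intend the reader to fill in.
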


Let $f : \Gamma \to \mathbb{C}$ be a function. The \textit{Cayley color digraph} of $\Gamma$ with \textit{connection function} $f$, denoted by $\text{Cay}(\Gamma, f)$, is defined to be the directed graph with vertex set $\Gamma$ and arc set $\{ (x,y): x,y \in \Gamma\}$ such that each arc $(x,y)$ is colored by $f(x^{-1}y)$. The \textit{adjacency matrix} of $\text{Cay}(\Gamma, f)$ is defined to be the matrix whose rows and columns are indexed by the elements of $\Gamma$, and the $(x,y)$-entry  is equal to $f(x^{-1}y)$. The eigenvalues of $\text{Cay}( \Gamma, f)$ are simply the eigenvalues of its adjacency matrix.

\begin{theorem}\cite{babai1979spectra}\label{EigNorColCayMix}
Let $\Gamma$ be a finite abelian group and $\text{Irr}(\Gamma)=\{\psi_{\alpha} : \alpha \in \Gamma \}$. Then the spectrum of the Cayley color digraph $\text{Cay}(\Gamma, f)$ is $\{ \gamma_\alpha : \alpha\in \Gamma \},$ where $$\gamma_{\alpha} = \sum_{y\in \Gamma} f(y)\psi_{\alpha}(y) \hspace{0.2cm} \textnormal{ for all } \alpha \in \Gamma.$$
\end{theorem}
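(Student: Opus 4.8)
The plan is to exhibit an explicit eigenbasis of the adjacency matrix $M=(f(x^{-1}y))_{x,y\in\Gamma}$ of $\text{Cay}(\Gamma,f)$ built from the irreducible characters of $\Gamma$. For each $\alpha\in\Gamma$, let $v_{\alpha}\in\mathbb{C}^{\Gamma}$ be the column vector whose $x$-entry is $\psi_{\alpha}(x)$. Since $\Gamma$ is abelian, every $\psi_{\alpha}$ is a $1$-dimensional representation, hence a group homomorphism $\Gamma\to\mathbb{C}^{*}$, so that $\psi_{\alpha}(xz)=\psi_{\alpha}(x)\psi_{\alpha}(z)$ for all $x,z\in\Gamma$.

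First I would compute the $x$-entry of $Mv_{\alpha}$. Substituting $z=x^{-1}y$ (so $y=xz$, and $z$ runs over all of $\Gamma$ as $y$ does, this being a bijection of $\Gamma$) and using the homomorphism property,
$$(Mv_{\alpha})_{x}=\sum_{y\in\Gamma}f(x^{-1}y)\psi_{\alpha}(y)=\sum_{z\in\Gamma}f(z)\psi_{\alpha}(xz)=\psi_{\alpha}(x)\sum_{z\in\Gamma}f(z)\psi_{\alpha}(z)=\gamma_{\alpha}\psi_{\alpha}(x).$$
Hence $Mv_{\alpha}=\gamma_{\alpha}v_{\alpha}$, so $v_{\alpha}$ is an eigenvector of $M$ with eigenvalue $\gamma_{\alpha}$; it is nonzero because $\psi_{\alpha}(0)=1$, where $0$ is the identity of $\Gamma$.

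Next I would show that $\{v_{\alpha}:\alpha\in\Gamma\}$ is a basis of $\mathbb{C}^{\Gamma}$, which upgrades the above list of eigenvalues to the full spectrum with multiplicities. This follows from the orthogonality relations for the irreducible characters of the finite abelian group $\Gamma$: with respect to the standard Hermitian inner product on $\mathbb{C}^{\Gamma}$,
$$\langle v_{\alpha},v_{\beta}\rangle=\sum_{x\in\Gamma}\psi_{\alpha}(x)\overline{\psi_{\beta}(x)}=\sum_{x\in\Gamma}\psi_{\alpha-\beta}(x)=n\,\delta_{\alpha,\beta},$$
using that $|\psi_{\beta}(x)|=1$ gives $\overline{\psi_{\beta}(x)}=\psi_{-\beta}(x)$, that characters multiply so $\psi_{\alpha}\psi_{-\beta}=\psi_{\alpha-\beta}$, and that a nontrivial character of $\Gamma$ sums to zero over $\Gamma$. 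Thus the $n=|\Gamma|$ vectors $v_{\alpha}$ are pairwise orthogonal, hence linearly independent, hence an eigenbasis of $M$; therefore the multiset of eigenvalues of $M$, i.e.\ the spectrum of $\text{Cay}(\Gamma,f)$, is exactly $\{\gamma_{\alpha}:\alpha\in\Gamma\}$.

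There is essentially no serious obstacle here; the only points needing care are bookkeeping ones: keeping the group operation consistent (the paper writes $\Gamma$ additively but uses $x^{-1}y$ for the color of the arc $(x,y)$), checking that $z=x^{-1}y$ is a bijection of $\Gamma$ for fixed $x$, and invoking character orthogonality (equivalently, the invertibility of the character table of $\Gamma$) to obtain a genuine basis rather than merely a list of eigenvalues. If one prefers to avoid orthogonality, one may instead observe that the $v_{\alpha}$ are the columns of the generalized discrete Fourier transform matrix of $\Gamma$, which, by Lemma~\ref{lemma1} and Lemma~\ref{lemma2}, is a tensor product of Vandermonde matrices built from distinct roots of unity and hence invertible.
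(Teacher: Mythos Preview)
Your argument is correct and is the standard one: the character vectors $v_\alpha$ are simultaneous eigenvectors of the adjacency matrix with eigenvalues $\gamma_\alpha$, and character orthogonality shows they form a basis, so the multiset $\{\gamma_\alpha:\alpha\in\Gamma\}$ is the full spectrum. Note that the paper does not actually prove this theorem; it is quoted from Babai~\cite{babai1979spectra} without proof, and your argument is essentially the same diagonalization-by-characters that underlies Babai's result (and is implicit in the paper's later use of the matrix $E$ with $E_{x,y}=\psi_x(y)$ satisfying $EE^*=nI_n$).
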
 

For a subset $S$ of an abelian group $\Gamma$, let $S^{-1}=\{s^{-1}~:~s\in S\}$.

\begin{lema}\cite{babai1979spectra}\label{imcgoa3}
Let $\Gamma$ be an abelian group and $\text{Irr}(\Gamma)=\{\psi_{\alpha} : \alpha \in \Gamma \}$. Then the HS-spectrum of the mixed Cayley graph $\text{Cay}(\Gamma, S)$ is $\{ \gamma_\alpha : \alpha \in \Gamma \}$, where  $\gamma_{\alpha}=\lambda_{\alpha}+\mu_{\alpha}$ and $$\lambda_{\alpha}=\sum_{s\in S\setminus \overline{S}} \psi_{\alpha}(s),\hspace{0.2cm} \mu_{\alpha}=\sum_{s\in\overline{S}}\left( \omega_6 \psi_{\alpha}(s)+ \omega_6^5\psi_{\alpha}(-s)\right) \textnormal{ for all } \alpha \in \Gamma.$$
\end{lema}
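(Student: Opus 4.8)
The plan is to realise the matrix $\mathcal{H}(\text{Cay}(\Gamma,S))$ as the adjacency matrix of a suitable Cayley color digraph over $\Gamma$, and then read off its spectrum from Theorem~\ref{EigNorColCayMix}. First I would record the identities $\frac{1+i\sqrt{3}}{2}=\omega_6$ and $\frac{1-i\sqrt{3}}{2}=\omega_6^5=\overline{\omega_6}$. For $a,b\in\Gamma$ we have $(a,b)\in E(G)\iff a^{-1}b\in S$ and $(b,a)\in E(G)\iff a^{-1}b\in S^{-1}$, so the $(a,b)$-entry $h_{ab}$ of $\mathcal{H}(G)$ depends on $a,b$ only through $s:=a^{-1}b$, and only via whether $s\in S$ and whether $s\in S^{-1}$.

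Next I would define a connection function $f:\Gamma\to\mathbb{C}$ by $f(s)=1$ for $s\in S\setminus\overline{S}$, $f(s)=\omega_6$ for $s\in\overline{S}$, $f(s)=\omega_6^5$ for $s\in\overline{S}^{-1}$, and $f(s)=0$ otherwise. The point that genuinely needs checking is that the three sets $S\setminus\overline{S}$, $\overline{S}$ and $\overline{S}^{-1}$ are pairwise disjoint and that their union is exactly $S\cup S^{-1}$: since $\overline{S}$ is skew-symmetric, $\overline{S}\cap\overline{S}^{-1}=\emptyset$ and $\overline{S}^{-1}\cap S=\emptyset$, while $S\setminus\overline{S}$ is symmetric, contained in $S$, and disjoint from $\overline{S}$. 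Comparing this with the four-case definition of $h_{uv}$ (for $u=a$, $v=b$, $s=a^{-1}b$) then yields $h_{ab}=f(a^{-1}b)$ for all $a,b\in\Gamma$, so $\mathcal{H}(G)$ is precisely the adjacency matrix of $\text{Cay}(\Gamma,f)$. As a consistency check one also has $f(s^{-1})=\overline{f(s)}$, which re-confirms that $\mathcal{H}(G)$ is Hermitian.

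Finally, Theorem~\ref{EigNorColCayMix} gives the HS-spectrum of $G$ as $\{\gamma_\alpha:\alpha\in\Gamma\}$ with $\gamma_\alpha=\sum_{y\in\Gamma}f(y)\psi_\alpha(y)$. I would split this sum according to the three parts of the support of $f$, obtaining
$$\gamma_\alpha=\sum_{s\in S\setminus\overline{S}}\psi_\alpha(s)+\omega_6\sum_{s\in\overline{S}}\psi_\alpha(s)+\omega_6^5\sum_{s\in\overline{S}^{-1}}\psi_\alpha(s),$$
and then apply the substitution $s\mapsto s^{-1}$ to the last sum (valid since $\Gamma$ is abelian), which rewrites $\sum_{s\in\overline{S}^{-1}}\psi_\alpha(s)$ as $\sum_{s\in\overline{S}}\psi_\alpha(s^{-1})=\sum_{s\in\overline{S}}\psi_\alpha(-s)$. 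Combining the second and third sums then gives $\mu_\alpha=\sum_{s\in\overline{S}}\left(\omega_6\psi_\alpha(s)+\omega_6^5\psi_\alpha(-s)\right)$ and identifies $\lambda_\alpha$ with the first sum, which is the claimed formula. I do not expect a real obstacle here: the only thing that could go wrong is misassigning the pairs $\{s,s^{-1}\}$ to the colour classes, so the disjointness-and-exhaustiveness bookkeeping in the middle step is the crux.
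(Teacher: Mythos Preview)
Your proposal is correct and follows exactly the paper's approach: define the connection function $f_S$ with values $1,\omega_6,\omega_6^5,0$ on $S\setminus\overline{S},\ \overline{S},\ \overline{S}^{-1}$ and the complement, identify $\mathcal{H}(\text{Cay}(\Gamma,S))$ with the adjacency matrix of $\text{Cay}(\Gamma,f_S)$, and invoke Theorem~\ref{EigNorColCayMix}. Your additional bookkeeping (disjointness of the colour classes and the substitution $s\mapsto s^{-1}$) just makes explicit what the paper leaves implicit.
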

\begin{proof}
Define $f_S: \Gamma \to \{0,1,\omega_6,\omega_6^5\}$ such that $$f_S(s)= \left\{ \begin{array}{rl}
		1 & \mbox{if } s\in S \setminus \overline{S} \\
		\omega_6 & \mbox{if } s\in \overline{S}\\
		\omega_6^5 & \mbox{if } s\in \overline{S}^{-1}\\ 
		0 &   \mbox{otherwise}. 
	\end{array}\right.$$ 
The adjacency matrix of the Cayley color digraph $\text{Cay}(\Gamma, f_S)$ agrees with the Hermitian adjacency matrix of the mixed Cayley graph $\text{Cay}(\Gamma, S)$. Thus the result follows from Theorem~\ref{EigNorColCayMix}.
\end{proof}

Next two corollaries are special cases of Lemma~\ref{imcgoa3}.

\begin{corollary}\cite{klotz2010integral}\label{simpleAbelianEigBabai}
Let $\Gamma$ be an abelian group and $\text{Irr}(\Gamma)=\{\psi_{\alpha} : \alpha \in \Gamma \}$. Then the spectrum of the Cayley graph $\text{Cay}(\Gamma, S)$ is $\{ \lambda_\alpha : \alpha \in \Gamma \}$, where $\lambda_\alpha=\lambda_{-\alpha}$ and $$\lambda_{\alpha}=\sum_{s\in S} \psi_{\alpha}(s) \mbox{ for all } \alpha \in \Gamma.$$
\end{corollary}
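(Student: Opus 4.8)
The plan is to obtain the statement as a direct specialization of Lemma~\ref{imcgoa3}. First I would note that since $\text{Cay}(\Gamma,S)$ is a (simple) Cayley graph, the connection set $S$ is symmetric, i.e.\ $S=S^{-1}$, so by the very definition $\overline{S}=\{u\in S:u^{-1}\notin S\}$ we get $\overline{S}=\emptyset$ and hence $S\setminus\overline{S}=S$. Moreover, for a simple graph $G$ one has $\mathcal{A}(G)=\mathcal{H}(G)$, so the spectrum of $\text{Cay}(\Gamma,S)$ coincides with its HS-spectrum and Lemma~\ref{imcgoa3} applies.

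Plugging $\overline{S}=\emptyset$ into Lemma~\ref{imcgoa3} turns $\mu_{\alpha}=\sum_{s\in\overline{S}}\bigl(\omega_6\psi_{\alpha}(s)+\omega_6^5\psi_{\alpha}(-s)\bigr)$ into an empty sum, so $\mu_{\alpha}=0$ for every $\alpha\in\Gamma$. Consequently $\gamma_{\alpha}=\lambda_{\alpha}+\mu_{\alpha}=\lambda_{\alpha}=\sum_{s\in S\setminus\overline{S}}\psi_{\alpha}(s)=\sum_{s\in S}\psi_{\alpha}(s)$, which is exactly the asserted formula for the eigenvalues indexed by $\alpha\in\Gamma$.

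It remains to verify $\lambda_{\alpha}=\lambda_{-\alpha}$. From the explicit formula~(\ref{character}) we have $\psi_{-\alpha}(s)=\prod_{j=1}^{k}\omega_{n_j}^{-\alpha_j s_j}=\psi_{\alpha}(-s)$ (equivalently, since $\psi_{\alpha}(s)$ is a root of unity by Lemma~\ref{Basic}(ii), $\psi_{-\alpha}(s)=\psi_{\alpha}(s)^{-1}=\overline{\psi_{\alpha}(s)}=\psi_{\alpha}(-s)$). Substituting this and then re-indexing the sum through the bijection $s\mapsto -s$ of the symmetric set $S$ onto itself gives $\lambda_{-\alpha}=\sum_{s\in S}\psi_{\alpha}(-s)=\sum_{s\in S}\psi_{\alpha}(s)=\lambda_{\alpha}$. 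There is no genuine obstacle here; the only points needing a little care are the observation that symmetry of $S$ forces $\overline{S}=\emptyset$ and the re-indexing step in the last identity, while everything else is immediate from the cited lemmas.
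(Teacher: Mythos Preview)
Your proposal is correct and follows exactly the approach the paper intends: the corollary is stated immediately after the sentence ``Next two corollaries are special cases of Lemma~\ref{imcgoa3},'' and your argument simply spells out that specialization (symmetric $S$ forces $\overline{S}=\emptyset$, hence $\mu_\alpha=0$), together with the routine verification of $\lambda_\alpha=\lambda_{-\alpha}$ via the symmetry of $S$.
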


\begin{corollary}\label{OriEig}
Let $\Gamma$ be an abelian group and $\text{Irr}(\Gamma)=\{\psi_{\alpha} : \alpha \in \Gamma \}$. Then the spectrum of the oriented Cayley graph $\text{Cay}(\Gamma, S)$ is $\{ \mu_\alpha : \alpha \in \Gamma \}$, where $$\mu_{\alpha}=\sum_{s\in S}\left(\omega_6 \psi_{\alpha}(s)+ \omega_6^5 \psi_{\alpha}(-s)\right) \mbox{ for all } \alpha \in \Gamma.$$
\end{corollary}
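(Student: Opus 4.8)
The plan is to obtain Corollary~\ref{OriEig} as the skew-symmetric specialization of Lemma~\ref{imcgoa3}. First I would record the defining feature of an oriented Cayley graph: $\text{Cay}(\Gamma,S)$ is oriented precisely when $S$ is skew-symmetric, i.e.\ $-s\notin S$ for every $s\in S$. Under this hypothesis the set $\overline{S}=\{u\in S:-u\notin S\}$ equals $S$, and hence $S\setminus\overline{S}=\emptyset$.

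Next I would substitute these two facts into the conclusion of Lemma~\ref{imcgoa3}. That lemma gives the HS-spectrum of $\text{Cay}(\Gamma,S)$ as $\{\gamma_\alpha:\alpha\in\Gamma\}$ with $\gamma_\alpha=\lambda_\alpha+\mu_\alpha$; since the summation defining $\lambda_\alpha=\sum_{s\in S\setminus\overline{S}}\psi_\alpha(s)$ now ranges over the empty set, $\lambda_\alpha=0$ for all $\alpha$, while $\mu_\alpha=\sum_{s\in\overline{S}}\bigl(\omega_6\psi_\alpha(s)+\omega_6^5\psi_\alpha(-s)\bigr)=\sum_{s\in S}\bigl(\omega_6\psi_\alpha(s)+\omega_6^5\psi_\alpha(-s)\bigr)$. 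This is exactly the asserted formula, so $\{\mu_\alpha:\alpha\in\Gamma\}$ is the HS-spectrum of the oriented Cayley graph.

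For completeness, I would also indicate the self-contained route via Theorem~\ref{EigNorColCayMix}: when $S$ is skew-symmetric the connection function from the proof of Lemma~\ref{imcgoa3} collapses to $f_S(s)=\omega_6$ for $s\in S$, $f_S(s)=\omega_6^5$ for $s\in S^{-1}$, and $f_S(s)=0$ otherwise, the three cases being pairwise disjoint because $S\cap S^{-1}=\emptyset$; this $f_S$ reproduces $\mathcal{H}(\text{Cay}(\Gamma,S))$ since every arc is directed (an arc $(x,y)$ with $x^{-1}y\in S$ has no reverse, so its Hermitian weight is $\tfrac{1+i\sqrt3}{2}=\omega_6$, and $\tfrac{1-i\sqrt3}{2}=\omega_6^5$ for the opposite orientation). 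Theorem~\ref{EigNorColCayMix} then yields $\gamma_\alpha=\sum_{y\in\Gamma}f_S(y)\psi_\alpha(y)=\omega_6\sum_{s\in S}\psi_\alpha(s)+\omega_6^5\sum_{s\in S^{-1}}\psi_\alpha(s)$, and re-indexing the last sum by $s\mapsto s^{-1}$ (that is, $s\mapsto -s$) turns it into $\omega_6^5\sum_{s\in S}\psi_\alpha(-s)$, giving $\gamma_\alpha=\mu_\alpha$.

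There is no real difficulty here; the statement is a direct corollary. The only item deserving a line of verification is the bookkeeping in the last parenthetical --- that skew-symmetry of $S$ forces every arc of $\text{Cay}(\Gamma,S)$ to be directed with the stated Hermitian weights --- which is what licenses the reduction to the already-proven Lemma~\ref{imcgoa3}.
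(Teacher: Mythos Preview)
Your proposal is correct and matches the paper's own treatment: the paper states this corollary without proof, simply noting that it is a special case of Lemma~\ref{imcgoa3}, which is exactly your first argument (skew-symmetric $S$ forces $\overline{S}=S$ and $S\setminus\overline{S}=\emptyset$, so $\lambda_\alpha=0$ and $\gamma_\alpha=\mu_\alpha$). Your alternative self-contained route via Theorem~\ref{EigNorColCayMix} is also fine but unnecessary here.
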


Let $n\geq 2$ be a fixed positive integer.  Define $G_n(d)=\{k: 1\leq k\leq n-1, \gcd(k,n)=d \}$. It is clear that $G_n(d)=dG_{\frac{n}{d}}(1)$. Alperin and Peterson \cite{alperin2012integral} considered a Boolean algebra generated by a class of subgroups of a group in order to determine the integrality of Cayley graphs over abelian groups. Suppose $\Gamma$ is a finite group, and $\mathcal{F}_{\Gamma}$ is the family of all subgroups of $\Gamma$. The Boolean algebra $\mathbb{B}(\Gamma)$ generated by $\mathcal{F}_{\Gamma}$ is the set whose elements are obtained by arbitrary finite intersections, unions, and complements of the elements in the family $\mathcal{F}_{\Gamma}$. The minimal non-empty elements of this algebra are called \textit{atoms}. Thus each element of $\mathbb{B}(\Gamma)$ is the union of some atoms.
Consider the equivalence relation $\sim$ on $\Gamma$ such that $x\sim y$ if and only if $y=x^k$ for some $k\in G_m(1)$, where $m=\text{ord}(x)$.

\begin{lema}\cite{alperin2012integral}  The equivalence classes of $\sim$ are the atoms of $\mathbb{B}(\Gamma)$.
\end{lema}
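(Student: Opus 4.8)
The plan is to identify the equivalence class of $x$ under $\sim$ with the set of generators of the cyclic subgroup $\langle x\rangle$, and then to verify two things: every such generator-set belongs to $\mathbb{B}(\Gamma)$, and every element of $\mathbb{B}(\Gamma)$ is a union of $\sim$-classes. Granting these, the description of the atoms is a short formal consequence.

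First I would check that $\sim$ is an equivalence relation. With $m=\mathrm{ord}(x)$, reflexivity holds since $1\in G_m(1)$, symmetry since $G_m(1)$ is closed under inversion modulo $m$, and transitivity since $G_m(1)$ is closed under multiplication modulo $m$; the identity element is handled by the convention that its class is the trivial subgroup. The key elementary observation is that, for $y\in\Gamma$, there is a $k$ with $y=x^k$ and $\gcd(k,m)=1$ if and only if $y\in\langle x\rangle$ and $\mathrm{ord}(y)=m$, i.e. if and only if $y$ generates $\langle x\rangle$. Hence the $\sim$-class of $x$ is exactly the generator-set of $\langle x\rangle$, and the $\sim$-classes partition $\Gamma$ into the generator-sets of its (finitely many) distinct cyclic subgroups.

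Next I would show each class lies in $\mathbb{B}(\Gamma)$. Since $\langle x\rangle$ is a finite cyclic group it has only finitely many maximal proper subgroups $D_1,\dots,D_r$, and an element of $\langle x\rangle$ generates it exactly when it avoids all of them; therefore the $\sim$-class of $x$ equals
\[
\langle x\rangle\setminus\bigl(D_1\cup\cdots\cup D_r\bigr),
\]
a finite Boolean combination of subgroups of $\Gamma$, hence a member of $\mathbb{B}(\Gamma)$. For the converse direction I would observe that if $H\le\Gamma$ and $x\in H$ then $\langle x\rangle\subseteq H$, so the entire $\sim$-class of $x$ lies in $H$; thus every subgroup, and then (since the family of subsets of $\Gamma$ that are unions of $\sim$-classes is closed under union, intersection and complement) every element of $\mathbb{B}(\Gamma)$, is a union of $\sim$-classes. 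Combining these: a minimal nonempty element of $\mathbb{B}(\Gamma)$ is a nonempty union of $\sim$-classes and cannot properly contain the class of any of its elements (which already belongs to $\mathbb{B}(\Gamma)$), so it is a single class; conversely every $\sim$-class belongs to $\mathbb{B}(\Gamma)$ and contains no strictly smaller nonempty member of $\mathbb{B}(\Gamma)$, since any such member would again be a union of $\sim$-classes sitting inside one class. Hence the atoms are precisely the $\sim$-classes.

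I do not anticipate a serious obstacle; the points that require a little care are the translation of the arithmetic condition ``$y=x^k$ with $\gcd(k,\mathrm{ord}(x))=1$'' into ``$y$ generates $\langle x\rangle$'', the degenerate case of the identity element (where $G_1(1)=\emptyset$, dealt with by declaring its $\sim$-class to be the trivial subgroup, which lies in $\mathbb{B}(\Gamma)$ automatically), and the routine fact that the unions of blocks of a fixed partition of $\Gamma$ form a Boolean subalgebra of the power set of $\Gamma$.
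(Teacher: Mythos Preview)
Your argument is correct: identifying the $\sim$-class of $x$ with the set of generators of $\langle x\rangle$, expressing that set as $\langle x\rangle\setminus(D_1\cup\cdots\cup D_r)$ to get membership in $\mathbb{B}(\Gamma)$, and then observing that every subgroup (hence every Boolean combination of subgroups) is a union of $\sim$-classes, together force the atoms to coincide with the classes. The only delicate point is the identity element, which you have flagged and handled.

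There is nothing to compare against here: the paper does not supply its own proof of this lemma but simply quotes it from Alperin and Peterson \cite{alperin2012integral}. Your write-up is a clean self-contained justification of the cited fact.
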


For $x\in \Gamma$, let $[x]$ denote the equivalence class of $x$ with respect to the relation $\sim$. Also, let $\langle x \rangle$ denote the cyclic group generated by $x$.

\begin{lema}\label{atomsboolean} \cite{alperin2012integral}  The atoms of the Boolean algebra $\mathbb{B}(\Gamma)$ are the sets $[x]=\{ y: \langle y \rangle = \langle x \rangle \}$.
\end{lema}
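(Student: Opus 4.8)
The plan is to reduce Lemma~\ref{atomsboolean} to the preceding lemma (the equivalence classes of $\sim$ are the atoms of $\mathbb{B}(\Gamma)$) together with the purely group-theoretic identity
$$[x]=\{\,y\in\Gamma:\langle y\rangle=\langle x\rangle\,\}\qquad\text{for all }x\in\Gamma ,$$
where $[x]$ is, as in the text, the $\sim$-class of $x$; write $A_x$ for the right-hand side. Once this identity is established, the preceding lemma gives at once that the atoms of $\mathbb{B}(\Gamma)$ are exactly the sets $A_x$, which is the assertion.

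To prove the identity, fix $x$ and put $m=\text{ord}(x)$. If $y\in[x]$, then $y=x^{k}$ for some $k\in G_m(1)$, so $\gcd(k,m)=1$ and hence $\text{ord}(y)=\text{ord}(x^{k})=m/\gcd(k,m)=m$; since $\langle y\rangle\subseteq\langle x\rangle$ and both groups have order $m$, they coincide, so $y\in A_x$. Conversely, if $\langle y\rangle=\langle x\rangle$, then $y\in\langle x\rangle$, so $y=x^{k}$ for a unique $k$ with $0\le k\le m-1$, and $\text{ord}(y)=m$ forces $\gcd(k,m)=1$; when $m\ge 2$ this means $k\in G_m(1)$, so $y\in[x]$. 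The only case excluded is $m=1$, that is $x$ the identity $0$, where $[0]=\{0\}=A_0$ by the convention that the identity forms its own $\sim$-class (note $G_1(1)$ is not even defined).

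If one prefers to bypass the preceding lemma, the sets $A_x$ can be shown to be the atoms directly. First, $A_x\in\mathbb{B}(\Gamma)$ because $A_x=\langle x\rangle\setminus\bigcup_{d\mid m,\,d>1}\langle x^{d}\rangle$, a finite Boolean combination of subgroups of $\Gamma$ (the proper subgroups of the cyclic group $\langle x\rangle$ are precisely the $\langle x^{d}\rangle$ with $d\mid m$ and $d>1$). Second, the $A_x$ partition $\Gamma$, since each $y$ lies in $A_y$ and in no other $A_x$. Third, every subgroup $H\le\Gamma$ is the disjoint union $H=\bigsqcup\{A_x:\langle x\rangle\le H\}$, so every generator of $\mathbb{B}(\Gamma)$ is a union of $A_x$'s; since the collection of subsets of $\Gamma$ that are unions of $A_x$'s is closed under complement and finite union, it contains all of $\mathbb{B}(\Gamma)$, and therefore a nonempty element of $\mathbb{B}(\Gamma)$ is minimal exactly when it is a single $A_x$.

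I do not expect a real obstacle: the argument is elementary cyclic-group arithmetic. The two points needing care are (i) the identity element, for which $m=1$ and one must simply stipulate $[0]=\{0\}$; and (ii) in the direct route, checking that ``being a union of $A_x$'s'' is preserved by all the Boolean operations, which is exactly where the partition property of the $A_x$ is used.
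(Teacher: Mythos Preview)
Your argument is correct. Note, however, that the paper does not actually prove Lemma~\ref{atomsboolean}: it merely quotes the statement from \cite{alperin2012integral}, just as it does for the preceding lemma. So there is no ``paper's own proof'' to compare against. Your reduction to the preceding lemma via the identity $[x]=\{y:\langle y\rangle=\langle x\rangle\}$ is exactly the right way to see why the two cited lemmas are equivalent reformulations of each other, and your alternative direct argument (writing $A_x$ as a Boolean combination of subgroups and then using that subgroups are unions of $A_x$'s) is a clean self-contained proof that does not rely on the earlier citation at all.
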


By Lemma \ref{atomsboolean}, each element of $\mathbb{B}(\Gamma)$ is a union of some sets of the form $[x]=\{ y: \langle y \rangle = \langle x \rangle \}$. Thus, for all $S\in \mathbb{B}(\Gamma)$, we have $S=[x_1]\cup...\cup [x_k]$ for some $x_1,...,x_k\in \Gamma$. The next result provides a complete characterization of integral Cayley graphs over an abelian group $\Gamma$ in terms of the atoms of $\mathbb{B}(\Gamma)$. 

\begin{theorem}\label{Cayint} (\cite{alperin2012integral}, \cite{bridges1982rational})
Let $\Gamma$ be an abelian group. The Cayley graph $\text{Cay}(\Gamma, S)$ is integral if and only if $S\in \mathbb{B}(\Gamma)$.
\end{theorem}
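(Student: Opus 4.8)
The plan is to combine the eigenvalue formula $\lambda_\alpha=\sum_{s\in S}\psi_\alpha(s)$ of Corollary~\ref{simpleAbelianEigBabai} with the description of the atoms of $\mathbb{B}(\Gamma)$ in Lemma~\ref{atomsboolean}, using integrality of Ramanujan sums for sufficiency and cyclotomic Galois theory for necessity. For sufficiency, assume $S\in\mathbb{B}(\Gamma)$, so $S=[x_1]\cup\cdots\cup[x_r]$ is a disjoint union of atoms and $\lambda_\alpha=\sum_{i=1}^{r}\sum_{y\in[x_i]}\psi_\alpha(y)$; it suffices to show each inner sum is an integer. Fix $x\in\Gamma$ with $\mathrm{ord}(x)=d$ and put $H=\langle x\rangle$; then $[x]$ is exactly the set of generators of the cyclic group $H\cong\mathbb{Z}_d$, that is, $[x]=\{kx:1\le k\le d,\ \gcd(k,d)=1\}$. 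Since $\psi_\alpha$ is a homomorphism and $\psi_\alpha(x)^d=\psi_\alpha(dx)=1$, we may write $\psi_\alpha(x)=\omega_d^{\,j}$ for some $j$, and then $\sum_{y\in[x]}\psi_\alpha(y)=\sum_{\gcd(k,d)=1}\psi_\alpha(x)^k=\sum_{\gcd(k,d)=1}\omega_d^{jk}$ is the Ramanujan sum $c_d(j)$, a rational integer (for instance, by von Sterneck's formula $c_d(j)=\mu(d/e)\,\varphi(d)/\varphi(d/e)$ with $e=\gcd(j,d)$). Hence every $\lambda_\alpha\in\mathbb{Z}$, so $\text{Cay}(\Gamma,S)$ is integral.

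For necessity, assume all $\lambda_\alpha$ are integers and let $N=\exp(\Gamma)$, so each value $\psi_\alpha(s)$ is an $N$-th root of unity and $\lambda_\alpha\in\mathbb{Q}(\omega_N)$. For $t$ coprime to $N$, let $\sigma_t\in\mathrm{Gal}(\mathbb{Q}(\omega_N)/\mathbb{Q})$ be the automorphism with $\sigma_t(\omega_N)=\omega_N^{\,t}$; since each $\omega_{n_j}$ is a power of $\omega_N$, formula~\eqref{character} gives $\sigma_t(\psi_\alpha(s))=\psi_\alpha(s)^t=\psi_\alpha(ts)$. Note also that $s\mapsto ts$ is an automorphism of $\Gamma$ (its kernel is trivial because $\gcd(t,N)=1$), so $tS$ is a subset of $\Gamma$ of size $|S|$ with $0\notin tS$. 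Applying $\sigma_t$ to $\lambda_\alpha=\sum_{s\in S}\psi_\alpha(s)$ and using $\lambda_\alpha\in\mathbb{Z}$ yields, for every $\alpha\in\Gamma$,
\[\sum_{s\in S}\psi_\alpha(s)=\lambda_\alpha=\sigma_t(\lambda_\alpha)=\sum_{s\in S}\psi_\alpha(ts)=\sum_{s'\in tS}\psi_\alpha(s').\]
By injectivity of the discrete Fourier transform on $\Gamma$ (a function $g:\Gamma\to\mathbb{C}$ is determined by the tuple $(\sum_{y\in\Gamma}g(y)\psi_\alpha(y))_{\alpha\in\Gamma}$), this forces $\mathbf{1}_S=\mathbf{1}_{tS}$, i.e.\ $tS=S$, for every $t$ coprime to $N$.

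It remains to convert ``$tS=S$ for all $t$ coprime to $N$'' into ``$S\in\mathbb{B}(\Gamma)$''. Let $s\in S$ with $\mathrm{ord}(s)=d$; then $d\mid N$, and $\{ts:\gcd(t,N)=1\}=\{ks:\gcd(k,d)=1\}$, because every $k$ coprime to $d$ lifts, by the Chinese Remainder Theorem, to some $t$ coprime to $N$ with $t\equiv k\Mod d$, while every $t$ coprime to $N$ is coprime to $d$. This set is precisely the set of generators of $\langle s\rangle$, hence the atom $[s]$ by Lemma~\ref{atomsboolean}. So $tS=S$ for all such $t$ says exactly that $S$ is a union of atoms of $\mathbb{B}(\Gamma)$, i.e.\ $S\in\mathbb{B}(\Gamma)$. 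I expect the main difficulty to lie in the necessity direction---specifically, the step passing from equality of all the Galois-twisted eigenvalue sums to the identity $tS=S$---since it relies on the Fourier-inversion/injectivity statement on $\Gamma$, and one must take care to run the Galois argument over $\mathbb{Q}(\omega_N)$ with $N=\exp(\Gamma)$ so that $\sigma_t$ acts on each $\psi_\alpha(s)$ precisely by the substitution $s\mapsto ts$.
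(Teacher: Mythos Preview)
The paper does not prove Theorem~\ref{Cayint}; it is quoted from \cite{alperin2012integral} and \cite{bridges1982rational} and used as a black box throughout. So there is no proof in the paper against which to compare yours.

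Your argument is correct and is essentially the standard one found in those references: sufficiency via the integrality of Ramanujan sums, and necessity via the Galois action $\sigma_t\in\mathrm{Gal}(\mathbb{Q}(\omega_N)/\mathbb{Q})$ on the character values, followed by Fourier inversion to deduce $tS=S$ for all $t$ coprime to $N$, hence that $S$ is a union of atoms $[s]$. The Fourier-inversion step is exactly the invertibility of the character matrix $E$ that the paper records in Section~\ref{sec4} via $EE^*=nI_n$. Your CRT lift from $\gcd(k,d)=1$ to $\gcd(t,N)=1$ with $t\equiv k\Mod d$ is fine once one handles separately the primes dividing $N$ but not $d$; this is the only place where a careless reader might stumble, so it is good that you flagged it.
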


Define $\Gamma(3)$ to be the set of all $x\in \Gamma$ satisfying $\text{ord}(x)\equiv 0 \pmod 3$. For all $x\in \Gamma(3)$ and $r\in \{0,1,2\}$, define $$M_r(x):=\{x^k: 1\leq k \leq \text{ord}(x) , k \equiv r \Mod 3  \}.$$
 For all $a\in \Gamma$ and $S\subseteq \Gamma$, define $a+S:= \{ a+s: s\in S\}$ and $-S:=\{ -s: s\in S \}$. Note that $-s$ denotes the inverse of $s$, that is $-s=s^{m-1}$, where $m=\text{ord}(s)$.

\begin{lema} Let $\Gamma$ be an abelian group and $x\in \Gamma(3)$. Then the following statement\textbf{}s are true.
\begin{enumerate}[label=(\roman*)]
\item $\bigcup\limits_{r=0}^{2}M_r(x)= \langle x \rangle$.
\item Both $M_1(x)$ and $M_2(x)$ are skew-symmetric subsets of $\Gamma$.
\item $-M_1(x)=M_2(x)$ and $-M_2(x)=M_1(x)$.
\item $a+M_1(x)=M_1(x)$ and $a+M_2(x)=M_2(x)$ for all $a\in M_0(x)$.
\end{enumerate} 
\end{lema}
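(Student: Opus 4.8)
The plan is to unwind the definitions and reduce everything to elementary facts about the cyclic group $\langle x\rangle$. Write $m=\text{ord}(x)$, so that $m\equiv 0\Mod 3$ by hypothesis, and identify $\langle x\rangle$ with $\mathbb{Z}_m$ via $x^k\mapsto k$. Under this identification $M_r(x)$ becomes $\{k:1\le k\le m,\ k\equiv r\Mod 3\}$, and since $m$ is a multiple of $3$ the residue class of $k$ modulo $3$ is the same whether we represent $k$ by an integer in $\{1,\dots,m\}$ or by its value in $\mathbb{Z}_m$; this consistency is exactly why the hypothesis $3\mid m$ is needed, and I expect this bookkeeping point to be the only real subtlety in the whole proof.

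For (i): every element of $\langle x\rangle$ equals $x^k$ for a unique $k$ with $1\le k\le m$ (taking $k=m$ for the identity), and $k$ lies in exactly one of the three residue classes mod $3$, so $M_0(x)\cup M_1(x)\cup M_2(x)=\langle x\rangle$ and in fact the union is disjoint. For (ii): an element $x^k\in M_1(x)$ has inverse $x^{m-k}$, and $m-k\equiv -k\equiv 2\Mod 3$ when $k\equiv 1$, so $x^{m-k}\notin M_1(x)$; the same computation shows $M_2(x)$ is skew-symmetric, which also gives (iii), namely $-M_1(x)=\{x^{m-k}:k\equiv 1\}=\{x^{j}:j\equiv 2\}=M_2(x)$ and symmetrically $-M_2(x)=M_1(x)$. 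For (iv): if $a\in M_0(x)$ then $a=x^{3t}$ for some $t$, and $a+x^k=x^{3t+k}$ has exponent $\equiv k\Mod 3$, so left-translation by $a$ permutes each $M_r(x)$; since translation is a bijection on the finite set $\langle x\rangle$ and preserves the (disjoint) residue-class decomposition from (i), it restricts to a bijection of $M_1(x)$ onto itself and of $M_2(x)$ onto itself, giving $a+M_1(x)=M_1(x)$ and $a+M_2(x)=M_2(x)$.

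The main obstacle, such as it is, is purely notational: making sure the ``$1\le k\le\text{ord}(x)$, $k\equiv r\Mod 3$'' description of $M_r(x)$ is well-defined independently of the choice of representative exponent, which hinges on $3\mid\text{ord}(x)$. Once that is pinned down, each of (i)--(iv) is a one-line computation in $\mathbb{Z}_m$, and (iii) and (iv) follow formally from (i) and (ii) together with the fact that addition and negation are bijections on a finite abelian group. I would present (i) first, derive (ii) by direct computation, then deduce (iii) and (iv) as consequences.
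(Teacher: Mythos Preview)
Your proposal is correct and follows essentially the same approach as the paper: both arguments reduce everything to the observation that, since $3\mid m$, the residue of the exponent $k$ modulo $3$ is well-defined on $\langle x\rangle$, and then verify each item by the obvious one-line computation with exponents (e.g.\ $-x^k=x^{m-k}$ with $m-k\equiv 2\Mod 3$ when $k\equiv 1$, and $x^{k_1}+x^{k_2}=x^{k_1+k_2}$). The only cosmetic difference is that for (iv) the paper shows $a+M_1(x)\subseteq M_1(x)$ and infers equality, whereas you phrase it as translation being a bijection preserving the residue-class decomposition; these are the same argument.
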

\begin{proof}
\begin{enumerate}[label=(\roman*)]
	\item It follows from the definitions of $M_r(x)$ and $\langle x \rangle$.
	\item Let $\text{ord}(x)=m$. If $x^k\in M_1(x)$ then $-x^k=x^{m-k} \not\in M_1(x)$, as $k\equiv 1 \pmod 3$ gives $m-k\equiv 2 \pmod 3$. Thus  $M_1(x)$ is a skew-symmetric subset of $\Gamma$. Similarly,  $M_2(x)$ is also a skew-symmetric subset of $\Gamma$.
	\item Let $\text{ord}(x)=m$. As $k\equiv 1 \pmod 3$ if and only if $m-k\equiv 2 \pmod 3$, and $-x^k=x^{m-k}$, we get $-M_1(x)=M_2(x)$ and $-M_2(x)=M_1(x)$.
	\item Let $a\in M_0(x)$ and  $y\in a+M_1(x)$. Then $a=x^{k_1}$ and $y=x^{k_1}+x^{k_2}=x^{k_1+k_2}$, where $k_1\equiv 0 \pmod 3$ and $k_2 \equiv 1 \pmod 3$. Since $k_1+k_2\equiv 1 \pmod 3$, we have $y\in M_1(x)$ implying that $a+M_1(x)\subseteq M_1(x)$. Hence $a+M_1(x)=M_1(x)$. Similarly, $a+M_2(x)=M_2(x)$ for all  $a\in M_0(x)$.\qedhere
\end{enumerate} 	
\end{proof}

Let $m \equiv 0 \Mod 3$. For $r\in \{1,2\}$ and $g \in \mathbb{Z}$, define the following:
\begin{align*}
&  G_{m,3}^r(1)=\{ k: 1\leq k \leq m-1 , \gcd(k,m )= 1,k\equiv r \Mod 3 \},\\
&  D_{g,3}= \{ k: k \text{ divides } g, k \not\equiv 0 \Mod 3\},~\text{ and}\\
&  D_{g,3}^r=\{ k: k \text{ divides } g, k \equiv r \Mod 3\} .
\end{align*}
It is clear that $D_{g,3}=D_{g,3}^1 \bigcup\limits D_{g,3}^2$. Define an equivalence relation $\approx$ on $\Gamma(3)$ such that $x \approx y$ if and only if $y=x^k$ for some  $k\in G_{m,3}^1(1)$, where $m=\text{ord}(x)$. Observe that if  $x,y\in \Gamma(3)$ and $x \approx y$ then $x \sim y$, but the converse need not be true. For example, consider $x=5\pmod {12}$, $y=7\pmod {12}$ in $\mathbb{Z}_{12}$. Here $x,y\in \mathbb{Z}_{12}(3)$ and $x \sim y$ but $x \not\approx y$. For $x\in \Gamma(3)$, let $\langle\!\langle x \rangle\!\rangle$ denote the equivalence class of $x$ with respect to the relation $\approx$.

\begin{lema}\label{lemanecc} Let $\Gamma$ be an abelian group, $x\in \Gamma(3)$ and $m=\text{ord}(x)$. Then the following are true.
\begin{enumerate}[label=(\roman*)]
\item $\langle\!\langle x \rangle\!\rangle= \{ x^k:  k \in G_{m,3}^1(1)  \}$.
\item $\langle\!\langle -x \rangle\!\rangle= \{ x^k: k \in G_{m,3}^2(1)  \}$.
\item $\langle\!\langle x \rangle\!\rangle \cap \langle\!\langle -x \rangle\!\rangle=\emptyset$.
\item $[x]=\langle\!\langle x \rangle\!\rangle \cup \langle\!\langle -x \rangle\!\rangle$.
\end{enumerate} 
\end{lema}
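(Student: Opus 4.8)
The plan is to reduce all four parts to elementary bookkeeping about exponents modulo $m$ and modulo $3$, every step of which uses the hypothesis $3\mid m$. First I would record the facts I will use repeatedly: (a) if $\gcd(k,m)=1$ then $k\not\equiv 0\pmod 3$ (because $3\mid m$), so that $G_m(1)=G_{m,3}^1(1)\sqcup G_{m,3}^2(1)$; (b) reduction modulo $m$ preserves residues modulo $3$ (again because $3\mid m$), so the residue mod $3$ of an exponent is well defined on $\mathbb{Z}/m\mathbb{Z}$; (c) if $kk'\equiv 1\pmod m$ then $kk'\equiv 1\pmod 3$, hence $k'\equiv k^{-1}\pmod 3$; (d) $k\mapsto x^k$ is a bijection from $\{0,1,\dots,m-1\}$ onto $\langle x\rangle$, since $\mathrm{ord}(x)=m$; (e) $-x=x^{m-1}$ and $m-1\equiv 2\pmod 3$, and $\gcd(m-1,m)=1$ so $\mathrm{ord}(-x)=m$. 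Using (a)--(c) one verifies that $\approx$ is genuinely an equivalence relation on $\Gamma(3)$: reflexivity is $1\in G_{m,3}^1(1)$; symmetry follows by inverting the exponent modulo $m$ and invoking (c); transitivity follows by multiplying exponents and reducing modulo $m$, using (b); and since $\gcd(k,m)=1$ forces $\mathrm{ord}(x^k)=m$, the integer $m$ appearing in the definition is unambiguous along an $\approx$-class.

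For (i) I would simply unwind the definition together with the symmetry of $\approx$: $y\in\langle\!\langle x\rangle\!\rangle$ iff $y\approx x$ iff $x\approx y$ iff $y=x^{k}$ for some $k\in G_{m,3}^1(1)$, and by (d) distinct such $k$ yield distinct group elements, so $\langle\!\langle x\rangle\!\rangle=\{x^k:k\in G_{m,3}^1(1)\}$. For (ii), apply (i) with $x$ replaced by $-x$ (legitimate by (e), since $\mathrm{ord}(-x)=m$) to get $\langle\!\langle -x\rangle\!\rangle=\{(-x)^l:l\in G_{m,3}^1(1)\}$, and then rewrite $(-x)^l=x^{(m-1)l}=x^{-l}$. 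As $l$ ranges over $G_{m,3}^1(1)$, the reduced exponent $(-l\bmod m)$ ranges exactly over $G_{m,3}^2(1)$: coprimality with $m$ is preserved, $l\equiv 1\pmod 3$ gives $-l\equiv 2\pmod 3$ by (b), and $k\mapsto m-k$ is the inverse bijection between $G_{m,3}^1(1)$ and $G_{m,3}^2(1)$. Hence $\langle\!\langle -x\rangle\!\rangle=\{x^k:k\in G_{m,3}^2(1)\}$.

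Part (iii) is then immediate from (i) and (ii): $G_{m,3}^1(1)\cap G_{m,3}^2(1)=\emptyset$ since no integer is both $\equiv 1$ and $\equiv 2\pmod 3$, and $k\mapsto x^k$ is injective on $\{1,\dots,m-1\}$ by (d), so the two sets are disjoint. (Alternatively: $\langle\!\langle x\rangle\!\rangle$ and $\langle\!\langle -x\rangle\!\rangle$ are $\approx$-classes, hence equal or disjoint, and they cannot be equal because $x\approx -x$ would force $x^k=x^{m-1}$ with $k\equiv 1$ and $m-1\equiv 2\pmod 3$, contradicting (d).) For (iv), recall that the $\sim$-classes are precisely the atoms $[x]=\{y:\langle y\rangle=\langle x\rangle\}$ of $\mathbb{B}(\Gamma)$ (Lemma~\ref{atomsboolean}); unwinding the definition of $\sim$ as in (i) gives $[x]=\{x^k:k\in G_m(1)\}$, the set of generators of $\langle x\rangle$. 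By (a), $G_m(1)=G_{m,3}^1(1)\sqcup G_{m,3}^2(1)$, so $[x]=\{x^k:k\in G_{m,3}^1(1)\}\cup\{x^k:k\in G_{m,3}^2(1)\}=\langle\!\langle x\rangle\!\rangle\cup\langle\!\langle -x\rangle\!\rangle$ by (i) and (ii). No step is genuinely hard; the only place that needs care is the residue bookkeeping in (a)--(c) and the exponent-inversion step in (ii), which is exactly where the hypothesis $3\mid m$ is essential.
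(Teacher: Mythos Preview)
Your proposal is correct and follows essentially the same approach as the paper: unwind the definition of $\approx$ for (i), apply (i) to $-x$ and rewrite exponents via $k\mapsto m-k$ for (ii), and deduce (iii) and (iv) from the disjoint decomposition $G_m(1)=G_{m,3}^1(1)\sqcup G_{m,3}^2(1)$. Your version is more explicit about the residue bookkeeping (your facts (a)--(e)) and about why $\approx$ is an equivalence relation, but the logical skeleton of each part is the same as the paper's.
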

\begin{proof}
\begin{enumerate}[label=(\roman*)]
\item Let $y\in \langle\!\langle x \rangle\!\rangle$. Then $x \approx y$, and so $\text{ord}(x)=\text{ord}(y)=m$ and there exists $k\in G_{m,3}^1(x)$ such that $y=x^k$. Thus $\langle\!\langle x \rangle\!\rangle\subseteq \{ x^k:  k \in G_{m,3}^1(1)  \}$. On the other hand, let $z=x^k$ for some $k\in G_{m,3}^1(1)$. Then $\text{ord}(x)=\text{ord}(z)$ and so $x \approx z$. Thus $ \{ x^k:  k \in G_{m,3}^1(1)  \} \subseteq \langle\!\langle x \rangle\!\rangle$.
\item Note that $-x=x^{m-1}$ and $m-1\equiv 2 \pmod 3$. By Part $(i)$, 
\begin{align*}
\langle\!\langle -x \rangle\!\rangle = \{ (-x)^k:  k \in G_{m,3}^1(1)  \} = \{ x^{(m-1)k}:  k \in G_{m,3}^1(1)  \}&= \{ x^{-k}:  k \in G_{m,3}^1(1)  \}\\
&= \{ x^{k}:  k \in G_{m,3}^2(1)  \}.
\end{align*}
\item Since $G_{m,3}^1(1)\cap G_{m,3}^2(1)=\emptyset$, so by Part $(i)$ and Part $(ii)$, $\langle\!\langle x \rangle\!\rangle \cap \langle\!\langle -x \rangle\!\rangle=\emptyset$ holds.
\item  Since $[x]= \{ x^k:  k \in G_m(1)  \}$ and $G_m(1)$ is a disjoint union of $G_{m,3}^1(1)$ and $ G_{m,3}^3(1)$, by Part $(i)$ and Part $(ii)$, $[x]=\langle\!\langle x \rangle\!\rangle \cup \langle\!\langle -x \rangle\!\rangle$ holds. \qedhere
\end{enumerate} 
\end{proof}

\begin{lema}\label{imcgoa4} Let $\Gamma$ be an abelian group, $x\in \Gamma(3)$, $m=\text{ord}(x)$ and $g=\frac{m}{3}$. Then the following are true.
\begin{enumerate}[label=(\roman*)]
\item $M_1(x) \cup M_2(x)=\bigcup\limits_{h\in D_{g,3}} [x^h] $.
\item $M_1(x)= \bigcup\limits_{h\in D_{g,3}^1} \langle\!\langle x^h \rangle\!\rangle \cup \bigcup\limits_{h\in D_{g,3}^2} \langle\!\langle -x^h \rangle\!\rangle$.
\item $M_2(x)=\bigcup\limits_{h\in D_{g,3}^1} \langle\!\langle -x^h \rangle\!\rangle \cup \bigcup\limits_{h\in D_{g,3}^2} \langle\!\langle x^h \rangle\!\rangle $.
\end{enumerate}
\end{lema}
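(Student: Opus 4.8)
The plan is to establish (i) first, by decomposing the cyclic group $\langle x\rangle$ into the atoms of $\mathbb{B}(\langle x\rangle)$, and then to deduce (ii) and (iii) from (i) by tracking residues modulo $3$ on each atom.

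For (i): for every divisor $d$ of $m$ the group $\langle x^d\rangle$ has order $m/d$, so by Lemma~\ref{atomsboolean} we get $[x^d]=\{(x^d)^j:\gcd(j,m/d)=1\}=\{x^k:1\le k\le m,\ \gcd(k,m)=d\}$; hence $\langle x\rangle=\bigcup_{d\mid m}[x^d]$, a disjoint union since distinct divisors give subgroups of distinct orders. By definition $M_1(x)\cup M_2(x)=\{x^k:1\le k\le m,\ 3\nmid k\}$. If $3\nmid k$ and $d=\gcd(k,m)$, then $d\mid k$, so $3\nmid d$; conversely, if $\gcd(k,m)=d$ with $3\nmid d$, then since $3\mid m$ a factor $3$ of $k$ would have to divide $d$, so $3\nmid k$. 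Thus $M_1(x)\cup M_2(x)=\bigcup_{d\mid m,\ 3\nmid d}[x^d]$. Finally, writing $m=3g$, the condition ``$d\mid m$ and $3\nmid d$'' is equivalent to ``$d\mid g$ and $3\nmid d$'' because $\gcd(3,d)=1$, i.e.\ to $d\in D_{g,3}$; this proves (i).

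Next I would fix $h\in D_{g,3}$ and analyze the atom $[x^h]$. Since $h\mid g\mid m$ we have $\text{ord}(x^h)=m/h=3(g/h)$, so $x^h\in\Gamma(3)$, and Lemma~\ref{lemanecc} gives the disjoint decomposition $[x^h]=\langle\!\langle x^h\rangle\!\rangle\cup\langle\!\langle -x^h\rangle\!\rangle$ with $\langle\!\langle x^h\rangle\!\rangle=\{x^{hj}:\gcd(j,m/h)=1,\ j\equiv 1\pmod 3\}$ and $\langle\!\langle -x^h\rangle\!\rangle=\{x^{hj}:\gcd(j,m/h)=1,\ j\equiv 2\pmod 3\}$. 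As $3\nmid h$, the residue of $hj$ modulo $3$ is pinned down: if $h\equiv 1\pmod 3$ then $\langle\!\langle x^h\rangle\!\rangle\subseteq M_1(x)$ and $\langle\!\langle -x^h\rangle\!\rangle\subseteq M_2(x)$, and if $h\equiv 2\pmod 3$ these two inclusions swap. Taking the union over $D_{g,3}=D_{g,3}^1\cup D_{g,3}^2$ shows that the right-hand side of (ii) is contained in $M_1(x)$ and that of (iii) in $M_2(x)$. For the reverse inclusions, note first that $M_1(x)\cap M_2(x)=\emptyset$ (if $x^{k_1}=x^{k_2}$ then $k_1\equiv k_2\pmod m$, hence mod $3$ since $3\mid m$). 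Given $y\in M_1(x)$, by (i) it lies in $\bigcup_{h\in D_{g,3}}\big(\langle\!\langle x^h\rangle\!\rangle\cup\langle\!\langle -x^h\rangle\!\rangle\big)$, a disjoint union; so $y$ belongs to exactly one piece, and that piece cannot be one of those sitting inside $M_2(x)$, which forces it to be $\langle\!\langle x^h\rangle\!\rangle$ with $h\in D_{g,3}^1$ or $\langle\!\langle -x^h\rangle\!\rangle$ with $h\in D_{g,3}^2$. This gives equality in (ii); statement (iii) then follows by the same argument with $M_1(x)$ and $M_2(x)$ interchanged, or by removing the right-hand side of (ii) from $\bigcup_{h\in D_{g,3}}[x^h]$ and invoking $M_2(x)=(M_1(x)\cup M_2(x))\setminus M_1(x)$.

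I expect the only subtle points to be (a) the equivalence $\{d:d\mid m,\ 3\nmid d\}=D_{g,3}$, which uses $\gcd(3,d)=1$ to pass from divisors of $m=3g$ to divisors of $g$, and (b) the disjointness of every union of classes $\langle\!\langle\cdot\rangle\!\rangle$ that appears, since it is precisely this disjointness that lets the residue-mod-$3$ classification of the pieces decide whether a given element sits in $M_1(x)$ or in $M_2(x)$. Everything else is routine arithmetic with congruences.
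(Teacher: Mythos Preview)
Your proof is correct and follows essentially the same strategy as the paper: classify elements by residues modulo $3$ and invoke the decomposition $[x^h]=\langle\!\langle x^h\rangle\!\rangle\cup\langle\!\langle -x^h\rangle\!\rangle$ from Lemma~\ref{lemanecc}. The organization differs slightly. For (i), the paper picks an element $x^k$ with $3\nmid k$, sets $h=\gcd(k,g)$, and checks directly that $x^k\in[x^h]$; you instead start from the global atom decomposition $\langle x\rangle=\bigcup_{d\mid m}[x^d]$ and then filter out the atoms with $3\nmid d$, translating ``$d\mid m,\ 3\nmid d$'' into ``$d\in D_{g,3}$''. For (ii), the paper verifies both inclusions by explicit element-chasing (writing $x^k=x^{hj}$ and reading off whether $j\in G_{q,3}^1(1)$ or $G_{q,3}^2(1)$), whereas you prove only the inclusion $\text{RHS}\subseteq M_1(x)$ directly and obtain the reverse inclusion by combining (i) with the disjointness $M_1(x)\cap M_2(x)=\emptyset$. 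Your disjointness route is a little cleaner and avoids the paper's implicit step ``$x^k=x^{hj}\Rightarrow k=hj$''; the paper's element-chasing is more hands-on but equally valid. Both arguments rest on the same arithmetic core.
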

\begin{proof}
\begin{enumerate}[label=(\roman*)]
\item Let $x^k \in M_1(x) \cup M_2(x)$, where $k \equiv 1 \text{ or } 2 \pmod 3$. To show that $x^k \in \bigcup\limits_{h\in D_{g,3}} [x^h]$, it is enough to show $x^k \sim x^h$ for some $h \in D_{g,3}$. Let $h=\gcd (k,g) \in D_{g,3}$. Note that 
$$\text{ord}(x^k)=\frac{m}{\gcd(m,k)}=\frac{m}{\gcd(g,k)}=\frac{m}{h}=\text{ord}(x^h).$$ 
Also, as $h=\gcd(k,m)$, we have $\langle x^k \rangle = \langle x^h \rangle$, and so $x^k=x^{hj}$ for some $j\in G_q(1)$, where $q=\text{ord}(x^h)=\frac{m}{h}$. Thus $x^k\sim x^h$ where $h=\gcd (k,g) \in D_{g,3}$. 
Conversely, let $z\in \bigcup\limits_{h\in D_{g,3}} [x^h]$. Then there exists $h\in D_{g,3}$ such that $z=x^{hj}$ where $j\in G_q(1)$ and $q= \frac{m}{\gcd(m,h)}$. Now $h\in D_{g,3}$ and $q\equiv 0\pmod 3$ imply that $hj\equiv 1 \text{ or } 2 \pmod 3$, and so $\bigcup\limits_{h\in D_{g,3}} [x^h] \subseteq M_1(x) \cup M_2(x)$. Hence $M_1(x) \cup M_2(x)=\bigcup\limits_{h\in D_{g,3}} [x^h] $.
	
\item Let $x^k \in M_1(x)$, where $k\equiv 1 \pmod 3$. By Part $(i)$, there exists $h\in D_{g,3}$  and $j\in G_q(1)$ such that $x^k=x^{hj}$, where $q=\frac{m}{\gcd(m,h)}$. Note that $k=jh$. If $h\equiv 1 \pmod 3$ then $j\in G_{q,3}^1(1)$, otherwise $j\in G_{q,3}^2(1)$. Thus using parts $(i)$ and $(ii)$ of Lemma \ref{lemanecc}, if $h\equiv 1 \pmod 3$ then $x^k \approx x^h$, otherwise $x^k \approx -x^h$. Hence $M_1(x) \subseteq \bigcup\limits_{h\in D_{g,3}^1} \langle\!\langle x^h \rangle\!\rangle \cup \bigcup\limits_{h\in D_{g,3}^2} \langle\!\langle -x^h \rangle\!\rangle$. Conversely, assume that $z\in  \bigcup\limits_{h\in D_{g,3}^1} \langle\!\langle x^h \rangle\!\rangle \cup \bigcup\limits_{h\in D_{g,3}^2} \langle\!\langle -x^h \rangle\!\rangle$. This gives $z\in \langle\!\langle x^h \rangle\!\rangle$ for an $h\in D_{g,3}^1$ or $z\in \langle\!\langle -x^h \rangle\!\rangle$ for an $h\in D_{g,3}^2$. In the first case, by part $(i)$ of Lemma \ref{lemanecc},  there exists  $j\in G_{q,3}^1(1)$ with $q=\frac{m}{\gcd(m,h)}$ such that  $z = x^{hj}$. Similarly, for the second case, by part $(ii)$ of Lemma \ref{lemanecc},  there exists  $j\in G_{q,3}^2(1)$ with $q=\frac{m}{\gcd(m,h)}$ such that  $z = x^{hj}$. In both the cases, $hj \equiv 1 \pmod 3$. Thus $z\in M_1(x)$.

\item The proof is similar to Part $(ii)$. \qedhere
\end{enumerate}
\end{proof}

The \textit{cyclotomic polynomial} $\Phi_m(x)$ is the monic polynomial whose zeros are the primitive $m^{th}$ roots of unity. That is,
	$$\Phi_m(x)= \prod_{a\in G_m(1)}(x-\omega_m^a).$$ 
	Clearly, the degree of $\Phi_m(x)$ is $\varphi(m)$, where $\varphi$ denotes the Euler $\varphi$-function. It is well known that the cyclotomic polynomial $\Phi_m(x)$ is monic and irreducible in $\mathbb{Z}[x]$.  See \cite{numbertheory} for more details on cyclotomic polynomials.

	The polynomial $\Phi_m(x)$ is irreducible over $\mathbb{Q}(\omega_3)$ if and only if $[\mathbb{Q}(\omega_3,\omega_m) : \mathbb{Q}(\omega_3)]= \varphi(m)$. Also, $ \mathbb{Q}(\omega_m)$ does not contain the number $\omega_3$ if and only if $m\not\equiv 0 \Mod 3$. Thus, if $m\not\equiv 0 \Mod 3$ then $[\mathbb{Q}(\omega_3,\omega_m):\mathbb{Q}(\omega_m) ]=2=[\mathbb{Q}(\omega_3), \mathbb{Q}]$, and therefore 
	$$[\mathbb{Q}(\omega_3,\omega_m) : \mathbb{Q}(\omega_3)]=\frac{[\mathbb{Q}(\omega_3,\omega_m) : \mathbb{Q}(\omega_m)] \times [\mathbb{Q}(\omega_m) : \mathbb{Q}]}{ [\mathbb{Q}(\omega_3) : \mathbb{Q}]}= [\mathbb{Q}(\omega_m) : \mathbb{Q}]= \varphi(m).$$
Further, if $m\equiv 0 \Mod 3$ then	$ \mathbb{Q}(\omega_3,\omega_m)= \mathbb{Q}(\omega_m)$, and so 
$$[\mathbb{Q}(\omega_3,\omega_m) : \mathbb{Q}(\omega_3)] = \frac{[\mathbb{Q}(\omega_3,\omega_m) : \mathbb{Q}]}{[\mathbb{Q}(\omega_3) : \mathbb{Q}]}=\frac{\varphi(m)}{2}.$$
Note that $\mathbb{Q}(\omega_3)=\mathbb{Q}(\omega_6)=\mathbb{Q}(i\sqrt{3})$. Therefore $\Phi_m(x)$ is irreducible over $\mathbb{Q}(\omega_3), \mathbb{Q}(\omega_6)$ or $\mathbb{Q}(i\sqrt{3})$ if and only if $m\not\equiv 0 \Mod 3$.
	
Let $m\equiv 0 \Mod 3$. Observe that $G_{m}(1)$ is a disjoint union of $G_{m,3}^1(1)$ and $G_{m,3}^2(1)$. Define 
$$\Phi_{m,3}^{1}(x)= \prod_{a\in G_{m,3}^1(1)}(x-\omega_m^a)~~ \textnormal{ and } ~~\Phi_{m,3}^2(x)= \prod_{a\in G_{m,3}^2(1)}(x-\omega_m^a).$$ 
It is clear from the definition that $\Phi_m(x)=\Phi_{m,3}^1(x)\Phi_{m,3}^2(x)$.

\begin{theorem}\cite{kadyan2021Secintegral}
		Let $m\equiv 0\Mod 3$. Then $\Phi_{m,3}^1(x)$ and $\Phi_{m,3}^2(x)$ are irreducible monic polynomials in $\mathbb{Q}(\omega_3)[x]$ of degree $\frac{\varphi(m)}{2}$.
	\end{theorem}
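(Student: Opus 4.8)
The plan is to deduce irreducibility of $\Phi_{m,3}^1$ and $\Phi_{m,3}^2$ over $\mathbb{Q}(\omega_3)$ from the already-established extension-degree computation. First I would recall from the excerpt that, since $m\equiv 0\Mod 3$, we have $\mathbb{Q}(\omega_3,\omega_m)=\mathbb{Q}(\omega_m)$ and hence $[\mathbb{Q}(\omega_3,\omega_m):\mathbb{Q}(\omega_3)]=\frac{\varphi(m)}{2}$. The key point is that $\omega_m$ is a root of $\Phi_{m,3}^1(x)$ (after possibly relabelling so that $1\in G_{m,3}^1(1)$; note $1\equiv 1\Mod 3$, so indeed $\omega_m$ is a root of $\Phi_{m,3}^1$), so the minimal polynomial of $\omega_m$ over $\mathbb{Q}(\omega_3)$ has degree exactly $\frac{\varphi(m)}{2}$ and divides $\Phi_{m,3}^1(x)$ in $\mathbb{Q}(\omega_3)[x]$. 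Since $\deg\Phi_{m,3}^1=|G_{m,3}^1(1)|=\frac{\varphi(m)}{2}$ as well (using that $G_m(1)$ is the disjoint union of $G_{m,3}^1(1)$ and $G_{m,3}^2(1)$, and that multiplication by a fixed element of $G_{m,3}^1(1)$ of order coprime considerations gives a bijection $G_{m,3}^1(1)\to G_{m,3}^2(1)$ — e.g. $k\mapsto (m-1)k \bmod m$ sends residue $1$ to residue $2$ and preserves coprimality), the minimal polynomial equals $\Phi_{m,3}^1$ up to the leading coefficient; as $\Phi_{m,3}^1$ is monic, they are equal, so $\Phi_{m,3}^1$ is irreducible in $\mathbb{Q}(\omega_3)[x]$.

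For $\Phi_{m,3}^2$ I would argue by applying a Galois automorphism. The Galois group of $\mathbb{Q}(\omega_m)/\mathbb{Q}$ acts transitively... more precisely, pick $\sigma\in\mathrm{Gal}(\mathbb{Q}(\omega_m)/\mathbb{Q})$ with $\sigma(\omega_m)=\omega_m^{-1}=\omega_m^{m-1}$; since $m-1\equiv 2\Mod 3$, this $\sigma$ carries $\{\omega_m^a : a\in G_{m,3}^1(1)\}$ onto $\{\omega_m^a : a\in G_{m,3}^2(1)\}$, hence $\sigma(\Phi_{m,3}^1(x))=\Phi_{m,3}^2(x)$ coefficientwise. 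Moreover $\sigma$ fixes $\mathbb{Q}(\omega_3)$: indeed $\omega_3=\omega_m^{m/3}$ and $\sigma(\omega_3)=\omega_m^{(m-1)m/3}=\omega_m^{m^2/3-m/3}=\omega_3^{m-1}=\omega_3^{-1}=\omega_3^2$, so $\sigma$ does not fix $\omega_3$ — I need a different automorphism. Instead I would take $\tau\in\mathrm{Gal}(\mathbb{Q}(\omega_m)/\mathbb{Q}(\omega_3))$ corresponding to some $t\in G_{m,3}^2(1)$ (such $t$ exists, e.g. $t$ can be chosen to reduce to $2\Mod 3$ while being coprime to $m$ and fixing $\omega_3$, which forces $t\equiv 1\Mod{3}$... ); this tension is exactly the subtlety, so the cleaner route is: since $\Phi_{m,3}^1$ is already shown irreducible over $\mathbb{Q}(\omega_3)$ and $\Phi_m=\Phi_{m,3}^1\Phi_{m,3}^2$ with $\Phi_m$ having all roots conjugate over $\mathbb{Q}\subseteq\mathbb{Q}(\omega_3)$... no. The robust argument: the automorphism group $\mathrm{Gal}(\mathbb{Q}(\omega_m)/\mathbb{Q}(\omega_3))$ consists of $k\mapsto \omega_m^{t}$ for $t$ in the index-$2$ subgroup $H$ of $(\mathbb{Z}/m)^\times$ fixing $\omega_3$, and one checks $H$ is precisely $\{t : t\equiv 1\Mod 3\}\cap(\mathbb{Z}/m)^\times = G_{m,3}^1(1)\cup\{1\}$-type set; so this group acts transitively on the roots of $\Phi_{m,3}^1$ (giving irreducibility) and separately the full group $(\mathbb{Z}/m)^\times$ swaps the two root sets via any element $\equiv 2\Mod 3$, yielding $\Phi_{m,3}^2=\sigma(\Phi_{m,3}^1)$ for such $\sigma$; applying $\sigma$ to a factorization of $\Phi_{m,3}^2$ over $\mathbb{Q}(\omega_3)$ would give a factorization of $\Phi_{m,3}^1$ over $\sigma(\mathbb{Q}(\omega_3))=\mathbb{Q}(\omega_3)$ (as $\sigma$ permutes $\{\omega_3,\omega_3^2\}$, it fixes $\mathbb{Q}(\omega_3)$ setwise), contradicting irreducibility of $\Phi_{m,3}^1$.

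Monicity of both polynomials is immediate from their definitions as products of monic linear factors, and the degree claim $\frac{\varphi(m)}{2}$ follows from $|G_{m,3}^1(1)|=|G_{m,3}^2(1)|$ together with $\varphi(m)=|G_m(1)|=|G_{m,3}^1(1)|+|G_{m,3}^2(1)|$. The main obstacle I anticipate is the bookkeeping around which residue classes mod $3$ the stabilizer of $\omega_3$ in $(\mathbb{Z}/m)^\times$ actually hits: one must verify carefully that $t$ fixes $\omega_3=\omega_m^{m/3}$ iff $t\equiv 1\Mod 3$, and that the two index-considerations ($[(\mathbb{Z}/m)^\times : H]=2$ and $[G_m(1):G_{m,3}^1(1)]=2$) match up so that $H$ acts transitively on $\{\omega_m^a:a\in G_{m,3}^1(1)\}$. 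Once this is pinned down, both irreducibility statements fall out of the degree equality $[\mathbb{Q}(\omega_3,\omega_m):\mathbb{Q}(\omega_3)]=\frac{\varphi(m)}{2}$ established in the excerpt, combined with the Galois-conjugacy swap $\sigma:\Phi_{m,3}^1\leftrightarrow\Phi_{m,3}^2$.
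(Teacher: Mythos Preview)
The paper does not actually prove this theorem: it is stated with a citation to \cite{kadyan2021Secintegral} and no argument is given beyond the preceding degree computation $[\mathbb{Q}(\omega_3,\omega_m):\mathbb{Q}(\omega_3)]=\varphi(m)/2$. So there is no in-paper proof to compare against; what follows is an assessment of your argument on its own merits.

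Your overall strategy is sound and standard. The degree of the minimal polynomial of $\omega_m$ over $\mathbb{Q}(\omega_3)$ is $\varphi(m)/2$, and since $1\in G_{m,3}^1(1)$ this minimal polynomial must coincide with $\Phi_{m,3}^1$ once you know two things: (a) that $\Phi_{m,3}^1$ has coefficients in $\mathbb{Q}(\omega_3)$, and (b) that $|G_{m,3}^1(1)|=\varphi(m)/2$. You handle (b) correctly via the bijection $k\mapsto (m-1)k$. But (a) is never stated or checked, and it is logically prior to ``the minimal polynomial divides $\Phi_{m,3}^1$ in $\mathbb{Q}(\omega_3)[x]$''. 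The fix is exactly the Galois computation you defer to the end: $\sigma_t(\omega_3)=\omega_3$ iff $t\equiv 1\Mod 3$, so $\mathrm{Gal}(\mathbb{Q}(\omega_m)/\mathbb{Q}(\omega_3))$ is identified with $G_{m,3}^1(1)$, which is closed under multiplication and therefore stabilises the root set $\{\omega_m^a:a\in G_{m,3}^1(1)\}$. This simultaneously gives (a) and, by transitivity of $G_{m,3}^1(1)$ acting on itself, gives irreducibility directly --- so in fact once you nail down this ``anticipated obstacle'' the argument for $\Phi_{m,3}^1$ is complete without the separate degree-matching step.

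Your argument for $\Phi_{m,3}^2$ is correct after the false starts: complex conjugation (i.e.\ $\sigma_{m-1}$) sends $\omega_3\mapsto\omega_3^2$, hence is a field automorphism of $\mathbb{Q}(\omega_3)$ (not the identity, but it does map $\mathbb{Q}(\omega_3)[x]$ to itself), and it interchanges the root sets of $\Phi_{m,3}^1$ and $\Phi_{m,3}^2$; any nontrivial factorisation of $\Phi_{m,3}^2$ over $\mathbb{Q}(\omega_3)$ would be carried to one of $\Phi_{m,3}^1$. A cleaner alternative, avoiding the detour entirely, is to run the same minimal-polynomial argument with $\omega_m^{-1}$ in place of $\omega_m$: since $\mathbb{Q}(\omega_3)(\omega_m^{-1})=\mathbb{Q}(\omega_m)$ and $m-1\in G_{m,3}^2(1)$, the minimal polynomial of $\omega_m^{-1}$ over $\mathbb{Q}(\omega_3)$ is $\Phi_{m,3}^2$.

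In short: the proposal is correct in outline, but you should promote the stabiliser computation $\{\sigma_t:\sigma_t(\omega_3)=\omega_3\}=\{t\equiv 1\Mod 3\}$ from an ``anticipated obstacle'' to the first step, since both the membership $\Phi_{m,3}^1\in\mathbb{Q}(\omega_3)[x]$ and its irreducibility follow from it immediately.
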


%%%%%%%%%%%%%%%%%%%%%%%%%%%%%%%%%%%%%%%%%%%%%%%%%%%%%%%%%%%%%%%%%%%%%%%%%%%%%%%%%%%%%%%%%%%%%
%%%%%%%%%%%%%%%%%%%%%%%%%%%%%%%%%%%%%%%%%%%%%%%%%%%%%%%%%%%%%%%%%%%%%%%%%%%%%%%%%%%%%%%%%%%%%
\section{A sufficient condition for HS-integrality of oriented Cayley graphs over abelian groups}\label{sec3}

In this section, first we prove that $S=\emptyset$ is the only connection set for an HS-integral oriented Cayley graph $\text{Cay}(\Gamma, S)$ whenever $\Gamma(3)=\emptyset$. After that we obtain a sufficient condition on the set $S$ for which the oriented Cayley graph $\text{Cay}(\Gamma, S)$ is HS-integral.

\begin{lema}\label{AbelianSqrt3ZeroSum} Let $S$ be a skew-symmetric subset of an abelian group $\Gamma$. If $\sum\limits_{s \in S} i\sqrt{3} (\psi_{\alpha}(s) -\psi_{\alpha}(-s)) =0$  for all $j=0,...,n-1$ then $S=\emptyset$.
	\end{lema}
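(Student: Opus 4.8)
The plan is to recognize the hypothesis as saying that a certain $\mathbb{C}$-valued function on $\Gamma$ has vanishing discrete Fourier transform, and then to conclude from the invertibility of the character table of $\Gamma$ that this function is identically zero; the skew-symmetry of $S$ then forces $S=\emptyset$.

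Concretely, first I would cancel the nonzero scalar $i\sqrt 3$, so that the hypothesis becomes $\sum_{s\in S}\big(\psi_{\alpha}(s)-\psi_{\alpha}(-s)\big)=0$ for every $\alpha\in\Gamma$. Since $S$ is skew-symmetric we have $S\cap S^{-1}=\emptyset$, so the function $f\colon\Gamma\to\mathbb{C}$ defined by $f(y)=1$ for $y\in S$, $f(y)=-1$ for $y\in S^{-1}$, and $f(y)=0$ otherwise is well defined. Re-indexing the second sum by $s\mapsto -s$ rewrites the hypothesis as $\sum_{y\in\Gamma}f(y)\psi_{\alpha}(y)=0$ for all $\alpha\in\Gamma$; by Theorem~\ref{EigNorColCayMix} this says precisely that every eigenvalue of the Cayley color digraph $\text{Cay}(\Gamma,f)$ vanishes.

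Next I would invoke the fact that $\{\psi_{\alpha}:\alpha\in\Gamma\}$ is a basis of the space of complex-valued functions on $\Gamma$ — equivalently, the matrix $\big(\psi_{\alpha}(y)\big)_{\alpha,y\in\Gamma}$ is invertible (it is $\sqrt{n}$ times a unitary matrix, by the orthogonality relations for characters). Hence $\sum_{y\in\Gamma}f(y)\psi_{\alpha}(y)=0$ for all $\alpha$ forces $f\equiv 0$, i.e.\ $\mathbf{1}_S=\mathbf{1}_{S^{-1}}$, so $S=S^{-1}$. Finally, $S=S^{-1}$ together with skew-symmetry (that $-s\notin S$ for each $s\in S$) is possible only when $S=\emptyset$: if some $s\in S$, then $-s=s^{-1}\in S^{-1}=S$, a contradiction.

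There is essentially no serious obstacle in this argument; it is a one-line application of the linear independence of the irreducible characters of a finite abelian group. The only points that need a little care are packaging the alternating sum $\psi_{\alpha}(s)-\psi_{\alpha}(-s)$ as a single Fourier transform so that the inversion step applies cleanly, and remembering to use skew-symmetry a second time at the end to upgrade $S=S^{-1}$ to $S=\emptyset$.
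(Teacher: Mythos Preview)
Your proposal is correct and follows essentially the same idea as the paper: the paper encodes the function $f$ you define (scaled by $i\sqrt{3}$) as the adjacency matrix $A_S$ of a Cayley color digraph, observes that the hypothesis says all its eigenvalues vanish, and concludes $A_S=0$, hence $S=\emptyset$. Your phrasing via invertibility of the character table is just the Fourier-inversion version of the same step; the only cosmetic difference is that you conclude $f\equiv 0\Rightarrow S=S^{-1}\Rightarrow S=\emptyset$, whereas one can read off $S=\emptyset$ directly from $f\equiv 0$ since $f$ takes the value $1$ on $S$.
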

	\begin{proof} Let $A_S=(a_{uv})_{n\times n}$ be the matrix whose rows and columns are indexed by the elements of $\Gamma$, where 
	$$a_{uv} = \left\{ \begin{array}{cl}
		i\sqrt{3} & \mbox{ if } v-u \in S \\
		-i\sqrt{3} & \mbox{ if } v-u \in S^{-1}\\
		0 &\textnormal{ otherwise.}
	\end{array}\right.$$ 
Since $A_S$ is a circulant matrix,  $\lambda_{\alpha}=\sum\limits_{k\in S} i\sqrt{3} (\psi_{\alpha}(s)-\psi_{\alpha}(-s))$ is an HS-eigenvalue of $A_S$ for each $\alpha \in \Gamma$. Therefore $\lambda_{\alpha}=0$ for all $\alpha \in \Gamma$, which implies all the entries of $A_S$ are zero. Hence $S=\emptyset$.
	\end{proof}

\begin{theorem}\label{ori4}
Let $\Gamma$ be an abelian group and $\Gamma(3) = \emptyset$. Then the oriented Cayley graph $\text{Cay}(\Gamma, S)$ is HS-integral if and only if $S=\emptyset$
\end{theorem}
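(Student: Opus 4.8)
The plan is to separate each HS-eigenvalue into a ``cyclotomic'' part that avoids $\omega_3$ and a part proportional to $i\sqrt{3}$, force the latter to vanish using the hypothesis $\Gamma(3)=\emptyset$, and then invoke Lemma~\ref{AbelianSqrt3ZeroSum}. The backward implication needs no work: if $S=\emptyset$, then $\mathcal{H}(\text{Cay}(\Gamma,S))$ is the zero matrix, so every HS-eigenvalue equals $0\in\mathbb{Z}$.

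For the forward implication, assume $\text{Cay}(\Gamma,S)$ is HS-integral, where $S$ is skew-symmetric since the graph is oriented. By Corollary~\ref{OriEig} its HS-eigenvalues are $\mu_{\alpha}=\sum_{s\in S}\bigl(\omega_6\psi_{\alpha}(s)+\omega_6^{5}\psi_{\alpha}(-s)\bigr)$, $\alpha\in\Gamma$. Using $\omega_6=\frac{1+i\sqrt{3}}{2}$ and $\omega_6^{5}=\frac{1-i\sqrt{3}}{2}$, I would rewrite this as $\mu_{\alpha}=a_{\alpha}+i\sqrt{3}\,b_{\alpha}$, where $a_{\alpha}=\frac{1}{2}\sum_{s\in S}\bigl(\psi_{\alpha}(s)+\psi_{\alpha}(-s)\bigr)$ and $b_{\alpha}=\frac{1}{2}\sum_{s\in S}\bigl(\psi_{\alpha}(s)-\psi_{\alpha}(-s)\bigr)$. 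Let $m$ be the least common multiple of the orders of the elements of $\Gamma$. Since $\Gamma(3)=\emptyset$, no element order is divisible by $3$, so $m\not\equiv 0\pmod{3}$; as recalled in Section~\ref{prelAbelian} this gives $\omega_3\notin\mathbb{Q}(\omega_m)$, equivalently $i\sqrt{3}\notin\mathbb{Q}(\omega_m)$ because $\mathbb{Q}(\omega_3)=\mathbb{Q}(i\sqrt{3})$. By Lemma~\ref{Basic}, every $\psi_{\alpha}(s)$ is an $m$-th root of unity, hence $a_{\alpha},b_{\alpha}\in\mathbb{Q}(\omega_m)$.

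Now HS-integrality gives $\mu_{\alpha}\in\mathbb{Z}\subseteq\mathbb{Q}(\omega_m)$, hence $i\sqrt{3}\,b_{\alpha}=\mu_{\alpha}-a_{\alpha}\in\mathbb{Q}(\omega_m)$; if $b_{\alpha}\neq 0$ this would force $i\sqrt{3}\in\mathbb{Q}(\omega_m)$, a contradiction. Thus $b_{\alpha}=0$, i.e. $\sum_{s\in S}\bigl(\psi_{\alpha}(s)-\psi_{\alpha}(-s)\bigr)=0$, for all $\alpha\in\Gamma$; multiplying by $i\sqrt{3}$ puts this in exactly the form required by Lemma~\ref{AbelianSqrt3ZeroSum}, which yields $S=\emptyset$. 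The one step carrying the content is the field-theoretic separation: checking that $a_{\alpha}$ and the coefficient $b_{\alpha}$ of $i\sqrt{3}$ genuinely lie in a cyclotomic field missing $\omega_3$ — this is precisely where $\Gamma(3)=\emptyset$ enters — after which HS-integrality leaves no room for an $i\sqrt{3}$-component and Lemma~\ref{AbelianSqrt3ZeroSum} finishes the argument. Everything else is routine manipulation of $\omega_6$ together with appeals to results already established.
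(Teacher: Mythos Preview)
Your proof is correct. Both your argument and the paper's hinge on the same field-theoretic fact --- that when $3\nmid m$ the fields $\mathbb{Q}(\omega_m)$ and $\mathbb{Q}(\omega_3)=\mathbb{Q}(i\sqrt{3})$ meet only in $\mathbb{Q}$ --- and both finish with Lemma~\ref{AbelianSqrt3ZeroSum}. The implementations differ: you split $\mu_{\alpha}=a_{\alpha}+i\sqrt{3}\,b_{\alpha}$ with $a_{\alpha},b_{\alpha}\in\mathbb{Q}(\omega_m)$ and conclude $b_{\alpha}=0$ directly from $i\sqrt{3}\notin\mathbb{Q}(\omega_m)$, whereas the paper writes $\mu_{\alpha}$ as a polynomial in a primitive $n$-th root $\omega$ with coefficients in $\mathbb{Q}(\omega_3)$, invokes the irreducibility of $\Phi_n$ over $\mathbb{Q}(\omega_3)$ to deduce that $\omega^{-1}$ is also a root, and so obtains $\mu_{\alpha}=\mu_{-\alpha}$ (equivalently $b_{\alpha}=0$). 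Your route is slightly more direct since it bypasses the polynomial-substitution step; the paper's formulation, on the other hand, parallels the Galois-conjugate technique reused in Lemma~\ref{Ori4Nec}.
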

\begin{proof}  
Let $G=\text{Cay}(\Gamma, S)$ and $Sp_H(G)=\{ \mu_{\alpha}: \alpha \in \Gamma \}$. Assume that $\text{Cay}(\Gamma, S)$ is HS-integral and $n\not\equiv 0 \Mod 3$. By Corollary~\ref{OriEig}, $$\mu_{\alpha}=\sum_{s\in S} (\omega_6 \psi_{\alpha}(s)+ \omega_6^5\psi_{\alpha}(-s)) \in \mathbb{Z}, \textnormal{ for all } \alpha \in \Gamma.$$ 
Note that, $\psi_{\alpha}(s)$ and $\psi_{\alpha}(-s)$ are $n^{th}$ roots of unity for all $ \alpha \in \Gamma, s\in S$.
Fix a primitive $n^{th}$ root $\omega$ of unity and express $\psi_{\alpha}(s)$ in the form $\omega^j$ for some $j \in \{ 0,1,...,n-1\}$. Thus $$\mu_{\alpha}= \sum_{s\in S} (\omega_6 \psi_{\alpha}(s)+ \omega_6^5\psi_{\alpha}(-s) ) = \sum_{j=0}^{n-1} a_j \omega^j,$$
 where $a_j \in \mathbb{Q}(\omega_3)$. Since $\mu_{\alpha} \in \mathbb{Z}$, so $p(x)= \sum\limits_{j=0}^{n-1} a_j x^j- \mu_{\alpha} \in \mathbb{Q}(\omega_3)[x]$ and $\omega$ is a root of $p(x)$. Since $n\not\equiv 0(\mod 3)$, so $\Phi_n(x)$ is irreducible in $\mathbb{Q}(\omega_3)[x]$. Thus $p(\omega)=0$ and $\Phi_n(x)$ is the monic irreducible polynomial over $\mathbb{Q}(\omega_3)$ having $\omega$ as a root. Therefore $\Phi_n(x)$ divides $p(x)$, and so $\omega^{-1}=\omega^{n-1}$ is also a root of $p(x)$. Note that, if $\psi_{\alpha}(s)=\omega^j$ for some $j \in \{ 0,1,...,n-1\}$ then $\psi_{-\alpha}(s)=\omega^{-j}$. We have 
\begin{align*}
\sum_{s \in S} i\sqrt{3} (\psi_{\alpha}(s) -  \psi_{\alpha}(-s))&=\sum_{s \in S}[(\omega_6 - \omega_6^5)  \psi_{\alpha}(s) + (\omega_6^5 - \omega_6) \psi_{\alpha}(-s)]\\
&=\sum_{j=0}^{n-1} a_j \omega^{-j}- \mu_{\alpha}=\mu_{-\alpha}-\mu_{\alpha}=p(\omega^{-1})=0 .
\end{align*} 
By Lemma \ref{AbelianSqrt3ZeroSum}, $S=\emptyset$. Conversely, if $S=\emptyset$ then all the HS-eigenvalues of $\text{Cay}(\Gamma, S)$ are zero. Thus $\text{Cay}(\Gamma, S)$ is HS-integral.
\end{proof}

\begin{lema}\label{imcgoa11}
Let $\Gamma$ be an abelian group and $x\in \Gamma(3)$. Then $\sum\limits_{s\in M_1(x)} \left(\omega_6 \psi_{\alpha}(s) + \omega_6^5 \psi_{\alpha}(-s)\right)$ is an integer for each $\alpha \in \Gamma$.
\end{lema}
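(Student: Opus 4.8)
The plan is to evaluate the sum directly as a geometric progression on the cyclic subgroup $\langle x\rangle$. Write $m=\text{ord}(x)$ and $g=m/3$, and for $r\in\{1,2\}$ set $T_r=\sum_{s\in M_r(x)}\psi_\alpha(s)$. First I would rewrite the target sum: since $-M_1(x)=M_2(x)$ and $\psi_\alpha(-s)=\psi_\alpha(s)^{-1}$, the substitution $t=-s$ gives $\sum_{s\in M_1(x)}\psi_\alpha(-s)=\sum_{t\in M_2(x)}\psi_\alpha(t)$, hence
\[
\sum_{s\in M_1(x)}\bigl(\omega_6\psi_\alpha(s)+\omega_6^5\psi_\alpha(-s)\bigr)=\omega_6\,T_1+\omega_6^5\,T_2 .
\]
Next put $\zeta:=\psi_\alpha(x)$; by Lemma~\ref{Basic}(ii) we have $\zeta^{m}=1$. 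Since $x$ has order $m$, the $g$ integers $k\in\{1,\dots,m\}$ with $k\equiv 1\pmod 3$ (respectively $k\equiv 2\pmod 3$) give pairwise distinct elements $x^k$, so $T_1=\zeta\sigma$ and $T_2=\zeta^{2}\sigma$ where $\sigma:=\sum_{j=0}^{g-1}\zeta^{3j}$.

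Then I would split on whether $\zeta^{3}=1$. If $\zeta^{3}\ne 1$, then $\sigma=\dfrac{\zeta^{3g}-1}{\zeta^{3}-1}=\dfrac{\zeta^{m}-1}{\zeta^{3}-1}=0$, so $\omega_6T_1+\omega_6^5T_2=0\in\mathbb{Z}$. If $\zeta^{3}=1$, then $\sigma=g$ and $\omega_6T_1+\omega_6^5T_2=g\bigl(\omega_6\zeta+\omega_6^5\zeta^{2}\bigr)$. Here $\zeta$ is a cube root of unity, so $\zeta\in\{1,\omega_6^{2},\omega_6^{4}\}$ and $\zeta^{2}=\overline{\zeta}$; together with $\omega_6^5=\overline{\omega_6}$ this gives $\omega_6\zeta+\omega_6^5\zeta^{2}=2\,\mathrm{Re}(\omega_6\zeta)$, and as $\zeta$ runs over the cube roots of unity, $\omega_6\zeta$ runs over $\{\omega_6,\omega_6^{3},\omega_6^{5}\}=\bigl\{\tfrac{1+i\sqrt3}{2},\,-1,\,\tfrac{1-i\sqrt3}{2}\bigr\}$, whose real parts lie in $\{\tfrac12,-1\}$. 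Hence $\omega_6T_1+\omega_6^5T_2\in\{g,\,-2g\}\subseteq\mathbb{Z}$, which completes the argument.

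I do not expect a serious obstacle; the two points needing a little care are that $M_r(x)$ is genuinely in bijection with its exponent set $\{k:1\le k\le m,\ k\equiv r\pmod 3\}$ (so that the set-sum really is the stated geometric series), and the collapse $\sum_{j=0}^{g-1}\zeta^{3j}=0$ when $\zeta^{3}\ne 1$, which is the mechanism forcing most of these partial eigenvalue contributions to vanish. As an alternative one could first deduce $T_1+T_2\in\mathbb{Z}$ from Lemma~\ref{imcgoa4}(i) together with Theorem~\ref{Cayint}, but one would still need to control the twisted combination $\omega_6T_1+\omega_6^5T_2$, so the direct computation above seems the shortest route.
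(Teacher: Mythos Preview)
Your proof is correct and is essentially the same as the paper's: the paper makes the equivalent case split on whether $\psi_\alpha(a)=1$ for all $a\in M_0(x)$ (which is exactly your condition $\zeta^{3}=1$), disposing of the first case via the shift-invariance $a+M_r(x)=M_r(x)$ to get $(1-\psi_\alpha(a))\mu_\alpha=0$ rather than your explicit geometric-series collapse $\sigma=0$, and then computing the same values $|M_1(x)|=g$ and $-2|M_1(x)|=-2g$ in the second case.
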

\begin{proof} Let $x\in \Gamma(3)$, $\alpha \in \Gamma$ and  $\mu_{\alpha}=\sum\limits_{s\in M_1(x)}\left( \omega_6 \psi_{\alpha}(s) + \omega_6^5 \psi_{\alpha}(-s)\right).$

\noindent\textbf{Case 1.} There exists $a\in M_0(x)$ such that $\psi_{\alpha}(a)\neq 1$. Then 

\begin{equation*}
\begin{split}
\mu_{\alpha}=\sum\limits_{s\in M_1(x)}\left( \omega_6 \psi_{\alpha}(s) + \omega_6^5 \psi_{\alpha}(-s)\right)
&= \sum\limits_{s\in M_1(x)} \omega_6 \psi_{\alpha}(s) + \sum\limits_{s\in M_2(x)} \omega_6^5 \psi_{\alpha}(s)\\
&= \sum\limits_{s\in a+M_1(x)} \omega_6 \psi_{\alpha}(s) + \sum\limits_{s\in a+M_2(x)} \omega_6^5 \psi_{\alpha}(s)\\
&=\psi_{\alpha}(a) \sum\limits_{s\in M_1(x)} \omega_6 \psi_{\alpha}(s) + \psi_{\alpha}(a) \sum\limits_{s\in M_2(x)} \omega_6^5 \psi_{\alpha}(s)\\
&= \psi_{\alpha}(a) \mu_{\alpha}.
\end{split}
\end{equation*} 
We have $(1-\psi_{\alpha}(a)) \mu_{\alpha}=0$. Since $\psi_{\alpha}(a)\neq 1$, we get $\mu_{\alpha}=0\in \mathbb{Z}$.

\noindent\textbf{Case 2.} Assume that $\psi_{\alpha}(a)=1$ for all $a\in M_0(x)$. Then $\psi_{\alpha}(s)=\psi_{\alpha}(x)$ for all $s\in M_1(x)$ and $\psi_{\alpha}(s)=\psi_{\alpha}(x^2)$ for all $s\in M_2(x)$. Therefore
\begin{equation*}
\begin{split}
\mu_{\alpha}=\sum\limits_{s\in M_1(x)}\left( \omega_6 \psi_{\alpha}(s) + \omega_6^5 \psi_{\alpha}(-s)\right)
&= \sum\limits_{s\in M_1(x)} \omega_6 \psi_{\alpha}(s) + \sum\limits_{s\in M_2(x)} \omega_6^5 \psi_{\alpha}(s)\\
&= |M_1(x)|(\omega_6 \psi_{\alpha}(x) + \omega_6^5 \psi_{\alpha}(x^2))\\
&= -|M_1(x)|(\omega_3^2 \psi_{\alpha}(x) + \omega_3 \psi_{\alpha}(x^2)).
\end{split}
\end{equation*}
Since $\psi_{\alpha}(x)^3=1$ then $\psi_{\alpha}(x)=\omega_3$ or $\omega_3^2$. If $\psi_{\alpha}(x)=\omega_3$ then $\mu_{\alpha}= -2 |M_1(x)|$. If $\psi_{\alpha}(x)=\omega_3^2$ then $\mu_{\alpha}=  |M_1(x)|$. Thus in both cases, $\mu_{\alpha}$ are integers for all $\alpha \in \Gamma$. 
\end{proof}

For $x\in \Gamma(3)$ and $\alpha \in \Gamma$, define 
$$Z_{x}(\alpha)= \sum\limits_{s\in \langle\!\langle x \rangle\!\rangle}\left( \omega_6 \psi_{\alpha}(s) + \omega_6^5 \psi_{\alpha}(-s)\right).$$

\begin{lema}\label{integerEigenvalue}
Let $\Gamma$ be an abelian group and $x\in \Gamma(3)$. Then $Z_{x}(\alpha)$ is an integer for each $\alpha \in \Gamma$.
\end{lema}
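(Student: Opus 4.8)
The plan is to prove $Z_x(\alpha)\in\mathbb{Z}$ by strong induction on $m=\text{ord}(x)$ among the elements of $\Gamma(3)$, feeding the decomposition of $M_1(x)$ from Lemma~\ref{imcgoa4}(ii) into Lemma~\ref{imcgoa11}.

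First I would record a symmetry fact. For $y\in\Gamma(3)$ with $\text{ord}(y)=q$, the map $k\mapsto q-k$ is a bijection $G^1_{q,3}(1)\to G^2_{q,3}(1)$, so by parts (i) and (ii) of Lemma~\ref{lemanecc}, $\langle\!\langle -y\rangle\!\rangle=\{y^{q-k}:k\in G^1_{q,3}(1)\}=-\langle\!\langle y\rangle\!\rangle$. Combining this with the substitution $s\mapsto -s$ and the identity $\psi_\alpha(-s)=\psi_{-\alpha}(s)$ (immediate from~\eqref{character}) gives
$$\sum_{s\in\langle\!\langle -y\rangle\!\rangle}\left(\omega_6\psi_\alpha(s)+\omega_6^5\psi_\alpha(-s)\right)=\sum_{s\in\langle\!\langle y\rangle\!\rangle}\left(\omega_6\psi_{-\alpha}(s)+\omega_6^5\psi_{-\alpha}(-s)\right)=Z_y(-\alpha),$$
so every ``half-atom'' sum occurring below is an evaluation of some $Z_{x^h}$ at $\pm\alpha$.

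Now set $g=m/3$. By Lemma~\ref{imcgoa4}(ii), $M_1(x)=\bigcup_{h\in D^1_{g,3}}\langle\!\langle x^h\rangle\!\rangle\cup\bigcup_{h\in D^2_{g,3}}\langle\!\langle -x^h\rangle\!\rangle$, and this union is disjoint: the atoms $[x^h]$ are distinct for distinct $h\mid g$ since $\text{ord}(x^h)=m/h$, while $\langle\!\langle x^h\rangle\!\rangle\cap\langle\!\langle -x^h\rangle\!\rangle=\emptyset$ by Lemma~\ref{lemanecc}(iii). Summing the weight $\omega_6\psi_\alpha(s)+\omega_6^5\psi_\alpha(-s)$ over $M_1(x)$, applying the symmetry fact to the second family, and separating the term $h=1\in D^1_{g,3}$, we obtain
$$\sum_{s\in M_1(x)}\left(\omega_6\psi_\alpha(s)+\omega_6^5\psi_\alpha(-s)\right)=Z_x(\alpha)+\sum_{h\in D^1_{g,3},\,h>1}Z_{x^h}(\alpha)+\sum_{h\in D^2_{g,3}}Z_{x^h}(-\alpha).$$
The left-hand side is an integer by Lemma~\ref{imcgoa11}. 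For every $h>1$ appearing on the right, $h$ divides $g$, so $x^h\in\Gamma(3)$ and $\text{ord}(x^h)=m/h<m$; by the induction hypothesis $Z_{x^h}(\alpha)$ and $Z_{x^h}(-\alpha)$ are integers. Hence $Z_x(\alpha)\in\mathbb{Z}$. The base case $m=3$ is immediate: then $g=1$, $D^1_{g,3}=\{1\}$, $D^2_{g,3}=\emptyset$, so $M_1(x)=\langle\!\langle x\rangle\!\rangle$ and Lemma~\ref{imcgoa11} gives $Z_x(\alpha)=\sum_{s\in M_1(x)}(\omega_6\psi_\alpha(s)+\omega_6^5\psi_\alpha(-s))\in\mathbb{Z}$ directly (the same shortcut covers every $m$ with $D_{g,3}=\{1\}$, i.e.\ $g$ a power of $3$).

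The delicate part is purely the bookkeeping: making sure the union in Lemma~\ref{imcgoa4}(ii) is genuinely disjoint so that the $M_1(x)$-sum splits termwise, verifying $\langle\!\langle -y\rangle\!\rangle=-\langle\!\langle y\rangle\!\rangle$ and the reduction of a $\langle\!\langle -x^h\rangle\!\rangle$-sum to $Z_{x^h}(-\alpha)$, and confirming $x^h\in\Gamma(3)$ with strictly smaller order so the inductive hypothesis applies. As a side check one also has $Z_x(\alpha)+Z_x(-\alpha)=\sum_{s\in[x]}\psi_\alpha(s)$ (using $\langle\!\langle x\rangle\!\rangle\cup\langle\!\langle -x\rangle\!\rangle=[x]$ from Lemma~\ref{lemanecc}(iv) and $\omega_6+\omega_6^5=1$), which is an eigenvalue of the integral Cayley graph $\text{Cay}(\Gamma,[x])$ by Theorem~\ref{Cayint} and Corollary~\ref{simpleAbelianEigBabai}, hence an integer; this is not needed above but independently confirms that the $Z$-values are integral in the aggregate.
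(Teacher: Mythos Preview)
Your proof is correct and follows essentially the same route as the paper: strong induction on $\text{ord}(x)$, using the decomposition of $M_1(x)$ from Lemma~\ref{imcgoa4}(ii) together with Lemma~\ref{imcgoa11}. The only cosmetic difference is that the paper records the $\langle\!\langle -x^h\rangle\!\rangle$-contribution as $Z_{-x^h}(\alpha)$ (to which the induction hypothesis applies directly since $-x^h\in\Gamma(3)$ with smaller order) rather than rewriting it as $Z_{x^h}(-\alpha)$ via your symmetry fact; your version is slightly more explicit about disjointness and the base case.
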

\begin{proof} Note that there exists $x\in \Gamma(3)$ with $\text{ord}(x)=3$. Apply induction on $\text{ord}(x)$. If $\text{ord}(x)=3$, then $M_1(x)=\langle\!\langle x \rangle\!\rangle$. Hence by Lemma ~\ref{imcgoa11}, $Z_{x}(\alpha)$ is an integer for each $\alpha \in \Gamma$. Assume that the statement  holds for all $x\in \Gamma(3)$ with $\text{ord}(x)\in \{ 3,6,...,3(g-1)\}$. We prove it for $\text{ord}(x)=3g$. Lemma ~\ref{imcgoa4} implies that
$$M_1(x)= \bigcup\limits_{h\in D_{g,3}^1} \langle\!\langle x^h \rangle\!\rangle \cup \bigcup\limits_{h\in D_{g,3}^2} \langle\!\langle -x^h \rangle\!\rangle.$$ 
If $\text{ord}(x)=3g=m, h\in D_{g,3}^1\cup D_{g,3}^2$, and $h>1$ then $\text{ord}(x^h), \text{ord}(-x^h)\in \{ 3,6,...,3(g-1)\}$. By induction hypothesis, both $Z_{x^h}(\alpha)$ and $Z_{-x^h}(\alpha)$ are integers for all $\alpha \in \Gamma$. Now we have 
\begin{equation*}
\begin{split}
\sum\limits_{s\in M_1(x)}\left( \omega_6\psi_{\alpha}(s)+ \omega_6^5\psi_{\alpha}(s)\right) &= Z_{x}(\alpha)+ \sum_{h\in D_{g,3}^1, h> 1} Z_{x^h}(\alpha) + \sum_{h\in D_{g,3}^2, h> 1} Z_{-x^h}(\alpha).
\end{split} 
\end{equation*}
By Lemma ~\ref{imcgoa11} and induction hypothesis,
\begin{equation*}
\begin{split}
Z_{x}(\alpha) = \sum\limits_{s\in M_1(x)}\left( \omega_6\psi_{\alpha}(s)+ \omega_6^5\psi_{\alpha}(s)\right) - \sum_{h\in D_{g,3}^1, h> 1} Z_{x^h}(\alpha) - \sum_{h\in D_{g,3}^2, h> 1} Z_{-x^h}(\alpha)
\end{split} 
\end{equation*} 
is an integer for each $\alpha \in \Gamma$.
\end{proof}

 For $\Gamma(3) \neq \emptyset$, define $\mathbb{E}(\Gamma)$ to be the set of all skew-symmetric subsets of $\Gamma$ of the form $\langle\!\langle x_1 \rangle\!\rangle\cup...\cup \langle\!\langle x_k \rangle\!\rangle$ for some $x_1,...,x_k\in \Gamma(3)$. For  $\Gamma(3) = \emptyset$, define $\mathbb{E}(\Gamma)=\{ \emptyset \}$.
 
\begin{theorem}\label{OrientedChara}
Let $\Gamma$ be an abelian group. If $S \in \mathbb{E}(\Gamma)$ then the oriented Cayley graph  $\text{Cay}(\Gamma, S)$ is HS-integral.
\end{theorem}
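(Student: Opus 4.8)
The goal is to show that if $S \in \mathbb{E}(\Gamma)$ then $\text{Cay}(\Gamma, S)$ is HS-integral, i.e. every HS-eigenvalue $\mu_\alpha$ is an integer. The natural approach is to use Corollary~\ref{OriEig}, which expresses the HS-eigenvalues of the oriented Cayley graph $\text{Cay}(\Gamma, S)$ as
$$\mu_{\alpha}=\sum_{s\in S}\left(\omega_6 \psi_{\alpha}(s)+ \omega_6^5 \psi_{\alpha}(-s)\right) \quad \text{for all } \alpha \in \Gamma,$$
and then to reduce the computation to the building blocks $Z_x(\alpha)$ already shown to be integers in Lemma~\ref{integerEigenvalue}.

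**Key steps.** First I would dispose of the trivial case: if $\Gamma(3)=\emptyset$ then by definition $\mathbb{E}(\Gamma)=\{\emptyset\}$, so $S=\emptyset$, and then all HS-eigenvalues are $0$, hence integers. Next, for the main case $\Gamma(3)\neq\emptyset$, take $S\in\mathbb{E}(\Gamma)$, so by definition $S = \langle\!\langle x_1 \rangle\!\rangle \cup \cdots \cup \langle\!\langle x_k \rangle\!\rangle$ for some $x_1,\dots,x_k\in\Gamma(3)$. The sum defining $\mu_\alpha$ is additive over $S$, so I would write $S$ as a \emph{disjoint} union of the classes $\langle\!\langle x_j \rangle\!\rangle$ (discarding repetitions if necessary, or simply noting that by Lemma~\ref{lemanecc} two such classes are either equal or disjoint). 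This gives
$$\mu_\alpha = \sum_{j=1}^{k} \sum_{s \in \langle\!\langle x_j \rangle\!\rangle} \left(\omega_6 \psi_\alpha(s) + \omega_6^5 \psi_\alpha(-s)\right) = \sum_{j=1}^{k} Z_{x_j}(\alpha),$$
where the last equality is just the definition of $Z_{x_j}(\alpha)$. By Lemma~\ref{integerEigenvalue}, each $Z_{x_j}(\alpha)$ is an integer, so $\mu_\alpha$ is a finite sum of integers, hence an integer, for every $\alpha\in\Gamma$. This shows $\text{Cay}(\Gamma,S)$ is HS-integral.

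**Main obstacle.** There is essentially no obstacle here: all the hard work has been pushed into Lemma~\ref{integerEigenvalue} (which in turn rests on Lemma~\ref{imcgoa11} and the decomposition in Lemma~\ref{imcgoa4}). The only point requiring a little care is making sure that the union defining a member of $\mathbb{E}(\Gamma)$ can be taken to be disjoint so that the eigenvalue sum splits cleanly as $\sum_j Z_{x_j}(\alpha)$; this follows because distinct $\approx$-equivalence classes are disjoint, so we may assume $x_1,\dots,x_k$ lie in distinct classes and then $S$ is genuinely a disjoint union. After that the proof is a one-line application of linearity of $\mu_\alpha$ in $S$ together with Lemma~\ref{integerEigenvalue}.
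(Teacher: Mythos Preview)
Your proposal is correct and follows essentially the same approach as the paper: decompose $S$ into $\approx$-classes and write $\mu_\alpha=\sum_j Z_{x_j}(\alpha)$, then invoke Lemma~\ref{integerEigenvalue}. If anything you are slightly more careful than the paper, which passes silently over both the trivial case $\Gamma(3)=\emptyset$ and the disjointness of the union.
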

\begin{proof}
Assume that $S \in \mathbb{E}(\Gamma)$. Then $S=\langle\!\langle x_1 \rangle\!\rangle\cup...\cup \langle\!\langle x_k \rangle\!\rangle$ for some $x_1,...,x_k\in \Gamma(3)$. We have 	
\begin{equation*}
	\begin{split}
\mu_{\alpha} &=\sum_{s\in S}\left( \omega_6 \psi_{\alpha}(s)+ \omega_6^5 \psi_{\alpha}(-s)\right)= \sum_{j=1}^k Z_{x_j}(\alpha).
	\end{split} 
\end{equation*}
Now by Lemma~\ref{integerEigenvalue}, $\mu_{\alpha}$ is an integer for each $\alpha \in \Gamma$. Hence the oriented Cayley graph  $\text{Cay}(\Gamma, S)$ is HS-integral.
\end{proof}

%%%%%%%%%%%%%%%%%%%%%%%%%%%%%%%%%%%%%%%%%%%%%%%%%%%%%%%%%%%%%

\section{Characterization of HS-integral mixed Cayley graphs over abelian groups}\label{sec4}

Let $\Gamma$ be an abelian group of order $n$. Define $E$ to be the matrix of size $n\times n$,  whose rows and columns are indexed by elements of $\Gamma$ such that $E_{x,y}=\psi_{x}(y)$. Note that each row of $E$ corresponds to a character of $\Gamma$ and $EE^*=nI_n$, where $E^*$ is the conjugate transpose of $E$. Let $v_{\langle\!\langle x \rangle\!\rangle}$ be the vector in $\mathbb{Q}(\omega_3)^n$ whose coordinates are indexed by the  elements of $\Gamma$, and the $a^{th}$ coordinate of $v_{\langle\!\langle x \rangle\!\rangle}$ is given by
	$$v_{\langle\!\langle x \rangle\!\rangle}(a) = \left\{ \begin{array}{cl}
		\omega_6 &\mbox{ if }
		a \in \langle\!\langle x \rangle\!\rangle \\\omega_6^5 & \mbox{ if } a \in \langle\!\langle -x \rangle\!\rangle\\
		0 &\textnormal{ otherwise.}
	\end{array}\right.$$ By Lemma~\ref{integerEigenvalue}, we have $Ev_{\langle\!\langle x \rangle\!\rangle} \in \mathbb{Z}^n$. For $z\in \mathbb{C}$, let $\overline{z}$ denote the complex conjugate of $z$ and $\Re (z)$ (resp. $\Im (z)$) denote the real part (resp. imaginary part) of $z$.

\begin{lema}\label{Ori4Nec} Let $\Gamma$ be an abelian group, $v\in \mathbb{Q}(\omega_3)^n$ and $Ev \in \mathbb{Q}^n$. Let the coordinates of $v$ be indexed by elements of $\Gamma$. Then
\begin{enumerate}[label=(\roman*)]
\item $\overline{v}_x=v_{-x}$ for all $x \in \Gamma$.
\item $v_x=v_y$ for all $x,y \in \Gamma(3)$ satisfying $x \approx y$.
\item $\Re(v_x)=\Re(v_{-x})$ and $\Im(v_x)=\Im(v_{-x})=0$ for all $x\in \Gamma \setminus \Gamma(3)$.
\end{enumerate} 
\end{lema}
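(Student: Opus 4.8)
My plan is to reduce all three claims to a single explicit formula for the entries of $v$ and then read them off from elementary cyclotomic Galois theory. First I would invert $E$: from $EE^*=nI_n$ we get $E^{-1}=\tfrac1n E^*$, so writing $w:=Ev$ (which lies in $\mathbb{Q}^n$ by hypothesis) and using $\overline{\psi_y(x)}=\psi_y(-x)$ for a root of unity, one obtains the master formula
\[
v_x\;=\;\frac1n\sum_{y\in\Gamma} w_y\,\psi_y(-x),\qquad x\in\Gamma .
\]
Two features of this formula do all the work. Each $\psi_y(-x)$ is a root of unity of order dividing $\mathrm{ord}(-x)=\mathrm{ord}(x)$ by Lemma~\ref{Basic}, so $v_x\in\mathbb{Q}(\omega_{\mathrm{ord}(x)})$; and by assumption $v_x\in\mathbb{Q}(\omega_3)$. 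Moreover the coefficients $w_y$ are rational, hence fixed by every field automorphism and by complex conjugation.

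For part (i) I would just conjugate the master formula: $\overline{v_x}=\frac1n\sum_y w_y\,\overline{\psi_y(-x)}=\frac1n\sum_y w_y\,\psi_y(x)=v_{-x}$, the last equality being the master formula with $x$ replaced by $-x$ (since $-(-x)=x$). For part (iii), take $x\in\Gamma\setminus\Gamma(3)$ and put $m=\mathrm{ord}(x)$, so $3\nmid m$; then $v_x$ lies in $\mathbb{Q}(\omega_m)\cap\mathbb{Q}(\omega_3)$, which equals $\mathbb{Q}$ because $\mathbb{Q}(\omega_3)$ has prime degree $2$ over $\mathbb{Q}$ and, as recalled in Section~\ref{prelAbelian}, $\omega_3\notin\mathbb{Q}(\omega_m)$ whenever $m\not\equiv0\pmod 3$. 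Hence $v_x\in\mathbb{Q}$ is real, so $\Im(v_x)=0$; combining with part (i) gives $v_{-x}=\overline{v_x}=v_x$, whence also $\Re(v_x)=\Re(v_{-x})$ and $\Im(v_{-x})=0$.

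For part (ii), let $x,y\in\Gamma(3)$ with $x\approx y$, say $y=x^k$ where $m=\mathrm{ord}(x)$, $\gcd(k,m)=1$, and $k\equiv1\pmod3$. Since $-x^k=(-x)^k$ and $\psi_z$ is a homomorphism, $\psi_z(-y)=\psi_z(-x)^k$; applying the automorphism $\sigma_k\in\mathrm{Gal}(\mathbb{Q}(\omega_m)/\mathbb{Q})$ with $\sigma_k(\omega_m)=\omega_m^k$ to the master formula for $v_x$ then gives $\sigma_k(v_x)=\frac1n\sum_z w_z\,\psi_z(-x)^k=v_y$, because $\sigma_k$ raises each $m$-th root of unity to its $k$-th power and fixes the rational $w_z$. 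The key point is that $\omega_3=\omega_m^{m/3}$ (here $3\mid m$), so $\sigma_k(\omega_3)=\omega_3^{k}=\omega_3$ since $3\mid k-1$; thus $\sigma_k$ fixes $\mathbb{Q}(\omega_3)$, and as $v_x\in\mathbb{Q}(\omega_3)$ we conclude $v_y=\sigma_k(v_x)=v_x$. This last step is the crux: the argument depends entirely on the residue condition $k\equiv1\pmod3$ built into $\approx$ (and absent from the coarser $\sim$), which is precisely what forces the Kummer automorphism $\omega_m\mapsto\omega_m^k$ to fix $\omega_3$. Everything else is bookkeeping; the point I would check most carefully is that $\psi_z(-x)$ really has order dividing $m$, so that $\sigma_k$ acts on it as a $k$-th power.
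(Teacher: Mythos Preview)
Your proof is correct. Both you and the paper begin from the inversion $v=\frac{1}{n}E^*w$ with $w=Ev\in\mathbb{Q}^n$, and both ultimately rest on the cyclotomic fact that the substitution $\omega_m\mapsto\omega_m^k$ fixes $\omega_3$ exactly when $k\equiv1\pmod3$. The packaging, however, is different. The paper writes $nv_x=\sum_j a_j\omega^j$ with $a_j\in\mathbb{Q}$, notes that $\omega$ is a root of $p(t)=\sum_j a_jt^j-nv_x\in\mathbb{Q}(\omega_3)[t]$, and then invokes the irreducibility of $\Phi_{m,3}^1$ over $\mathbb{Q}(\omega_3)$ (for part~(ii)) or of $\Phi_r$ over $\mathbb{Q}(\omega_3)$ when $3\nmid r$ (for part~(iii)) to conclude that $\omega^k$, respectively $\omega^{-1}$, is also a root of $p$. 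You instead apply the Galois automorphism $\sigma_k$ directly to the master formula and observe that it fixes $\mathbb{Q}(\omega_3)\ni v_x$; for part~(iii) you replace the polynomial argument by the field intersection $\mathbb{Q}(\omega_m)\cap\mathbb{Q}(\omega_3)=\mathbb{Q}$. Your route is a bit more conceptual and self-contained, in particular avoiding the quoted irreducibility result for $\Phi_{m,3}^1$; the paper's version is more explicit and makes the role of the factorisation $\Phi_m=\Phi_{m,3}^1\Phi_{m,3}^2$ visible.
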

\begin{proof} Let $E_x$ and $E_y$ denote the column vectors of $E$ indexed by $x$ and $y$, respectively, and assume that $u=Ev \in \mathbb{Q}^n$. 
\begin{enumerate}[label=(\roman*)]
\item We use the fact that $\overline{\psi_x(y)}=\psi_{-x}(y)=\psi_x(-y)$ for all $x,y \in \Gamma$. Again
\[u=Ev\Rightarrow E^*u=E^*Ev= (nI_n)v\Rightarrow \frac{1}{n}E^*u=v \in \mathbb{Q}(\omega_3)^n.\]
Thus
 \begin{equation*}
	\begin{split}
v_x=\frac{1}{n}(E^* u)_x=\frac{1}{n} \sum_{a\in \Gamma} E^*_{x,a}u_a = \frac{1}{n} \sum_{a\in \Gamma} \overline{ \psi_a(x)}u_a &= \frac{1}{n} \sum_{a\in \Gamma}  \psi_{a}(-x)u_a \\
&=  \overline{\frac{1}{n} \sum_{a\in \Gamma} \overline{ \psi_{a}(-x)}u_a}\\
&= \overline{\frac{1}{n} \sum_{a\in \Gamma} E^*_{-x,a}u_a}=  \overline{\frac{1}{n}(E^* u)_{-x}}=\overline{v}_{-x}.
	\end{split} 
\end{equation*}

\item If $\Gamma(3) = \emptyset$ then there is nothing to prove. Now assume that $\Gamma(3)\neq\emptyset$.
Let $x,y \in \Gamma(3)$ and $x \approx y$. Then there exists $k\in G_{m,3}^1(1)$ such that $y=x^k$, where $m=\text{ord}(x)$. Assume $x\neq y$, so that $k\geq 2$.
Using Lemma ~\ref{Basic}, entries of $E$ are $m^{th}$ roots of unity. Fix a primitive $m^{th}$ root of unity $\omega$, and express each entry of $E_x$ and $E_y$ in the form $\omega^j$ for some $j\in \{ 0,1,...,m-1\}$. 
Thus $$nv_x= (E^*u)_x= \sum_{j=0}^{m-1} a_j \omega^j,$$ where $a_j\in \mathbb{Q}$ for all $j$. Thus $\omega$ is a root of the polynomial $p(x)= \sum\limits_{j=0}^{m-1} a_j x^j-nv_x \in \mathbb{Q}(\omega_3)[x]$. Therefore $p(x)$ is a multiple of the irreducible polynomial $\Phi_{m,3}^1(x)$, and so $\omega^k$ is also a root of $p(x)$, because of $k\in G_{m,3}^1(1)$.
As $y=x^k$ implies that $\psi_{a}(y)=\psi_{a}(x)^k$ for all $a\in \Gamma$, we have $(E^*u)_y= \sum\limits_{j=0}^{m-1} a_j \omega^{kj}$. Hence 
$$0 =p(\omega^k)=  \sum\limits_{j=0}^{m-1} a_j \omega^{kj}-nv_x= (E^*u)_y -nv_x=nv_y-nv_x \Rightarrow v_x=v_y.$$

\item Let $x\in \Gamma \setminus \Gamma(3)$ and $r=\text{ord}(x) \not\equiv 0 \pmod 3$. Fix a primitive $r^{th}$ root $\omega$ of unity, and express each entry of $E_x$ in the form $\omega^j$ for some $j\in \{ 0,1,...,r-1\}$. 
Thus $$nv_x= (E^*u)_x= \sum_{j=0}^{r-1} a_j \omega^j,$$ where $a_j\in \mathbb{Q}$ for all $j$. Thus $\omega$ is a root of the polynomial $p(x)= \sum\limits_{j=0}^{r-1} a_j x^j-nv_x \in \mathbb{Q}(\omega_3)[x]$. Therefore, $p(x)$ is a multiple of the irreducible polynomial $\Phi_r(x)$, and so $\omega^{-1}$ is also a root of $p(x)$. Since $\psi_{a}(-x)=\psi_{a}(x)^{-1}$ for all $a\in \Gamma$, therefore $(E^*u)_{-x}= \sum\limits_{j=0}^{r-1} a_j \omega^{-j}$. Hence $$0 =p(\omega^{-1})=  \sum\limits_{j=0}^{r-1} a_j \omega^{-j}-nv_x= (E^*u)_{-x} -nv_x=nv_{-x}-nv_x,$$ implies that $v_x=v_{-x}$. This together with Part $(i)$ imply that $\Re(v_x)=\Re(v_{-x})$, and that $\Im(v_x)=\Im(v_{-x})=0$ for all $x\in \Gamma \setminus \Gamma(3)$. \qedhere
\end{enumerate} 
\end{proof}

\begin{theorem}\label{neccori}
Let $\Gamma$ be an abelian group. The oriented Cayley graph  $\text{Cay}(\Gamma, S)$ is HS-integral if and only if $S \in \mathbb{E}(\Gamma)$.
\end{theorem}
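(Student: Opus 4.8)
The converse direction — that $S\in\mathbb{E}(\Gamma)$ forces $\text{Cay}(\Gamma,S)$ to be HS-integral — is precisely Theorem~\ref{OrientedChara}, so the plan is to prove only the forward implication. Suppose $\text{Cay}(\Gamma,S)$ is HS-integral. Since it is an oriented Cayley graph, $S$ is skew-symmetric, so $S\cap S^{-1}=\emptyset$, and I may unambiguously define the vector $v=v_S\in\mathbb{Q}(\omega_3)^n$, with coordinates indexed by $\Gamma$, by $v_a=\omega_6$ if $a\in S$, $v_a=\omega_6^5$ if $a\in -S$, and $v_a=0$ otherwise. Since $E_{x,y}=\psi_x(y)$, the $\alpha$-th coordinate of $Ev$ equals $\sum_{a\in\Gamma}\psi_\alpha(a)v_a=\sum_{s\in S}\bigl(\omega_6\psi_\alpha(s)+\omega_6^5\psi_\alpha(-s)\bigr)=\mu_\alpha$, the corresponding HS-eigenvalue from Corollary~\ref{OriEig}. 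HS-integrality gives $\mu_\alpha\in\mathbb{Z}$ for every $\alpha$, hence $Ev\in\mathbb{Q}^n$, so $v$ satisfies the hypotheses of Lemma~\ref{Ori4Nec}.

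Next I would read off the structure of $S$ from the three conclusions of Lemma~\ref{Ori4Nec}. Part $(iii)$ says that $\Im(v_x)=0$ for every $x\in\Gamma\setminus\Gamma(3)$; but by construction $v_x\in\{0,\omega_6,\omega_6^5\}$, and neither $\omega_6=\frac{1+i\sqrt3}{2}$ nor $\omega_6^5=\frac{1-i\sqrt3}{2}$ is real, so $v_x=0$ there. Therefore $S\subseteq\Gamma(3)$. Part $(ii)$ says $v$ is constant on each $\approx$-equivalence class contained in $\Gamma(3)$; so if $x\in S$ then $v_y=v_x=\omega_6$ for every $y$ with $y\approx x$, i.e. $\langle\!\langle x\rangle\!\rangle\subseteq S$. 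Consequently $S$ is a union of $\approx$-classes inside $\Gamma(3)$; picking one representative $x_j$ from each of the finitely many classes meeting $S$ yields $S=\langle\!\langle x_1\rangle\!\rangle\cup\cdots\cup\langle\!\langle x_k\rangle\!\rangle$. Since $S$ is skew-symmetric, this exhibits $S$ as an element of $\mathbb{E}(\Gamma)$. (When $\Gamma(3)=\emptyset$, the same argument forces $S=\emptyset$, consistent with the convention $\mathbb{E}(\Gamma)=\{\emptyset\}$ and recovering Theorem~\ref{ori4}.)

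The substantive content of the argument is concentrated in Lemma~\ref{Ori4Nec}, whose proof already exploits the irreducibility of $\Phi_{m,3}^1(x)$ over $\mathbb{Q}(\omega_3)$ and the irreducibility of $\Phi_r(x)$; granting that lemma, what remains is short bookkeeping. I do not expect a genuine obstacle here — the one point that needs care is that membership in $\mathbb{E}(\Gamma)$ additionally demands skew-symmetry of $S$, which is automatic because $\text{Cay}(\Gamma,S)$ is assumed oriented, and the degenerate case $\Gamma(3)=\emptyset$ must be checked separately against the stated convention. Part $(i)$ of Lemma~\ref{Ori4Nec} is not even needed, since $\overline{v}_x=v_{-x}$ holds by the very definition of $v_S$.
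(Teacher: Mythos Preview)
Your proposal is correct and follows essentially the same route as the paper: define the vector $v$ supported on $S\cup S^{-1}$ with values $\omega_6,\omega_6^5$, observe that $Ev$ records the HS-eigenvalues and hence lies in $\mathbb{Q}^n$, and then invoke parts $(ii)$ and $(iii)$ of Lemma~\ref{Ori4Nec} to force $S\subseteq\Gamma(3)$ and $S$ a union of $\approx$-classes. The only cosmetic difference is that the paper handles the case $\Gamma(3)=\emptyset$ separately up front via Theorem~\ref{ori4}, whereas you (correctly) note it falls out of the same argument.
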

\begin{proof}
Assume that the oriented Cayley graph  $\text{Cay}(\Gamma, S)$ is integral. If $\Gamma(3) = \emptyset$ then by Theorem ~\ref{ori4}, we have $S = \emptyset$, and so $S \in \mathbb{E}(\Gamma)$. Now assume that $\Gamma(3) \neq \emptyset$. Let $v$ be the vector in $\mathbb{Q}^n(\omega_3)$ whose coordinates are indexed by the elements of $\Gamma$, and the $x^{th}$ coordinate of $v$ is given by
	$$v_x = \left\{ \begin{array}{rl}
		\omega_6 &\mbox{ if }
		x \in S \\ \omega_6^5 & \mbox{ if } x \in S^{-1}\\
		0 &\textnormal{ otherwise.}
	\end{array}\right.$$
We have
\[(Ev)_a=\sum\limits_{x\in \Gamma}E_{a,x}v_x =\sum\limits_{x\in S}\omega_6 E_{a,x}+ \sum\limits_{x\in S^{-1}} \omega_6^5 E_{a,x}=\sum\limits_{x\in S}\left( \omega_6 \psi_a(x)+ \omega_6^5 \psi_a(-x)\right).\]
Thus $(Ev)_a$ is an HS-eigenvalue of the integral oriented Cayley graph $\text{Cay}(\Gamma, S)$ for each $a\in \Gamma$. Therefore $Ev \in \mathbb{Q}^n$, and hence all the three conditions of Lemma ~\ref{Ori4Nec} hold. 
 
By the third condition of Lemma ~\ref{Ori4Nec}, $v_x=0$ for all $x\in \Gamma \setminus\Gamma(3)$, and so we must have $S \cup S^{-1} \subseteq \Gamma(3)$. Again, let  $x \in S$, $y \in \Gamma(3)$ and $ x \approx y$. The second condition of Lemma ~\ref{Ori4Nec} gives $v_x=v_y$, which implies that $ y\in S$. Thus $x \in S$ implies $\langle\!\langle x \rangle\!\rangle \subseteq S$. Hence $S\in \mathbb{E}(\Gamma)$. The converse part follows from Theorem~\ref{OrientedChara}.
\end{proof}

The following example illustrates Theorem~\ref{neccori}.

\begin{ex}\label{ex1} Consider $\Gamma= \mathbb{Z}_3 \times \mathbb{Z}_3$ and $S=\{ (0,1), (2,0)\}$. The oriented graph $\text{Cay}(\mathbb{Z}_3 \times \mathbb{Z}_3, S)$ is shown in Figure~\ref{a}. We see that $\langle\!\langle (0,1) \rangle\!\rangle=\{(0,1)\}$ and $\langle\!\langle (2,0)\rangle\!\rangle=\{(2,0)\}$. Therefore $S \in \mathbb{E}(\Gamma)$. Further, using Corollary~\ref{OriEig}  and Equation~\ref{character}, the HS-eigenvalues of  $\text{Cay}(\mathbb{Z}_3 \times \mathbb{Z}_3, S)$ are obtained as
$$\mu_\alpha= [\omega_6\psi_{\alpha}(0,1) + \omega_6^5 \psi_{\alpha}(0,2)] + [\omega_6 \psi_{\alpha}(2,0) + \omega_6^5 \psi_{\alpha}(1,0)] ~~\text{for each }\alpha \in \mathbb{Z}_3 \times \mathbb{Z}_3 ,$$
where 
$$\psi_{\alpha}(x) =\omega_3^{\alpha_1 x_1}\omega_3^{\alpha_2 x_2}  \text{ for all } \alpha=(\alpha_1,\alpha_2),x=(x_1,x_2)\in \mathbb{Z}_3 \times \mathbb{Z}_3.$$
It can be seen that $\mu_{(0,0)}=2,\mu_{(0,1)}=-1,\mu_{(0,2)}=2,\mu_{(1,0)}=2,\mu_{(1,1)}=-1,\mu_{(1,2)}=2$, $\mu_{(2,0)}=-1$, $\mu_{(2,1)}=-4$ and $\mu_{(2,2)}=-1$. Thus $\text{Cay}(\mathbb{Z}_3 \times \mathbb{Z}_3, S)$ is HS-integral. 
\end{ex}

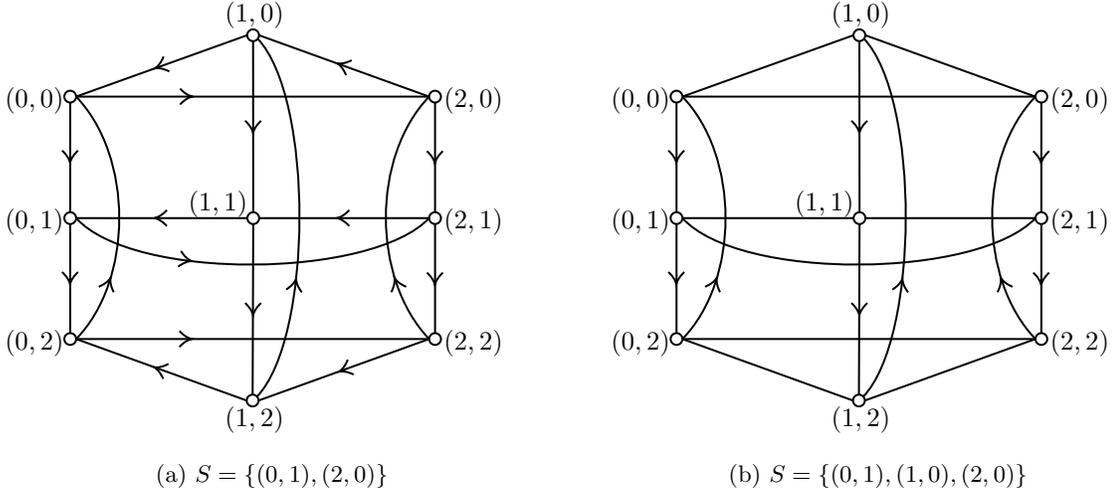
\begin{figure}[ht]
\centering
\hfill
\begin{subfigure}{0.45\textwidth}
%%%%%%%%%%%%
\tikzset{every picture/.style={line width=0.75pt}} %set default line width to 0.75pt        

\begin{tikzpicture}[x=0.23pt,y=0.23pt,yscale=-1,xscale=1]
%uncomment if require: \path (0,881); %set diagram left start at 0, and has height of 881

%Shape: Circle [id:dp5649389439565744] 
\draw  [color={rgb, 255:red, 0; green, 0; blue, 0 }  ,draw opacity=1 ][line width=0.75]  (190.54,200.79) .. controls (190.54,195.43) and (194.89,191.08) .. (200.25,191.08) .. controls (205.61,191.08) and (209.96,195.43) .. (209.96,200.79) .. controls (209.96,206.15) and (205.61,210.5) .. (200.25,210.5) .. controls (194.89,210.5) and (190.54,206.15) .. (190.54,200.79) -- cycle ;
%Straight Lines [id:da39007608281223993] 
\draw [line width=0.75]    (200.25,210.5) -- (200.29,391.04) ;
\draw  [line width=0.75]  (213.15,289.5) .. controls (205.86,296.28) and (201.49,303.06) .. (200.03,309.84) .. controls (198.57,303.06) and (194.19,296.28) .. (186.9,289.5) ;
%Straight Lines [id:da4755725428640679] 
\draw [line width=0.75]    (200.29,410.46) -- (200.29,590.04) ;
%Straight Lines [id:da1074345922220612] 
\draw [line width=0.75]    (800.25,210.5) -- (800.29,391.04) ;
%Straight Lines [id:da13449619740319196] 
\draw [line width=0.75]    (800.29,410.46) -- (800.29,590.04) ;
%Straight Lines [id:da4447324709357606] 
\draw [line width=0.75]    (501,109.46) -- (501.25,391.08) ;
%Straight Lines [id:da24583206594515716] 
\draw [line width=0.75]    (501.25,410.5) -- (500.25,691.29) ;
%Curve Lines [id:da9607933003923276] 
\draw    (209.96,200.79) .. controls (304.54,299.46) and (304.58,502.42) .. (210,599.75) ;
%Curve Lines [id:da6080831955417222] 
\draw    (790.58,599.75) .. controls (695.67,500.42) and (695.62,299.46) .. (790.54,200.79) ;
%Curve Lines [id:da4288403910260392] 
\draw    (508.58,105.42) .. controls (601.17,204.08) and (599.58,597.42) .. (507.58,694.42) ;
%Straight Lines [id:da6309463403307887] 
\draw    (209.96,200.79) -- (491.29,99.75) ;
%Straight Lines [id:da7936367682237878] 
\draw    (210,400.75) -- (491.54,400.79) ;
%Straight Lines [id:da896388090066295] 
\draw    (210,599.75) -- (490.54,701) ;
%Straight Lines [id:da6867773497059383] 
\draw    (510.71,99.75) -- (790.54,200.79) ;
%Straight Lines [id:da5603949865399293] 
\draw    (510.96,400.79) -- (790.58,400.75) ;
%Straight Lines [id:da4612214230138223] 
\draw    (509.96,701) -- (790.58,599.75) ;
%Straight Lines [id:da8526029689342586] 
\draw    (209.96,200.79) -- (790.54,200.79) ;
%Straight Lines [id:da38889665940230267] 
\draw    (210,599.75) -- (790.58,599.75) ;
%Curve Lines [id:da2042714182337172] 
\draw    (210,400.75) .. controls (304.58,505) and (696.67,500.42) .. (790.58,400.75) ;
%Shape: Circle [id:dp5984437433587878] 
\draw  [color={rgb, 255:red, 0; green, 0; blue, 0 }  ,draw opacity=1 ][line width=0.75]  (190.58,400.75) .. controls (190.58,395.39) and (194.93,391.04) .. (200.29,391.04) .. controls (205.65,391.04) and (210,395.39) .. (210,400.75) .. controls (210,406.11) and (205.65,410.46) .. (200.29,410.46) .. controls (194.93,410.46) and (190.58,406.11) .. (190.58,400.75) -- cycle ;
%Shape: Circle [id:dp38297586723599186] 
\draw  [color={rgb, 255:red, 0; green, 0; blue, 0 }  ,draw opacity=1 ][line width=0.75]  (190.58,599.75) .. controls (190.58,594.39) and (194.93,590.04) .. (200.29,590.04) .. controls (205.65,590.04) and (210,594.39) .. (210,599.75) .. controls (210,605.11) and (205.65,609.46) .. (200.29,609.46) .. controls (194.93,609.46) and (190.58,605.11) .. (190.58,599.75) -- cycle ;
%Shape: Circle [id:dp44512969961035453] 
\draw  [color={rgb, 255:red, 0; green, 0; blue, 0 }  ,draw opacity=1 ][line width=0.75]  (790.54,200.79) .. controls (790.54,195.43) and (794.89,191.08) .. (800.25,191.08) .. controls (805.61,191.08) and (809.96,195.43) .. (809.96,200.79) .. controls (809.96,206.15) and (805.61,210.5) .. (800.25,210.5) .. controls (794.89,210.5) and (790.54,206.15) .. (790.54,200.79) -- cycle ;
%Shape: Circle [id:dp12335343506856333] 
\draw  [color={rgb, 255:red, 0; green, 0; blue, 0 }  ,draw opacity=1 ][line width=0.75]  (790.58,400.75) .. controls (790.58,395.39) and (794.93,391.04) .. (800.29,391.04) .. controls (805.65,391.04) and (810,395.39) .. (810,400.75) .. controls (810,406.11) and (805.65,410.46) .. (800.29,410.46) .. controls (794.93,410.46) and (790.58,406.11) .. (790.58,400.75) -- cycle ;
%Shape: Circle [id:dp3688220825416466] 
\draw  [color={rgb, 255:red, 0; green, 0; blue, 0 }  ,draw opacity=1 ][line width=0.75]  (790.58,599.75) .. controls (790.58,594.39) and (794.93,590.04) .. (800.29,590.04) .. controls (805.65,590.04) and (810,594.39) .. (810,599.75) .. controls (810,605.11) and (805.65,609.46) .. (800.29,609.46) .. controls (794.93,609.46) and (790.58,605.11) .. (790.58,599.75) -- cycle ;
%Shape: Circle [id:dp2236736650226432] 
\draw  [color={rgb, 255:red, 0; green, 0; blue, 0 }  ,draw opacity=1 ][line width=0.75]  (491.29,99.75) .. controls (491.29,94.39) and (495.64,90.04) .. (501,90.04) .. controls (506.36,90.04) and (510.71,94.39) .. (510.71,99.75) .. controls (510.71,105.11) and (506.36,109.46) .. (501,109.46) .. controls (495.64,109.46) and (491.29,105.11) .. (491.29,99.75) -- cycle ;
%Shape: Circle [id:dp4041590837874558] 
\draw  [color={rgb, 255:red, 0; green, 0; blue, 0 }  ,draw opacity=1 ][line width=0.75]  (491.54,400.79) .. controls (491.54,395.43) and (495.89,391.08) .. (501.25,391.08) .. controls (506.61,391.08) and (510.96,395.43) .. (510.96,400.79) .. controls (510.96,406.15) and (506.61,410.5) .. (501.25,410.5) .. controls (495.89,410.5) and (491.54,406.15) .. (491.54,400.79) -- cycle ;
%Shape: Circle [id:dp4152388130374045] 
\draw  [color={rgb, 255:red, 0; green, 0; blue, 0 }  ,draw opacity=1 ][line width=0.75]  (490.54,701) .. controls (490.54,695.64) and (494.89,691.29) .. (500.25,691.29) .. controls (505.61,691.29) and (509.96,695.64) .. (509.96,701) .. controls (509.96,706.36) and (505.61,710.71) .. (500.25,710.71) .. controls (494.89,710.71) and (490.54,706.36) .. (490.54,701) -- cycle ;
\draw  [line width=0.75]  (213.15,489.5) .. controls (205.86,496.28) and (201.49,503.06) .. (200.03,509.84) .. controls (198.57,503.06) and (194.19,496.28) .. (186.9,489.5) ;
\draw  [line width=0.75]  (247.21,513.74) .. controls (256.39,509.86) and (262.81,504.98) .. (266.51,499.12) .. controls (265.56,505.98) and (267.36,513.85) .. (271.88,522.72) ;
\draw  [line width=0.75]  (728.17,522.72) .. controls (732.71,513.85) and (734.49,505.98) .. (733.55,499.12) .. controls (737.24,504.99) and (743.67,509.86) .. (752.84,513.74) ;
\draw  [line width=0.75]  (514.15,240.5) .. controls (506.86,247.28) and (502.49,254.06) .. (501.03,260.84) .. controls (499.57,254.06) and (495.19,247.28) .. (487.9,240.5) ;
\draw  [line width=0.75]  (514.15,540.5) .. controls (506.86,547.28) and (502.49,554.06) .. (501.03,560.84) .. controls (499.57,554.06) and (495.19,547.28) .. (487.9,540.5) ;
\draw  [line width=0.75]  (555.33,518.41) .. controls (563.7,513) and (569.18,507.08) .. (571.79,500.66) .. controls (572.05,507.58) and (575.19,515.02) .. (581.19,522.97) ;
\draw  [line width=0.75]  (363.2,159.32) .. controls (354.27,154.92) and (346.38,153.24) .. (339.52,154.29) .. controls (345.34,150.52) and (350.12,144.01) .. (353.86,134.79) ;
\draw  [line width=0.75]  (653.96,165.53) .. controls (650.16,156.32) and (645.34,149.85) .. (639.5,146.11) .. controls (646.36,147.12) and (654.24,145.39) .. (663.15,140.93) ;
\draw  [line width=0.75]  (380.86,187.55) .. controls (387.64,194.84) and (394.42,199.21) .. (401.2,200.67) .. controls (394.42,202.13) and (387.64,206.51) .. (380.86,213.8) ;
\draw  [line width=0.75]  (382.42,454.91) .. controls (387.76,463.32) and (393.63,468.85) .. (400.03,471.53) .. controls (393.1,471.72) and (385.63,474.79) .. (377.63,480.72) ;
\draw  [line width=0.75]  (379.86,586.55) .. controls (386.64,593.84) and (393.42,598.21) .. (400.2,599.67) .. controls (393.42,601.13) and (386.64,605.51) .. (379.86,612.8) ;
\draw  [line width=0.75]  (353.3,665.42) .. controls (349.3,656.29) and (344.34,649.93) .. (338.43,646.32) .. controls (345.31,647.18) and (353.15,645.28) .. (361.95,640.63) ;
\draw  [line width=0.75]  (666.29,658.17) .. controls (657.31,653.86) and (649.4,652.27) .. (642.56,653.39) .. controls (648.34,649.55) and (653.05,643) .. (656.7,633.74) ;
\draw  [line width=0.75]  (359.2,413.8) .. controls (352.42,406.5) and (345.64,402.13) .. (338.86,400.67) .. controls (345.64,399.21) and (352.42,394.83) .. (359.2,387.55) ;
\draw  [line width=0.75]  (659.2,413.8) .. controls (652.42,406.5) and (645.64,402.13) .. (638.86,400.67) .. controls (645.64,399.21) and (652.42,394.83) .. (659.2,387.55) ;
\draw  [line width=0.75]  (813.15,290.5) .. controls (805.86,297.28) and (801.49,304.06) .. (800.03,310.84) .. controls (798.57,304.06) and (794.19,297.28) .. (786.9,290.5) ;
\draw  [line width=0.75]  (813.15,490.5) .. controls (805.86,497.28) and (801.49,504.06) .. (800.03,510.84) .. controls (798.57,504.06) and (794.19,497.28) .. (786.9,490.5) ;

% Text Node
\draw (88,180) node [anchor=north west][inner sep=0.75pt]    {$( 0,0)$};
% Text Node
\draw (88,380) node [anchor=north west][inner sep=0.75pt]    {$( 0,1)$};
% Text Node
\draw (88,580) node [anchor=north west][inner sep=0.75pt]    {$( 0,2)$};
% Text Node
\draw (810,180) node [anchor=north west][inner sep=0.75pt]    {$( 2,0)$};
% Text Node
\draw (810,380) node [anchor=north west][inner sep=0.75pt]    {$( 2,1)$};
% Text Node
\draw (810,580) node [anchor=north west][inner sep=0.75pt]    {$( 2,2)$};
% Text Node
\draw (450,40) node [anchor=north west][inner sep=0.75pt]    {$( 1,0)$};
% Text Node
\draw (390,350) node [anchor=north west][inner sep=0.75pt]    {$( 1,1)$};
% Text Node
\draw (450,706.4) node [anchor=north west][inner sep=0.75pt]    {$( 1,2)$};
\end{tikzpicture}
%%%%%%%%%%%%
\caption{$S=\{ (0,1), (2,0)\}$}\label{a}
\end{subfigure}
\hfill
\begin{subfigure}{0.45\textwidth}
%%%%%%%%%%%%{0.45\textwidth}

\tikzset{every picture/.style={line width=0.75pt}} %set default line width to 0.75pt        
\begin{tikzpicture}[x=0.23pt,y=0.23pt,yscale=-1,xscale=1]
%uncomment if require: \path (0,881); %set diagram left start at 0, and has height of 881

%Shape: Circle [id:dp07601816505995984] 
\draw  [color={rgb, 255:red, 0; green, 0; blue, 0 }  ,draw opacity=1 ][line width=0.75]  (190.54,200.79) .. controls (190.54,195.43) and (194.89,191.08) .. (200.25,191.08) .. controls (205.61,191.08) and (209.96,195.43) .. (209.96,200.79) .. controls (209.96,206.15) and (205.61,210.5) .. (200.25,210.5) .. controls (194.89,210.5) and (190.54,206.15) .. (190.54,200.79) -- cycle ;
%Straight Lines [id:da14728223878100732] 
\draw [line width=0.75]    (200.25,210.5) -- (200.29,391.04) ;
\draw  [line width=0.75]  (213.15,289.5) .. controls (205.86,296.28) and (201.49,303.06) .. (200.03,309.84) .. controls (198.57,303.06) and (194.19,296.28) .. (186.9,289.5) ;
%Straight Lines [id:da7291099293666978] 
\draw [line width=0.75]    (200.29,410.46) -- (200.29,590.04) ;
%Straight Lines [id:da553140247802239] 
\draw [line width=0.75]    (800.25,210.5) -- (800.29,391.04) ;
%Straight Lines [id:da4553792106199902] 
\draw [line width=0.75]    (800.29,410.46) -- (800.29,590.04) ;
%Straight Lines [id:da6086347940565776] 
\draw [line width=0.75]    (501,109.46) -- (501.25,391.08) ;
%Straight Lines [id:da5892571741867555] 
\draw [line width=0.75]    (501.25,410.5) -- (500.25,691.29) ;
%Curve Lines [id:da4684914747463538] 
\draw    (209.96,200.79) .. controls (304.54,299.46) and (304.58,502.42) .. (210,599.75) ;
%Curve Lines [id:da37249125227535085] 
\draw    (790.58,599.75) .. controls (695.67,500.42) and (695.62,299.46) .. (790.54,200.79) ;
%Curve Lines [id:da23905526148469014] 
\draw    (508.58,105.42) .. controls (601.17,204.08) and (599.58,597.42) .. (507.58,694.42) ;
%Straight Lines [id:da9759743574039341] 
\draw    (209.96,200.79) -- (491.29,99.75) ;
%Straight Lines [id:da6276965519060588] 
\draw    (210,400.75) -- (491.54,400.79) ;
%Straight Lines [id:da0885404102317503] 
\draw    (210,599.75) -- (490.54,701) ;
%Straight Lines [id:da9406118928146764] 
\draw    (510.71,99.75) -- (790.54,200.79) ;
%Straight Lines [id:da6659357727709604] 
\draw    (510.96,400.79) -- (790.58,400.75) ;
%Straight Lines [id:da17324524331444946] 
\draw    (509.96,701) -- (790.58,599.75) ;
%Straight Lines [id:da4814407708885662] 
\draw    (209.96,200.79) -- (790.54,200.79) ;
%Straight Lines [id:da8623549412167487] 
\draw    (210,599.75) -- (790.58,599.75) ;
%Curve Lines [id:da7110698563395764] 
\draw    (210,400.75) .. controls (304.58,505) and (696.67,500.42) .. (790.58,400.75) ;
%Shape: Circle [id:dp2957579212504431] 
\draw  [color={rgb, 255:red, 0; green, 0; blue, 0 }  ,draw opacity=1 ][line width=0.75]  (190.58,400.75) .. controls (190.58,395.39) and (194.93,391.04) .. (200.29,391.04) .. controls (205.65,391.04) and (210,395.39) .. (210,400.75) .. controls (210,406.11) and (205.65,410.46) .. (200.29,410.46) .. controls (194.93,410.46) and (190.58,406.11) .. (190.58,400.75) -- cycle ;
%Shape: Circle [id:dp6249709395517228] 
\draw  [color={rgb, 255:red, 0; green, 0; blue, 0 }  ,draw opacity=1 ][line width=0.75]  (190.58,599.75) .. controls (190.58,594.39) and (194.93,590.04) .. (200.29,590.04) .. controls (205.65,590.04) and (210,594.39) .. (210,599.75) .. controls (210,605.11) and (205.65,609.46) .. (200.29,609.46) .. controls (194.93,609.46) and (190.58,605.11) .. (190.58,599.75) -- cycle ;
%Shape: Circle [id:dp5506643376516135] 
\draw  [color={rgb, 255:red, 0; green, 0; blue, 0 }  ,draw opacity=1 ][line width=0.75]  (790.54,200.79) .. controls (790.54,195.43) and (794.89,191.08) .. (800.25,191.08) .. controls (805.61,191.08) and (809.96,195.43) .. (809.96,200.79) .. controls (809.96,206.15) and (805.61,210.5) .. (800.25,210.5) .. controls (794.89,210.5) and (790.54,206.15) .. (790.54,200.79) -- cycle ;
%Shape: Circle [id:dp08615234516689108] 
\draw  [color={rgb, 255:red, 0; green, 0; blue, 0 }  ,draw opacity=1 ][line width=0.75]  (790.58,400.75) .. controls (790.58,395.39) and (794.93,391.04) .. (800.29,391.04) .. controls (805.65,391.04) and (810,395.39) .. (810,400.75) .. controls (810,406.11) and (805.65,410.46) .. (800.29,410.46) .. controls (794.93,410.46) and (790.58,406.11) .. (790.58,400.75) -- cycle ;
%Shape: Circle [id:dp7719597627167687] 
\draw  [color={rgb, 255:red, 0; green, 0; blue, 0 }  ,draw opacity=1 ][line width=0.75]  (790.58,599.75) .. controls (790.58,594.39) and (794.93,590.04) .. (800.29,590.04) .. controls (805.65,590.04) and (810,594.39) .. (810,599.75) .. controls (810,605.11) and (805.65,609.46) .. (800.29,609.46) .. controls (794.93,609.46) and (790.58,605.11) .. (790.58,599.75) -- cycle ;
%Shape: Circle [id:dp9242555852182863] 
\draw  [color={rgb, 255:red, 0; green, 0; blue, 0 }  ,draw opacity=1 ][line width=0.75]  (491.29,99.75) .. controls (491.29,94.39) and (495.64,90.04) .. (501,90.04) .. controls (506.36,90.04) and (510.71,94.39) .. (510.71,99.75) .. controls (510.71,105.11) and (506.36,109.46) .. (501,109.46) .. controls (495.64,109.46) and (491.29,105.11) .. (491.29,99.75) -- cycle ;
%Shape: Circle [id:dp9327731753787257] 
\draw  [color={rgb, 255:red, 0; green, 0; blue, 0 }  ,draw opacity=1 ][line width=0.75]  (491.54,400.79) .. controls (491.54,395.43) and (495.89,391.08) .. (501.25,391.08) .. controls (506.61,391.08) and (510.96,395.43) .. (510.96,400.79) .. controls (510.96,406.15) and (506.61,410.5) .. (501.25,410.5) .. controls (495.89,410.5) and (491.54,406.15) .. (491.54,400.79) -- cycle ;
%Shape: Circle [id:dp7215986110096673] 
\draw  [color={rgb, 255:red, 0; green, 0; blue, 0 }  ,draw opacity=1 ][line width=0.75]  (490.54,701) .. controls (490.54,695.64) and (494.89,691.29) .. (500.25,691.29) .. controls (505.61,691.29) and (509.96,695.64) .. (509.96,701) .. controls (509.96,706.36) and (505.61,710.71) .. (500.25,710.71) .. controls (494.89,710.71) and (490.54,706.36) .. (490.54,701) -- cycle ;
\draw  [line width=0.75]  (213.15,489.5) .. controls (205.86,496.28) and (201.49,503.06) .. (200.03,509.84) .. controls (198.57,503.06) and (194.19,496.28) .. (186.9,489.5) ;
\draw  [line width=0.75]  (247.21,513.74) .. controls (256.39,509.86) and (262.81,504.98) .. (266.51,499.12) .. controls (265.56,505.98) and (267.36,513.85) .. (271.88,522.72) ;
\draw  [line width=0.75]  (728.17,522.72) .. controls (732.71,513.85) and (734.49,505.98) .. (733.55,499.12) .. controls (737.24,504.99) and (743.67,509.86) .. (752.84,513.74) ;
\draw  [line width=0.75]  (514.15,240.5) .. controls (506.86,247.28) and (502.49,254.06) .. (501.03,260.84) .. controls (499.57,254.06) and (495.19,247.28) .. (487.9,240.5) ;
\draw  [line width=0.75]  (514.15,540.5) .. controls (506.86,547.28) and (502.49,554.06) .. (501.03,560.84) .. controls (499.57,554.06) and (495.19,547.28) .. (487.9,540.5) ;
\draw  [line width=0.75]  (555.33,518.41) .. controls (563.7,513) and (569.18,507.08) .. (571.79,500.66) .. controls (572.05,507.58) and (575.19,515.02) .. (581.19,522.97) ;
\draw  [line width=0.75]  (813.15,290.5) .. controls (805.86,297.28) and (801.49,304.06) .. (800.03,310.84) .. controls (798.57,304.06) and (794.19,297.28) .. (786.9,290.5) ;
\draw  [line width=0.75]  (813.15,490.5) .. controls (805.86,497.28) and (801.49,504.06) .. (800.03,510.84) .. controls (798.57,504.06) and (794.19,497.28) .. (786.9,490.5) ;

% Text Node
\draw (88,180) node [anchor=north west][inner sep=0.75pt]    {$( 0,0)$};
% Text Node
\draw (88,380) node [anchor=north west][inner sep=0.75pt]    {$( 0,1)$};
% Text Node
\draw (88,580) node [anchor=north west][inner sep=0.75pt]    {$( 0,2)$};
% Text Node
\draw (810,180) node [anchor=north west][inner sep=0.75pt]    {$( 2,0)$};
% Text Node
\draw (810,380) node [anchor=north west][inner sep=0.75pt]    {$( 2,1)$};
% Text Node
\draw (810,580) node [anchor=north west][inner sep=0.75pt]    {$( 2,2)$};
% Text Node
\draw (450,40) node [anchor=north west][inner sep=0.75pt]    {$( 1,0)$};
% Text Node
\draw (390,350) node [anchor=north west][inner sep=0.75pt]    {$( 1,1)$};
% Text Node
\draw (450,706.4) node [anchor=north west][inner sep=0.75pt]    {$( 1,2)$};
\end{tikzpicture}
%%%%%%%%%%%%
\caption{$S=\{ (0,1),(1,0), (2,0)\}$} \label{b}
\end{subfigure}
\caption{The graph $\text{Cay}(\mathbb{Z}_3 \times \mathbb{Z}_3, S)$}\label{main}
\end{figure}
%%%%%%%%%%%%%%%%%%%%%%%%%%%%%%%%%%%%%%%%%%%%%%%%%%%%%%%%%%%%%%%%%%%%%%%%%%%%%%%%%%%%%%%%%%%%%%%%%%%%%%%%%%%%%%%%%%%%%%%%%%%%%%%%%%%%%%%%%%%%%%%%%%%%%%%%%%%%%%%%%%%%%%%%%%%%%%%%%%%%%%%%%%%%%%%%

\begin{lema}\label{CharaNewIntegSum} Let $S$ be a skew-symmetric subset of an abelian group $\Gamma$ and $t(\neq 0) \in \mathbb{Q}$. If \linebreak[4] $\sum\limits_{s\in S}it\sqrt{3} (\psi_{\alpha}(s)- \psi_{\alpha}(-s))$ is an integer $ \textnormal{ for each } \alpha \in \Gamma$ then $S \in \mathbb{E}(\Gamma)$
	\end{lema}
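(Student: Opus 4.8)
The plan is to follow the proof of Theorem~\ref{neccori} almost verbatim, with the weights $\omega_6,\omega_6^5$ replaced by $it\sqrt3$ and $-it\sqrt3$. First I would record that $i\sqrt3=2\omega_3+1$, so $it\sqrt3\in\mathbb{Q}(\omega_3)$ for every $t\in\mathbb{Q}$; this is exactly what makes Lemma~\ref{Ori4Nec} applicable. Since $S$ is skew-symmetric we have $S\cap S^{-1}=\emptyset$, so we may define a vector $v\in\mathbb{Q}(\omega_3)^n$ with coordinates indexed by $\Gamma$ by
$$v_x=\begin{cases} it\sqrt3 & \text{if } x\in S,\\ -it\sqrt3 & \text{if } x\in S^{-1},\\ 0 & \text{otherwise.}\end{cases}$$
Using $E_{a,x}=\psi_a(x)$ and the substitution $x=-s$ in the sum over $S^{-1}$, one computes $(Ev)_a=\sum_{s\in S} it\sqrt3\,(\psi_a(s)-\psi_a(-s))$, which is an integer for every $a\in\Gamma$ by hypothesis. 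Hence $Ev\in\mathbb{Q}^n$, and all three conclusions of Lemma~\ref{Ori4Nec} hold for $v$.

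I would then read off $S\in\mathbb{E}(\Gamma)$ from those conclusions. Every entry of $v$ is purely imaginary, so the condition $\Im(v_x)=0$ in part $(iii)$ forces $v_x=0$ for all $x\in\Gamma\setminus\Gamma(3)$; therefore $S\cup S^{-1}\subseteq\Gamma(3)$, i.e. $S\subseteq\Gamma(3)$. If $\Gamma(3)=\emptyset$ this already gives $S=\emptyset\in\mathbb{E}(\Gamma)$, so assume $\Gamma(3)\neq\emptyset$. For $x\in S$ and $y\in\Gamma(3)$ with $x\approx y$, part $(ii)$ gives $v_y=v_x=it\sqrt3\neq0$; since $t\neq0$ we have $it\sqrt3\neq-it\sqrt3$, so $y\notin S^{-1}$, whence $y\in S$. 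Thus $x\in S$ implies $\langle\!\langle x\rangle\!\rangle\subseteq S$, so $S=\bigcup_{x\in S}\langle\!\langle x\rangle\!\rangle$ is a skew-symmetric union of $\approx$-classes of elements of $\Gamma(3)$, that is, $S\in\mathbb{E}(\Gamma)$.

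I do not expect a substantive obstacle here: the real content is packaged inside Lemma~\ref{Ori4Nec}. The only points needing a little care are (a) checking $it\sqrt3\in\mathbb{Q}(\omega_3)$ so that Lemma~\ref{Ori4Nec} applies; (b) exploiting that the coordinates of $v$ are purely imaginary so that the vanishing of $\Im(v_x)$ upgrades to the vanishing of $v_x$ itself outside $\Gamma(3)$; and (c) using the hypothesis $t\neq0$ to keep $S$ and $S^{-1}$ separated when deducing $y\in S$ from $v_y\neq0$. Part $(i)$ of Lemma~\ref{Ori4Nec} is automatically consistent with the definition of $v$ (as $\overline{it\sqrt3}=-it\sqrt3$) and is not needed on its own.
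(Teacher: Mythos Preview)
Your proof is correct and follows essentially the same approach as the paper's own proof: define the purely imaginary vector $v$ supported on $S\cup S^{-1}$, apply Lemma~\ref{Ori4Nec} to conclude $S\cup S^{-1}\subseteq\Gamma(3)$ from part~(iii) and $\langle\!\langle x\rangle\!\rangle\subseteq S$ from part~(ii). Your write-up is in fact slightly more careful than the paper's, since you explicitly justify $it\sqrt3\in\mathbb{Q}(\omega_3)$ and explain why $v_y=it\sqrt3$ rules out $y\in S^{-1}$ (using $t\neq0$).
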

	\begin{proof}
Let $v$ be the vector, whose coordinates are indexed by the elements of $\Gamma$, defined by
			$$v_x= \left\{ \begin{array}{cl}
				it\sqrt{3} & \mbox{if }  x\in S \\
				-it\sqrt{3} & \mbox{if }  x\in S^{-1}\\ 
				0 &  \mbox{otherwise}. 
			\end{array}\right.$$ 
Since $v\in \mathbb{Q}(\omega_3)^n$ and  $\alpha$-th coordinate of $Ev$ is $\sum\limits_{s\in S} it\sqrt{3} ( \psi_{\alpha}(s)-\psi_{\alpha}(-s))$, we have $Ev \in \mathbb{Q}^n$. By the third condition of Lemma ~\ref{Ori4Nec}, $\Im(v_x)=0$, and so $v_x=0$ for all $x\in \Gamma \setminus\Gamma(3)$. Thus we must have $S \cup S^{-1} \subseteq \Gamma(3)$. Again, let  $x \in S$, $y \in \Gamma(3)$ and $ x \approx y$. The second condition of Lemma ~\ref{Ori4Nec} gives $v_x=v_y$, which implies that $ y\in S$. Thus $x \in S$ implies $\langle\!\langle x \rangle\!\rangle \subseteq S$. Hence $S\in \mathbb{E}(\Gamma)$. 
	\end{proof}
	
		\begin{lema}\label{Sqrt3NecessIntSum} Let $S$ be a skew-symmetric subset of an abelian group $\Gamma$ and $t(\neq 0) \in \mathbb{Q}$. If \linebreak[4] $\sum\limits_{s\in S}it\sqrt{3} ( \psi_{\alpha}(s)- \psi_{\alpha}(-s))$ is an integer for each $ \alpha \in \Gamma$ then $\sum\limits_{s\in S\cup S^{-1}} \psi_{\alpha}(s) $ is an integer for each $ \alpha \in \Gamma$.
	\end{lema}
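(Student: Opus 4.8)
The plan is to reduce this statement to the known characterization of integral \emph{simple} Cayley graphs over abelian groups, Theorem~\ref{Cayint}, by first showing that the hypothesis forces $S$ into $\mathbb{E}(\Gamma)$ and then that $S\cup S^{-1}$ lies in the Boolean algebra $\mathbb{B}(\Gamma)$. The first step is immediate: the hypothesis of the present lemma is literally the hypothesis of Lemma~\ref{CharaNewIntegSum}, so I would simply invoke that lemma to conclude $S\in\mathbb{E}(\Gamma)$. If $\Gamma(3)=\emptyset$, then $\mathbb{E}(\Gamma)=\{\emptyset\}$, hence $S=\emptyset$, $S\cup S^{-1}=\emptyset$, and $\sum_{s\in S\cup S^{-1}}\psi_{\alpha}(s)=0$ for every $\alpha$, so the conclusion holds trivially. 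Thus I may assume $\Gamma(3)\neq\emptyset$ and write $S=\langle\!\langle x_1 \rangle\!\rangle\cup\cdots\cup\langle\!\langle x_k \rangle\!\rangle$ for some $x_1,\dots,x_k\in\Gamma(3)$.

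The second step is to identify $S\cup S^{-1}$ with a union of atoms of $\mathbb{B}(\Gamma)$. The key observation is that $\langle\!\langle x \rangle\!\rangle^{-1}=\langle\!\langle -x \rangle\!\rangle$ for every $x\in\Gamma(3)$: writing $m=\text{ord}(x)$ and using part $(i)$ of Lemma~\ref{lemanecc}, each element of $\langle\!\langle x \rangle\!\rangle$ has the form $x^k$ with $k\in G_{m,3}^1(1)$, and $-x^k=x^{m-k}$ where $\gcd(m-k,m)=1$ and $m-k\equiv 2\pmod 3$ since $m\equiv 0\pmod 3$; as $k\mapsto m-k$ is a bijection $G_{m,3}^1(1)\to G_{m,3}^2(1)$, part $(ii)$ of Lemma~\ref{lemanecc} gives $\langle\!\langle x \rangle\!\rangle^{-1}=\{x^{j}:j\in G_{m,3}^2(1)\}=\langle\!\langle -x \rangle\!\rangle$. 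Combining this with part $(iv)$ of Lemma~\ref{lemanecc}, namely $[x]=\langle\!\langle x \rangle\!\rangle\cup\langle\!\langle -x \rangle\!\rangle$, I obtain
\[
S\cup S^{-1}=\bigcup_{j=1}^{k}\langle\!\langle x_j \rangle\!\rangle\ \cup\ \bigcup_{j=1}^{k}\langle\!\langle x_j \rangle\!\rangle^{-1}=\bigcup_{j=1}^{k}\bigl(\langle\!\langle x_j \rangle\!\rangle\cup\langle\!\langle -x_j \rangle\!\rangle\bigr)=\bigcup_{j=1}^{k}[x_j],
\]
which is a union of atoms of $\mathbb{B}(\Gamma)$ by Lemma~\ref{atomsboolean}, and hence $S\cup S^{-1}\in\mathbb{B}(\Gamma)$.

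Finally, Theorem~\ref{Cayint} applies to the symmetric set $S\cup S^{-1}$: the simple Cayley graph $\text{Cay}(\Gamma,S\cup S^{-1})$ is integral, and by Corollary~\ref{simpleAbelianEigBabai} its spectrum is exactly $\bigl\{\sum_{s\in S\cup S^{-1}}\psi_{\alpha}(s):\alpha\in\Gamma\bigr\}$; therefore $\sum_{s\in S\cup S^{-1}}\psi_{\alpha}(s)\in\mathbb{Z}$ for every $\alpha\in\Gamma$, which is the assertion. I do not expect a genuine obstacle here, since the statement is essentially a bookkeeping consequence of the results already established; the only point requiring a little care is the verification of the identity $\langle\!\langle x \rangle\!\rangle^{-1}=\langle\!\langle -x \rangle\!\rangle$ (equivalently, that a skew-symmetric $S\in\mathbb{E}(\Gamma)$ really does close up to an element of $\mathbb{B}(\Gamma)$ after adjoining inverses), together with the separate, trivial treatment of the degenerate case $\Gamma(3)=\emptyset$.
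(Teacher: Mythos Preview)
Your proposal is correct and follows essentially the same route as the paper's proof: invoke Lemma~\ref{CharaNewIntegSum} to get $S\in\mathbb{E}(\Gamma)$, use Lemma~\ref{lemanecc} to conclude $S\cup S^{-1}=[x_1]\cup\cdots\cup[x_k]\in\mathbb{B}(\Gamma)$, and then apply Theorem~\ref{Cayint}. Your treatment is slightly more explicit (you spell out $\langle\!\langle x \rangle\!\rangle^{-1}=\langle\!\langle -x \rangle\!\rangle$ and separate the degenerate case $\Gamma(3)=\emptyset$), but the argument is the same.
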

	\begin{proof}
Assume that $\sum\limits_{s\in S}it\sqrt{3} (\psi_{\alpha}(s)-\psi_{\alpha}(-s))$ is an integer for each $ \alpha \in \Gamma$. By Lemma \ref{CharaNewIntegSum} we have $S \in \mathbb{E}(\Gamma)$, and so $S=\langle\!\langle x_1 \rangle\!\rangle\cup...\cup \langle\!\langle x_k \rangle\!\rangle$ for some $x_1,...,x_k\in \Gamma(3)$. Therefore, using Lemma~\ref{lemanecc} we get $S \cup S^{-1}=[x_1] \cup ...\cup[x_k] \in \mathbb{B}(\Gamma)$. Thus by Theorem~\ref{Cayint}, $\text{Cay}(\Gamma, S \cup S^{-1})$ is integral, that is, $\sum\limits_{s\in S\cup S^{-1}} \psi_{\alpha}(s) $ is an integer for each $ \alpha \in \Gamma$. 
	\end{proof}

	\begin{lema}\label{SeperatIntegMixedGraph}
Let $\Gamma$ be an abelian group. The mixed Cayley graph $\text{Cay}(\Gamma,S)$ is HS-integral if and only if  $\text{Cay}(\Gamma,{S\setminus \overline{S}})$ is integral and $\text{Cay}(\Gamma, {\overline{S}})$ are HS-integral.
	\end{lema}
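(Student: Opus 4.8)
The plan is to read everything off Lemma~\ref{imcgoa3} and then feed the connection function into Lemma~\ref{Ori4Nec}. By Lemma~\ref{imcgoa3} the HS-spectrum of $\text{Cay}(\Gamma,S)$ is $\{\gamma_\alpha=\lambda_\alpha+\mu_\alpha:\alpha\in\Gamma\}$, where by Corollary~\ref{simpleAbelianEigBabai} the numbers $\lambda_\alpha=\sum_{s\in S\setminus\overline{S}}\psi_\alpha(s)$ are exactly the eigenvalues of the simple Cayley graph $\text{Cay}(\Gamma,S\setminus\overline{S})$, and by Corollary~\ref{OriEig} the numbers $\mu_\alpha=\sum_{s\in\overline{S}}(\omega_6\psi_\alpha(s)+\omega_6^5\psi_\alpha(-s))$ are exactly the HS-eigenvalues of the oriented Cayley graph $\text{Cay}(\Gamma,\overline{S})$. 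The ``if'' direction is then immediate: if every $\lambda_\alpha$ and every $\mu_\alpha$ lies in $\mathbb{Z}$, then so does $\gamma_\alpha=\lambda_\alpha+\mu_\alpha$, and hence $\text{Cay}(\Gamma,S)$ is HS-integral.

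For the ``only if'' direction I would proceed as follows. Assume every $\gamma_\alpha\in\mathbb{Z}$, and let $v\in\mathbb{Q}(\omega_3)^n$ be the connection function $f_S$ from the proof of Lemma~\ref{imcgoa3}, i.e. the vector with coordinates indexed by $\Gamma$ given by $v_x=1$ for $x\in S\setminus\overline{S}$, $v_x=\omega_6$ for $x\in\overline{S}$, $v_x=\omega_6^5$ for $x\in\overline{S}^{-1}$, and $v_x=0$ otherwise. The computation already carried out inside the proof of Lemma~\ref{imcgoa3} gives $(Ev)_\alpha=\lambda_\alpha+\mu_\alpha=\gamma_\alpha\in\mathbb{Z}$, so $Ev\in\mathbb{Q}^n$ and all three conclusions of Lemma~\ref{Ori4Nec} hold for $v$. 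Part~(iii) forces $\Im(v_x)=0$ for every $x\in\Gamma\setminus\Gamma(3)$; since $\Im(\omega_6)=\Im(\omega_6^5)\neq 0$, this yields $\overline{S}\cup\overline{S}^{-1}\subseteq\Gamma(3)$. Part~(ii) then gives $v_x=v_y$ whenever $x\in\overline{S}\subseteq\Gamma(3)$, $y\in\Gamma(3)$ and $x\approx y$; as $v$ attains the value $\omega_6$ precisely on $\overline{S}$, this forces $y\in\overline{S}$, i.e. $\langle\!\langle x\rangle\!\rangle\subseteq\overline{S}$ for every $x\in\overline{S}$. Since $\overline{S}$ is skew-symmetric by construction, we conclude $\overline{S}=\bigcup_{x\in\overline{S}}\langle\!\langle x\rangle\!\rangle\in\mathbb{E}(\Gamma)$.

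Once $\overline{S}\in\mathbb{E}(\Gamma)$ is in hand, Theorem~\ref{OrientedChara} shows that $\text{Cay}(\Gamma,\overline{S})$ is HS-integral, i.e. $\mu_\alpha\in\mathbb{Z}$ for every $\alpha$; hence $\lambda_\alpha=\gamma_\alpha-\mu_\alpha\in\mathbb{Z}$ for every $\alpha$, so $\text{Cay}(\Gamma,S\setminus\overline{S})$ is integral, completing the ``only if'' direction.

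The substance lies entirely in the middle paragraph, and the point to notice is that Lemma~\ref{Ori4Nec}, although motivated by oriented graphs, is stated for an arbitrary $v\in\mathbb{Q}(\omega_3)^n$ with rational $Ev$ and therefore applies verbatim to the connection function of a genuinely mixed Cayley graph; the real coordinates of $v$ (equal to $0$ or $1$) are harmless in part~(iii), so only the ``$\omega_6$-part'', namely $\overline{S}$, gets constrained, and parts~(ii)--(iii) pin it down to an element of $\mathbb{E}(\Gamma)$. The cyclotomic-field work one might expect — separating $\omega_6$ from the remaining roots of unity, delicate already when $3\mid n$ and handled via the factors $\Phi_{m,3}^1$ inside Lemma~\ref{Ori4Nec} — has thus been done once and for all, so no further field-theoretic estimate is needed here.
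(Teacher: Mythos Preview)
Your proof is correct, and it takes a somewhat different route from the paper's. Both arguments have the same skeleton --- write $\gamma_\alpha=\lambda_\alpha+\mu_\alpha$, show $\overline{S}\in\mathbb{E}(\Gamma)$, deduce $\mu_\alpha\in\mathbb{Z}$, and then $\lambda_\alpha=\gamma_\alpha-\mu_\alpha\in\mathbb{Z}$ --- but they differ in how they reach $\overline{S}\in\mathbb{E}(\Gamma)$ and $\mu_\alpha\in\mathbb{Z}$. The paper first subtracts $\gamma_\alpha-\gamma_{-\alpha}=\sum_{s\in\overline{S}}i\sqrt{3}(\psi_\alpha(s)-\psi_\alpha(-s))\in\mathbb{Z}$ to isolate the skew-symmetric contribution, then invokes Lemma~\ref{CharaNewIntegSum} (which applies Lemma~\ref{Ori4Nec} to the purely imaginary vector with entries $\pm i\sqrt{3}$) and Lemma~\ref{Sqrt3NecessIntSum} to get $\sum_{s\in\overline{S}\cup\overline{S}^{-1}}\psi_\alpha(s)\in\mathbb{Z}$, and finally observes that $\mu_\alpha$ is simultaneously a rational number and an algebraic integer. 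You instead feed the \emph{full} connection function $f_S$ (with values $0,1,\omega_6,\omega_6^5$) directly into Lemma~\ref{Ori4Nec}; part~(iii) handles the real entries $0,1$ harmlessly and forces $\overline{S}\cup\overline{S}^{-1}\subseteq\Gamma(3)$, part~(ii) gives $\overline{S}\in\mathbb{E}(\Gamma)$, and then you cite Theorem~\ref{OrientedChara} for $\mu_\alpha\in\mathbb{Z}$. Your route is cleaner: it bypasses Lemmas~\ref{CharaNewIntegSum} and~\ref{Sqrt3NecessIntSum} and the ``rational algebraic integer'' step, exploiting the full generality of Lemma~\ref{Ori4Nec} (valid for any $v\in\mathbb{Q}(\omega_3)^n$, not just purely imaginary ones). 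The paper's route, on the other hand, makes explicit the identity $\gamma_\alpha-\gamma_{-\alpha}=i\sqrt{3}\sum_{s\in\overline{S}}(\psi_\alpha(s)-\psi_\alpha(-s))$, which is reused later in Section~\ref{sec5}.
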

	\begin{proof}
	Assume that the mixed Cayley graph $\text{Cay}(\Gamma,S)$ is HS-integral. Let the HS-spectrum of $\text{Cay}(\Gamma,S)$ be $\{\gamma_{\alpha}: \alpha \in \Gamma\}$, where $\gamma_{\alpha}=\lambda_{\alpha} +\mu_{\alpha}$, 
	$$\lambda_{\alpha}= \sum\limits_{s \in S\setminus \overline{S}} \psi_{\alpha}(s) \textnormal{ and } \mu_{\alpha}=  \sum\limits_{s \in \overline{S}} (\omega_6 \psi_{\alpha}(s) + \omega_6^5 \psi_{\alpha}(-s)), \textnormal{ for } \alpha \in \Gamma.$$
Note that $\{\lambda_{\alpha}: \alpha \in \Gamma\}$ is the spectrum of $\text{Cay}(\Gamma, S\setminus \overline{S})$ and $\{\mu_{\alpha}: \alpha \in \Gamma\}$ is the HS-spectrum of $\text{Cay}(\Gamma,\overline{S})$.  By assumption $\gamma_{\alpha} \in \mathbb{Z}$, and so $ \gamma_{\alpha} - \gamma_{-\alpha}= \sum\limits_{s\in \overline{S}}i\sqrt{3}(\psi_{\alpha}(s)-\psi_{\alpha}(-s))  \in \mathbb{Z}$ for all $ \alpha \in \Gamma$. By Lemma \ref{Sqrt3NecessIntSum}, we get $\sum\limits_{ s \in \overline{S} \cup \overline{S}^{-1}}\psi_{\alpha}(s) \in \mathbb{Z}$ for all $ \alpha \in \Gamma$. Note that $\mu_{\alpha}$ is an algebraic integer. Also 
\begin{align*}
\mu_{\alpha}= \frac{1}{2} \sum\limits_{ s \in \overline{S} \cup \overline{S}^{-1}}\psi_{\alpha}(s) + \frac{1}{2} \sum\limits_{s\in \overline{S}}i\sqrt{3}(\psi_{\alpha}(s)-\psi_{\alpha}(-s))\in \mathbb{Q}.
\end{align*}
Hence $\mu_{\alpha}$ is an integer for each $ \alpha \in \Gamma$. Thus $\text{Cay}(\Gamma,\overline{S})$ is HS-integral. Now we have $\gamma_{\alpha}, \mu_{\alpha} \in \mathbb{Z}$, and so $\lambda_{\alpha} = \gamma_{\alpha} -\mu_{\alpha} \in \mathbb{Z}$ for each $ \alpha \in \Gamma$. Hence $\text{Cay}(\Gamma,S\setminus \overline{S})$ is integral.
			
			Conversely, assume that $\text{Cay}(\Gamma,S\setminus \overline{S})$ is integral and $\text{Cay}(\Gamma, \overline{S})$ is HS-integral. Then Lemma \ref{imcgoa3} implies that $\text{Cay}(\Gamma,S)$ is integral.
	\end{proof}
	
\begin{theorem}\label{CharaHSintMixed}
Let $\Gamma$ be an abelian group. The mixed Cayley graph  $\text{Cay}(\Gamma, S)$ is HS-integral if and only if $S \setminus \overline{S} \in \mathbb{B}(\Gamma)$ and $\overline{S} \in \mathbb{E}(\Gamma)$.
\end{theorem}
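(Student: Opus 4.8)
The plan is to assemble the theorem directly from the three preceding results: the decomposition Lemma~\ref{SeperatIntegMixedGraph}, the characterization of integral simple Cayley graphs over abelian groups (Theorem~\ref{Cayint}), and the characterization of HS-integral oriented Cayley graphs (Theorem~\ref{neccori}). The whole argument is a substitution; the analytic and number-theoretic work has already been carried out in establishing those lemmas.

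First I would invoke Lemma~\ref{SeperatIntegMixedGraph}: $\text{Cay}(\Gamma,S)$ is HS-integral if and only if $\text{Cay}(\Gamma,S\setminus\overline{S})$ is integral and $\text{Cay}(\Gamma,\overline{S})$ is HS-integral. To apply the two specialised characterizations I would recall the observations made right after the definition of $\overline{S}$: the set $S\setminus\overline{S}$ is symmetric, so $\text{Cay}(\Gamma,S\setminus\overline{S})$ is a simple Cayley graph (for which HS-integrality and ordinary integrality coincide), while $\overline{S}$ is skew-symmetric, so $\text{Cay}(\Gamma,\overline{S})$ is an oriented Cayley graph.

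Second, apply Theorem~\ref{Cayint} to the symmetric connection set $S\setminus\overline{S}$: $\text{Cay}(\Gamma,S\setminus\overline{S})$ is integral if and only if $S\setminus\overline{S}\in\mathbb{B}(\Gamma)$. Then apply Theorem~\ref{neccori} to the skew-symmetric connection set $\overline{S}$: $\text{Cay}(\Gamma,\overline{S})$ is HS-integral if and only if $\overline{S}\in\mathbb{E}(\Gamma)$. Substituting these two equivalences into the biconditional from the first step yields precisely that $\text{Cay}(\Gamma,S)$ is HS-integral if and only if $S\setminus\overline{S}\in\mathbb{B}(\Gamma)$ and $\overline{S}\in\mathbb{E}(\Gamma)$, which is the claim.

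There is no real obstacle remaining: the substantive steps have been spent on Lemma~\ref{SeperatIntegMixedGraph} (which in turn rests on Lemma~\ref{Sqrt3NecessIntSum} and the algebraic-integer-plus-rational argument) and on Theorem~\ref{neccori} (which rests on Lemma~\ref{Ori4Nec} and the factorization of $\Phi_m(x)$ over $\mathbb{Q}(\omega_3)$). The only point deserving a sentence of care is that the decomposition $S = (S\setminus\overline{S})\,\sqcup\,\overline{S}$ feeds a symmetric set into the first characterization and a skew-symmetric set into the second, so both applications are legitimate and the matching of pieces is automatic.
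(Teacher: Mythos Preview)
Your proposal is correct and follows essentially the same approach as the paper: invoke Lemma~\ref{SeperatIntegMixedGraph} to split the HS-integrality of $\text{Cay}(\Gamma,S)$ into integrality of $\text{Cay}(\Gamma,S\setminus\overline{S})$ and HS-integrality of $\text{Cay}(\Gamma,\overline{S})$, then apply Theorem~\ref{Cayint} and Theorem~\ref{neccori} respectively after noting that $S\setminus\overline{S}$ is symmetric and $\overline{S}$ is skew-symmetric. The paper's proof is precisely this three-step substitution.
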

\begin{proof}
By Lemma~\ref{SeperatIntegMixedGraph}, the mixed Cayley graph $\text{Cay}(\Gamma, S)$ is HS-integral if and only if $\text{Cay}(\Gamma, S \setminus \overline{S})$ is integral and $\text{Cay}(\Gamma, \overline{S})$ is HS-integral. Note that $S \setminus \overline{S}$ is a symmetric set and $\overline{S}$ is a skew-symmetric set. Thus by Theorem~\ref{Cayint}, $\text{Cay}(\Gamma, S \setminus \overline{S})$ is integral if and only if $S \setminus \overline{S} \in \mathbb{B}(\Gamma)$. By Theorem~\ref{neccori}, $\text{Cay}(\Gamma, \overline{S})$ is HS-integral if and only if $\overline{S} \in \mathbb{E}(\Gamma)$. Hence the result follows.
\end{proof}

The following example illustrates Theorem~\ref{CharaHSintMixed}.

\begin{ex}\label{ex2} Consider $\Gamma= \mathbb{Z}_3 \times \mathbb{Z}_3$ and $S=\{ (0,1),(1,0), (2,0)\}$. The mixed graph  $\text{Cay}(\mathbb{Z}_3 \times \mathbb{Z}_3, S)$ is shown in Figure~\ref{b}. Here $\overline{S}=\{(0,1)\}=\langle\!\langle (0,1)\rangle\!\rangle\in \mathbb{E}(\Gamma)$ and $S\setminus\overline{S}=\{(1,0),(2,0)\} =[(1,0)]\in \mathbb{B}(\Gamma)$. Further, using Lemma~\ref{imcgoa3}  and Equation~\ref{character}, the HS-eigenvalues of  $\text{Cay}(\mathbb{Z}_3 \times \mathbb{Z}_3, S)$ are obtained as
$$\gamma_\alpha=[\psi_{\alpha}(1,0) + \psi_{\alpha}(2,0)] + [\omega_6 \psi_{\alpha}(0,1) + \omega_6^5 \psi_{\alpha}(0,2)] ~~\text{for each }\alpha \in \mathbb{Z}_3 \times \mathbb{Z}_3.$$
One can see that $\gamma_{(0,1)}=\gamma_{(1,0)}= \gamma_{(1,2)}= \gamma_{(2,0)}=\gamma_{(2,2)}=0$, $\gamma_{(0,0)}=\gamma_{(0,2)}=3$ and $\gamma_{(1,1)} = \gamma_{(2,1)}=-3$. Thus $\text{Cay}(\mathbb{Z}_3 \times \mathbb{Z}_3, S)$ is HS-integral.
\end{ex}

%%%%%%%%%%%%%%%%%%%%%%%%%%%%%%%%%%%%%%%%%%%%%%%%%%%%%%%%%%%%%%%%%
\section{Characterization of Eisenstein integral mixed Cayley graphs over abelian groups}\label{sec5}
Let $\Gamma$ be a finite abelian group of order $n$. For an $S\subseteq \Gamma$ with $0\notin S$, consider the function $\alpha:\Gamma \rightarrow \{0,1\}$ defined by 
\[\alpha(s)= \left\{ \begin{array}{rl}
		1 & \mbox{if } s\in S \\
		0 &   \mbox{otherwise}, 
	\end{array}\right.\]
in Theorem~\ref{EigNorColCayMix}. We see that $\sum\limits_{s \in S} \psi_{\alpha}(s)$ is an eigenvalue of the mixed Cayley graph $\text{Cay}(\Gamma, S)$ for all $\alpha \in \Gamma$. For $x \in \Gamma, y\in \Gamma(3)$ and $\alpha \in \Gamma$, define 
$$C_x(\alpha)=\sum_{s \in [x] } \psi_{\alpha}(s)~~\text{ and }~~ T_y(\alpha)= \sum_{s \in \langle\!\langle y \rangle\!\rangle} i\sqrt{3}( \psi_{\alpha}(s)-\psi_{\alpha}(-s)).$$
  Note that $C_x(\alpha)$ is an eigenvalue of the mixed Cayley graph $\text{Cay}(\Gamma, [x])$ for each $\alpha \in \Gamma$.

\begin{lema}\label{Tn(q)IsIntegerForAll}
Let $x \in \Gamma(3)$. Then $T_x(\alpha)$ is an integer for each $\alpha \in \Gamma$.
\end{lema}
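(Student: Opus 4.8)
The plan is to express $T_x(\alpha)$ in terms of two quantities already known to be integers: the sum $Z_x(\alpha)$ of Lemma~\ref{integerEigenvalue}, and the eigenvalue $C_x(\alpha)$ of the simple Cayley graph $\text{Cay}(\Gamma,[x])$. The bridge is the identity $\omega_6=\frac{1}{2}+\frac{i\sqrt3}{2}$, $\omega_6^5=\frac{1}{2}-\frac{i\sqrt3}{2}$, which splits the summand defining $Z_x(\alpha)$ into a real part and an imaginary part:
$$\omega_6\psi_{\alpha}(s)+\omega_6^5\psi_{\alpha}(-s)=\frac{1}{2}\bigl(\psi_{\alpha}(s)+\psi_{\alpha}(-s)\bigr)+\frac{i\sqrt3}{2}\bigl(\psi_{\alpha}(s)-\psi_{\alpha}(-s)\bigr).$$

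First I would sum this over $s\in\langle\!\langle x \rangle\!\rangle$. Using $-\langle\!\langle x \rangle\!\rangle=\langle\!\langle -x \rangle\!\rangle$ and that $[x]=\langle\!\langle x \rangle\!\rangle\cup\langle\!\langle -x \rangle\!\rangle$ is a disjoint union (Lemma~\ref{lemanecc}, parts (iii) and (iv)), the terms with a plus sign collapse to
$$\sum_{s\in\langle\!\langle x \rangle\!\rangle}\bigl(\psi_{\alpha}(s)+\psi_{\alpha}(-s)\bigr)=\sum_{s\in\langle\!\langle x \rangle\!\rangle}\psi_{\alpha}(s)+\sum_{s\in\langle\!\langle -x \rangle\!\rangle}\psi_{\alpha}(s)=\sum_{s\in[x]}\psi_{\alpha}(s)=C_x(\alpha),$$
whereas the terms with a minus sign are exactly $\frac{1}{2}T_x(\alpha)$ by the definition of $T_x$. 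Hence $Z_x(\alpha)=\frac{1}{2}C_x(\alpha)+\frac{1}{2}T_x(\alpha)$, that is,
$$T_x(\alpha)=2Z_x(\alpha)-C_x(\alpha).$$

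It then remains to observe that both terms on the right are integers. By Lemma~\ref{integerEigenvalue}, $Z_x(\alpha)\in\mathbb{Z}$. For $C_x(\alpha)$: the set $[x]$ is an atom of $\mathbb{B}(\Gamma)$ by Lemma~\ref{atomsboolean}, hence $[x]\in\mathbb{B}(\Gamma)$, and it is symmetric since $\langle y\rangle=\langle x\rangle$ forces $\langle -y\rangle=\langle x\rangle$; so by Theorem~\ref{Cayint} the simple Cayley graph $\text{Cay}(\Gamma,[x])$ is integral, and $C_x(\alpha)=\sum_{s\in[x]}\psi_{\alpha}(s)$, being one of its eigenvalues (Corollary~\ref{simpleAbelianEigBabai}), lies in $\mathbb{Z}$. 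Therefore $T_x(\alpha)=2Z_x(\alpha)-C_x(\alpha)\in\mathbb{Z}$ for every $\alpha\in\Gamma$. I do not expect a genuine obstacle here: the only thing to notice is that one should route through the already-proven integrality of $Z_x(\alpha)$ rather than redo the induction of Lemma~\ref{integerEigenvalue}; an independent inductive argument modelled on that lemma would also work, but would be longer.
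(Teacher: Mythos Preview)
Your proof is correct and follows essentially the same route as the paper: derive the identity $T_x(\alpha)=2Z_x(\alpha)-C_x(\alpha)$ from $\omega_6=\tfrac{1}{2}+\tfrac{i\sqrt3}{2}$, then invoke Lemma~\ref{integerEigenvalue} for $Z_x(\alpha)$ and the integrality of the Cayley graph $\text{Cay}(\Gamma,[x])$ for $C_x(\alpha)$. The paper is slightly terser about the second point, but you have only made explicit what it uses implicitly.
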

\begin{proof}
We have
\begin{equation*}
			\begin{split}
Z_x(\alpha) 	= \sum\limits_{s \in \langle\!\langle x \rangle\!\rangle}\left( \omega_6\psi_{\alpha}(s) + \omega_6^5 \psi_{\alpha}(-s)\right)
			&= \frac{1}{2} \sum\limits_{s\in [x]}\psi_{\alpha}(s) + \frac{i\sqrt{3}}{2} \sum\limits_{s \in \langle\!\langle x \rangle\!\rangle} (\psi_{\alpha}(s) - \psi_{\alpha}(-s))\\
			&= \frac{C_x(\alpha)}{2} + \frac{T_x(\alpha)}{2}.\\
			\end{split} 
	\end{equation*}
By Lemma~\ref{integerEigenvalue}, $T_x(\alpha)=  2 Z_x(\alpha) - C_x(\alpha)$ is an integer for each $\alpha \in \Gamma$.
\end{proof}

\begin{lema} Let $\Gamma$ be a finite abelian group and the order of $x\in \Gamma(3)$ be $3m$, with $m \not\equiv 0 \Mod 3$. Then
$$x^m [x^3] = \left\{ \begin{array}{ll}
			\langle\!\langle x \rangle\!\rangle & \mbox{if } m \equiv 1 \Mod 3  \\
			\langle\!\langle -x \rangle\!\rangle & \mbox{if } m \equiv 2 \Mod 3.
		\end{array}\right. $$
\end{lema}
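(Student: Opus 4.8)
The plan is to transport both sides of the identity into the cyclic group $\langle x\rangle$ (to which they both belong) and read off the answer via the Chinese Remainder Theorem. Write $\text{ord}(x)=3m$; since $\gcd(3,m)=1$, the map $\theta\colon x^a\mapsto(a\bmod 3,\,a\bmod m)$ is a group isomorphism $\langle x\rangle\to\mathbb{Z}/3\mathbb{Z}\times\mathbb{Z}/m\mathbb{Z}$. In particular $\theta(x^3)=(0,\,3\bmod m)$ and $\theta(x^m)=(m\bmod 3,\,0)$, and since $3$ is invertible modulo $m$, the subgroup $\langle x^3\rangle$ is carried exactly onto $\{0\}\times\mathbb{Z}/m\mathbb{Z}$.

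First I would identify $[x^3]$. By Lemma~\ref{atomsboolean}, $[x^3]=\{\,y:\langle y\rangle=\langle x^3\rangle\,\}$ is precisely the set of generators of $\langle x^3\rangle$, so $\theta([x^3])$ is the set of generators of $\{0\}\times\mathbb{Z}/m\mathbb{Z}$, namely $\{0\}\times(\mathbb{Z}/m\mathbb{Z})^{\times}$. Hence, writing $x^m[x^3]$ for the translate $\{x^m+s:s\in[x^3]\}$,
\[
\theta\bigl(x^m[x^3]\bigr)=(m\bmod 3,\,0)+\bigl(\{0\}\times(\mathbb{Z}/m\mathbb{Z})^{\times}\bigr)=\{m\bmod 3\}\times(\mathbb{Z}/m\mathbb{Z})^{\times}.
\]
Next I would identify $\langle\!\langle x\rangle\!\rangle$ and $\langle\!\langle -x\rangle\!\rangle$. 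By parts (i) and (ii) of Lemma~\ref{lemanecc}, applied to $x$ (so that the role of ``$m$'' there is played by $\text{ord}(x)=3m$), $\langle\!\langle x\rangle\!\rangle=\{x^j:j\in G_{3m,3}^1(1)\}$ and $\langle\!\langle -x\rangle\!\rangle=\{x^j:j\in G_{3m,3}^2(1)\}$. For fixed $r\in\{1,2\}$, the condition $j\equiv r\pmod 3$ pins the first coordinate of $\theta(x^j)$ to $r\neq 0$ (so the range restriction $1\le j\le 3m-1$ excludes nothing relevant), while $\gcd(j,3m)=1$ is equivalent to $\gcd(j,3)=1$ together with $\gcd(j,m)=1$; the first of these is automatic once $j\equiv r\pmod 3$, and the second says exactly that $j\bmod m\in(\mathbb{Z}/m\mathbb{Z})^{\times}$. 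Letting $j$ range and using surjectivity of the CRT map, $\theta(\langle\!\langle x\rangle\!\rangle)=\{1\}\times(\mathbb{Z}/m\mathbb{Z})^{\times}$ and $\theta(\langle\!\langle -x\rangle\!\rangle)=\{2\}\times(\mathbb{Z}/m\mathbb{Z})^{\times}$.

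Comparing the computations then finishes the argument: when $m\equiv 1\pmod 3$, $\theta(x^m[x^3])=\{1\}\times(\mathbb{Z}/m\mathbb{Z})^{\times}=\theta(\langle\!\langle x\rangle\!\rangle)$, and when $m\equiv 2\pmod 3$, $\theta(x^m[x^3])=\{2\}\times(\mathbb{Z}/m\mathbb{Z})^{\times}=\theta(\langle\!\langle -x\rangle\!\rangle)$; applying $\theta^{-1}$ yields the stated identities, and the corner case $m=1$ (where $\text{ord}(x)=3$, $x^3=0$, $[x^3]=\{0\}$) is subsumed, since then both sides collapse to $\{x\}=\langle\!\langle x\rangle\!\rangle$. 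I do not expect a genuine obstacle here; the only points needing care are bookkeeping ones — verifying that $\langle x^3\rangle$ maps onto $\{0\}\times\mathbb{Z}/m\mathbb{Z}$ (this is exactly where $\gcd(3,m)=1$ enters) and that ``set of generators of $\langle x^3\rangle$'' is the correct reading of $[x^3]$ via Lemma~\ref{atomsboolean}. A CRT-free alternative would instead write $x^m[x^3]=\{x^{m+3k}:\gcd(k,m)=1\}$ (exponents read modulo $3m$), check directly by a short prime-by-prime argument using $3\nmid m$ that $\gcd(m+3k,3m)=1\iff\gcd(k,m)=1$ while $m+3k\equiv m\pmod 3$, and then match with $G_{3m,3}^{r}(1)$; there the main nuisance is keeping straight the reduction of exponents modulo $3m$ versus the reduction of the index $k$ modulo $m$.
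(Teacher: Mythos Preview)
Your CRT argument is correct. It differs from the paper's proof, which is essentially the ``CRT-free alternative'' you sketch at the end: the paper writes a general element of $x^m[x^3]$ as $x^{m+3r}$ with $r\in G_m(1)$, checks directly that $\gcd(m+3r,3m)=1$ and $m+3r\equiv m\pmod 3$, and hence obtains the one inclusion $x^m[x^3]\subseteq\langle\!\langle x\rangle\!\rangle$ (resp.\ $\langle\!\langle -x\rangle\!\rangle$); it then finishes not by proving the reverse inclusion but by observing that both sides have cardinality $\varphi(m)$ (since $|G_{3m,3}^1(1)|=\varphi(3m)/2=\varphi(m)$ when $\gcd(3,m)=1$). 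Your CRT route is a bit more structural and yields equality in one stroke, with no separate cardinality bookkeeping; the paper's route is shorter on setup but trades the bijectivity check for a size count. Either way the substance is the same elementary number theory, and your identification of the one genuine hypothesis-use (that $\gcd(3,m)=1$ forces $\langle x^3\rangle$ onto $\{0\}\times\mathbb{Z}/m\mathbb{Z}$) matches where the paper uses it.
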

\begin{proof}
Assume that $m \equiv 1 \Mod 3$. Let $y \in x^m [x^3]$. Then $y=x^{m+3r}$ for some $r\in G_m(1)$. We have $\gcd(r,m)=1$, which implies that $\gcd(m+3r, 3m)=1$ and $m+3r \equiv 1 \Mod 3$. Thus $x^m [x^3] \subseteq \langle\!\langle x \rangle\!\rangle$. Since size of both $x^m [x^3]$ and $\langle\!\langle x \rangle\!\rangle$ are same, so $x^m [x^3] = \langle\!\langle x \rangle\!\rangle$. Similarly, if $m \equiv 2 \Mod 3$ then we have $x^m [x^3] = \langle\!\langle -x \rangle\!\rangle$.
\end{proof}
	
\begin{lema}\label{Sum3mequalRamanujan}
Let $\Gamma$ be a finite abelian group and the order of $x\in \Gamma(3)$ be $3m$, with $m \not\equiv 0 \Mod 3$. Then 
\begin{equation*}
	\begin{split}
		T_x(\alpha) =  \left\{ \begin{array}{cl}
			\pm3 C_{x^3}(\alpha) & \mbox{if } \psi_{\alpha}(x^m) \neq 1 \\
			0 & \mbox{otherwise.}
		\end{array}\right.		
	\end{split} 
\end{equation*} for all $\alpha \in \Gamma$.
\end{lema}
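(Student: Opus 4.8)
The plan is to collapse the character sum defining $T_x(\alpha)$ onto a single sum over the atom $[x^3]$, using the description of $\langle\!\langle x\rangle\!\rangle$ and $\langle\!\langle -x\rangle\!\rangle$ as translates of $[x^3]$ furnished by the lemma immediately above. First I record two order computations: since $\gcd(3m,3)=3$ we have $\text{ord}(x^3)=\frac{3m}{3}=m\not\equiv 0\Mod{3}$, so $x^3\notin\Gamma(3)$ and $C_{x^3}(\alpha)$ is a well-defined eigenvalue of $\text{Cay}(\Gamma,[x^3])$; and since $\gcd(3m,m)=m$ we have $\text{ord}(x^m)=3$, hence $\psi_{\alpha}(x^m)^3=\psi_{\alpha}(3x^m)=1$, so $\psi_{\alpha}(x^m)\in\{1,\omega_3,\omega_3^2\}$ for every $\alpha\in\Gamma$.

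Next I rewrite $T_x(\alpha)$. As $s$ runs over $\langle\!\langle x\rangle\!\rangle$, the element $-s$ runs over $-\langle\!\langle x\rangle\!\rangle=\langle\!\langle -x\rangle\!\rangle$ (immediate from parts (i)--(ii) of Lemma~\ref{lemanecc}), so
$$T_x(\alpha)=i\sqrt{3}\Bigl(\sum_{s\in\langle\!\langle x\rangle\!\rangle}\psi_{\alpha}(s)-\sum_{s\in\langle\!\langle -x\rangle\!\rangle}\psi_{\alpha}(s)\Bigr).$$
Applying the lemma above to $x$ and, separately, to $-x$ (noting $(-x)^3=-x^3$ and $[-x^3]=[x^3]$ since $[\cdot]$ is inverse-closed), the two sets $\langle\!\langle x\rangle\!\rangle$ and $\langle\!\langle -x\rangle\!\rangle$ are precisely $x^m+[x^3]$ and $-x^m+[x^3]$, in one order when $m\equiv1\Mod{3}$ and in the other when $m\equiv2\Mod{3}$. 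Multiplicativity of $\psi_{\alpha}$ gives $\sum_{s\in x^m+[x^3]}\psi_{\alpha}(s)=\psi_{\alpha}(x^m)\,C_{x^3}(\alpha)$ and likewise with $-x^m$, so
$$T_x(\alpha)=\pm\,i\sqrt{3}\bigl(\psi_{\alpha}(x^m)-\psi_{\alpha}(-x^m)\bigr)\,C_{x^3}(\alpha),$$
the sign being $+$ when $m\equiv1\Mod{3}$ and $-$ when $m\equiv2\Mod{3}$.

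It then remains to evaluate the scalar $i\sqrt{3}\bigl(\psi_{\alpha}(x^m)-\psi_{\alpha}(-x^m)\bigr)=i\sqrt{3}\bigl(\psi_{\alpha}(x^m)-\overline{\psi_{\alpha}(x^m)}\bigr)$ in each of the three cases for the cube root of unity $\psi_{\alpha}(x^m)$. If $\psi_{\alpha}(x^m)=1$ the scalar is $0$, so $T_x(\alpha)=0$; if $\psi_{\alpha}(x^m)=\omega_3$ then $\psi_{\alpha}(x^m)-\overline{\psi_{\alpha}(x^m)}=\omega_3-\omega_3^2=i\sqrt{3}$ and the scalar is $-3$; if $\psi_{\alpha}(x^m)=\omega_3^2$ the scalar is $+3$. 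In either of the last two cases $T_x(\alpha)=\pm3\,C_{x^3}(\alpha)$, which together with the first case is exactly the claimed dichotomy.

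This argument is essentially formal given the lemma above; the only genuine care required is bookkeeping, namely matching $\langle\!\langle x\rangle\!\rangle$ and $\langle\!\langle -x\rangle\!\rangle$ with $\pm x^m+[x^3]$ correctly in each residue class of $m$ modulo $3$, and carrying the $i\sqrt{3}$ factor through the small three-element computation so the overall coefficient lands on $\pm3$. Since the statement only asserts the coefficient is $\pm3$, the precise dependence of the sign on $m\bmod 3$ and on $\psi_{\alpha}(x^m)$ need not be tracked beyond what is convenient.
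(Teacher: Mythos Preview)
Your proof is correct and follows essentially the same approach as the paper: both use the preceding lemma to identify $\langle\!\langle x\rangle\!\rangle$ (and $\langle\!\langle -x\rangle\!\rangle$) with the coset $x^m+[x^3]$ (respectively $-x^m+[x^3]$), factor the character sum as $\psi_\alpha(\pm x^m)\,C_{x^3}(\alpha)$, and then evaluate the scalar $i\sqrt{3}\bigl(\psi_\alpha(x^m)-\overline{\psi_\alpha(x^m)}\bigr)$ on the three cube roots of unity. The only cosmetic difference is that the paper keeps the terms $\psi_\alpha(s)-\psi_\alpha(-s)$ together and passes through $-2\sqrt{3}\,\Im(\psi_\alpha(x^m))\,C_{x^3}(\alpha)$, whereas you separate the two sums first and invoke the lemma twice (once for $x$, once for $-x$); the computations are otherwise identical.
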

	\begin{proof}
		We have 
		\begin{equation*}
			\begin{split}
				T_x(\alpha) &= i\sqrt{3}   \sum\limits_{s \in \langle\!\langle x \rangle\!\rangle} (\psi_{\alpha}(s) - \psi_{\alpha}(-s)) \\
				&= \left\{ \begin{array}{rl}
					i\sqrt{3}  \sum\limits_{s \in \langle\!\langle x \rangle\!\rangle} (\psi_{\alpha}(s) - \psi_{\alpha}(-s)) & \mbox{if } m \equiv 1 \Mod 3 \\
					-i\sqrt{3}  \sum\limits_{s \in \langle\!\langle -x \rangle\!\rangle} (\psi_{\alpha}(s) - \psi_{\alpha}(-s)) & \mbox{if } m \equiv 2 \Mod 3
				\end{array}\right.\\
				&= \left\{ \begin{array}{rl}
					i\sqrt{3}  \sum\limits_{s \in x^m [x^3]} (\psi_{\alpha}(s) - \psi_{\alpha}(-s)) & \mbox{if } m \equiv 1 \Mod 3 \\
					-i\sqrt{3}  \sum\limits_{s \in x^m [x^3]} (\psi_{\alpha}(s) - \psi_{\alpha}(-s)) & \mbox{if } m \equiv 2 \Mod 3
				\end{array}\right.\\
				&= \left\{ \begin{array}{rl}
					i\sqrt{3}  \sum\limits_{s \in  [x^3]} (\psi_{\alpha}(x^m) \psi_{\alpha}(s) - \psi_{\alpha}(-x^{m}) \psi_{\alpha}(-s)) & \mbox{if } m \equiv 1 \Mod 3 \\
					-i\sqrt{3}  \sum\limits_{s \in [x^3]} (\psi_{\alpha}(x^m) \psi_{\alpha}(s) - \psi_{\alpha}(-x^m) \psi_{\alpha}(-s)) & \mbox{if } m \equiv 2 \Mod 3
				\end{array}\right.\\
				&= \left\{ \begin{array}{rl}
					-2\sqrt{3}  \Im(\psi_{\alpha}(x^m)) \sum\limits_{s \in  [x^3]} \psi_{\alpha}(s) & \mbox{if } m \equiv 1 \Mod 3 \\
					2\sqrt{3}  \Im(\psi_{\alpha}(x^m)) \sum\limits_{s \in [x^3]} \psi_{\alpha}(s) & \mbox{if } m \equiv 2 \Mod 3
				\end{array}\right.\\
				&=\pm 2\sqrt{3}  \Im(\psi_{\alpha}(x^m)) C_{x^3}(\alpha).
			\end{split} 
		\end{equation*} 
Since $\psi_{\alpha}(x^m)$ is a $3$-rd root of unity, $\Im(\psi_{\alpha}(x^m))=0$ or $\pm \frac{ \sqrt{3}}{2}$. Thus 
\begin{equation*}
	\begin{split}
T_x(\alpha) =  \left\{ \begin{array}{cl}
	\pm3 C_{x^3}(\alpha) & \mbox{if } \psi_{\alpha}(x^m) \neq 1 \\
	0 & \mbox{otherwise.}
\end{array}\right.		
	\end{split} 
\end{equation*}	
	\end{proof}

	\begin{lema}\label{Tn(q)SumEqualTo3TimesSum}
	Let $\Gamma$ be a finite abelian group and the order of $x\in \Gamma(3)$ be $k=3^tm$, with $m \not\equiv 0 \Mod 3$ and $t\geq 2$. Then $$T_x(\alpha)= \left\{ \begin{array}{ll}
					3 \sqrt{3} i \sum\limits_{r \in G_{3^{t-1}m,3}^1(1)} (\psi_{\alpha}(x^r) - \psi_{\alpha}(-x^r))  & \mbox{if } \psi_{\alpha}(x^{\frac{k}{3}}) =1 \\
					0 	  & otherwise.
				\end{array}\right.$$
	\end{lema}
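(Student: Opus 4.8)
The plan is to expand $T_x(\alpha)=i\sqrt3\sum_{r\in G_{k,3}^1(1)}\bigl(\psi_\alpha(x^r)-\psi_\alpha(x^{-r})\bigr)$, where $k=\text{ord}(x)=3^tm$ and the index set is the one parametrizing $\langle\!\langle x\rangle\!\rangle=\{x^r:r\in G_{k,3}^1(1)\}$ (Lemma~\ref{lemanecc}(i)), then to group the exponents $r$ into triples $\{q,\ q+\tfrac k3,\ q+\tfrac{2k}3\}$ so that each triple collapses via a cube-root-of-unity geometric sum. The outcome is a sum of exactly the same shape indexed by $G_{k/3,3}^1(1)=G_{3^{t-1}m,3}^1(1)$, multiplied by $3$, or $0$, according to the value of $\psi_\alpha\bigl(x^{k/3}\bigr)$, which is the asserted dichotomy.

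First I would record the arithmetic fact underlying the grouping. Since $t\ge2$, the integers $k=3^tm$ and $\tfrac k3=3^{t-1}m$ have the same set of prime divisors, and $3\mid\tfrac k3$. Consequently, for any integer $r$ the three residues $r,\ r+\tfrac k3,\ r+\tfrac{2k}3$ modulo $k$ all have the same greatest common divisor with $k$ and the same residue modulo $3$. Hence the reduction map $r\mapsto r\bmod\tfrac k3$ carries $G_{k,3}^1(1)$ onto $G_{k/3,3}^1(1)=G_{3^{t-1}m,3}^1(1)$, and every fibre is precisely a triple $\{q,\ q+\tfrac k3,\ q+\tfrac{2k}3\}$ with $q\in G_{3^{t-1}m,3}^1(1)$ (surjectivity uses that $q$ itself, viewed in $\{1,\dots,k-1\}$, already lies in $G_{k,3}^1(1)$). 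This is the only step with real content, and it is exactly where the hypothesis $t\ge2$ is essential: for $t=1$ one has $3\nmid m=\tfrac k3$, the residue mod $3$ is no longer preserved by translation by $\tfrac k3$, and indeed that case must be handled separately, as in Lemma~\ref{Sum3mequalRamanujan}.

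With this grouping in hand I would rewrite
\[
T_x(\alpha)=i\sqrt3\sum_{q\in G_{3^{t-1}m,3}^1(1)}\ \sum_{j=0}^{2}\Bigl(\psi_\alpha\bigl(x^{q+j k/3}\bigr)-\psi_\alpha\bigl(x^{-(q+j k/3)}\bigr)\Bigr),
\]
and use $\psi_\alpha\bigl(x^{q+jk/3}\bigr)=\psi_\alpha(x^q)\,\zeta^{\,j}$ with $\zeta:=\psi_\alpha\bigl(x^{k/3}\bigr)$, a cube root of unity because $x^{k/3}$ has order $3$. The inner sum over $j$ then equals $\psi_\alpha(x^q)(1+\zeta+\zeta^2)-\psi_\alpha(x^{-q})(1+\zeta^{-1}+\zeta^{-2})$, which is $3\bigl(\psi_\alpha(x^q)-\psi_\alpha(x^{-q})\bigr)$ if $\zeta=1$ and $0$ if $\zeta\ne1$ (since $1+\zeta+\zeta^2=0=1+\zeta^{-1}+\zeta^{-2}$ in that case). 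As $\zeta=\psi_\alpha\bigl(x^{k/3}\bigr)$ is precisely the quantity in the case split of the statement, and $\psi_\alpha(x^{-r})=\psi_\alpha(-x^r)$, this gives $T_x(\alpha)=3\sqrt3\,i\sum_{r\in G_{3^{t-1}m,3}^1(1)}\bigl(\psi_\alpha(x^r)-\psi_\alpha(-x^r)\bigr)$ when $\psi_\alpha\bigl(x^{k/3}\bigr)=1$ and $T_x(\alpha)=0$ otherwise. The main obstacle, as indicated, is the combinatorial claim about the fibres of $r\mapsto r\bmod\tfrac k3$ and the need to keep careful track of the hypothesis $t\ge2$; the collapse itself is just the elementary identity $1+\zeta+\zeta^2\in\{0,3\}$ for a cube root of unity $\zeta$.
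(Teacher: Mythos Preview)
Your proof is correct and follows essentially the same approach as the paper: both hinge on the decomposition $G_{k,3}^1(1)=G_{k/3,3}^1(1)\cup\bigl(\tfrac{k}{3}+G_{k/3,3}^1(1)\bigr)\cup\bigl(\tfrac{2k}{3}+G_{k/3,3}^1(1)\bigr)$ and the collapse via $1+\zeta+\zeta^2\in\{0,3\}$ for $\zeta=\psi_\alpha(x^{k/3})$. The paper simply asserts this decomposition, whereas you supply the justification (same prime divisors for $k$ and $k/3$, and $3\mid k/3$, both requiring $t\ge2$), so if anything your write-up is more complete on the one nontrivial point.
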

	\begin{proof} Note that $ G_{k,3}^1(1)= G_{\frac{k}{3},3}^1(1) \cup \left(\frac{k}{3} + G_{\frac{k}{3},3}^1(1)\right) \cup \left(\frac{2k}{3} + G_{\frac{k}{3},3}^1(1)\right)$. Therefore
	\begin{equation*}
		\begin{split}
		T_x(\alpha) =& i\sqrt{3}  \sum\limits_{s \in \langle\!\langle x \rangle\!\rangle} (\psi_{\alpha}(s) - \psi_{\alpha}(-s)) \\
		=& i\sqrt{3}  \sum\limits_{r \in G_{k,3}^1(1) } (\psi_{\alpha}(x^r) - \psi_{\alpha}(-x^r)) \\ 	
		=& 	i\sqrt{3} \bigg[\sum\limits_{r \in G_{\frac{k}{3},3}^1(1)} (\psi_{\alpha}(x^r) - \psi_{\alpha}(-x^r)) +  \sum\limits_{r \in G_{\frac{k}{3},3}^1(1)} (\psi_{\alpha}(x^{\frac{k}{3}}) \psi_{\alpha}(x^r) - \psi_{\alpha}(x^{\frac{2k}{3}}) \psi_{\alpha}(-x^r)) \\
		&+ \sum\limits_{r \in G_{\frac{k}{3},3}^1(1)} (\psi_{\alpha}(x^{\frac{2k}{3}}) \psi_{\alpha}(x^r) - \psi_{\alpha}(x^{\frac{k}{3}}) \psi_{\alpha}(-x^r))\bigg]\\
		=& 	i\sqrt{3} \bigg[ \sum\limits_{r \in G_{\frac{k}{3},3}^1(1) }  (\psi_{\alpha}(x^r) - \psi_{\alpha}(-x^r)) +  \psi_{\alpha}(x^{\frac{k}{3}}) \sum\limits_{r \in G_{\frac{k}{3},3}^1(1) } ( \psi_{\alpha}(x^r) - \psi_{\alpha}(-x^r)) \\
		&+\psi_{\alpha}(x^{\frac{2k}{3}}) \sum\limits_{r \in G_{\frac{k}{3},3}^1(1)} ( \psi_{\alpha}(x^r) - \psi_{\alpha}(-x^r))\bigg]\\
		=& i\sqrt{3} (1+ \psi_{\alpha}(x^{\frac{k}{3}}) + \psi_{\alpha}(x^{\frac{2k}{3}})) \sum\limits_{r \in G_{\frac{k}{3},3}^1(1)}  (\psi_{\alpha}(x^r) - \psi_{\alpha}(-x^r))\\
		=& \left\{ \begin{array}{cl}
					3 \sqrt{3} i \sum\limits_{r \in G_{\frac{k}{3},3}^1(1)} (\psi_{\alpha}(x^r) - \psi_{\alpha}(-x^r))  & \mbox{if } \psi_{\alpha}(x^{\frac{k}{3}}) =1 \\
					0 	  & otherwise.
				\end{array}\right.
		\end{split} 
	\end{equation*}
\end{proof}

		\begin{lema}\label{3DividesTn(q)}
	Let $\Gamma$ be a finite abelian group and $x\in \Gamma(3)$. Then $\frac{T_x(\alpha)}{3}$ is an integer for each $\alpha \in \Gamma $.
	\end{lema}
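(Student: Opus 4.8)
The plan is to separate cases according to the exact power of $3$ dividing $\text{ord}(x)$, and in each case reduce $T_x(\alpha)$ to a quantity already shown to be an integer; no induction is needed. Write $\text{ord}(x) = 3^t m$ with $t \ge 1$ and $m \not\equiv 0 \Mod 3$.

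\emph{Case $t = 1$.} Here Lemma~\ref{Sum3mequalRamanujan} applies directly and gives $T_x(\alpha) \in \{\,0,\ \pm 3\,C_{x^3}(\alpha)\,\}$, so it is enough to know that $C_{x^3}(\alpha)$ is an integer. Since $[x^3]$ is an atom of $\mathbb{B}(\Gamma)$, either $x^3 \ne 0$, in which case $\text{Cay}(\Gamma, [x^3])$ is integral by Theorem~\ref{Cayint} and $C_{x^3}(\alpha)$ is one of its eigenvalues, or $x^3 = 0$ and then $C_{x^3}(\alpha) = \psi_\alpha(0) = 1$. In both subcases $\tfrac{T_x(\alpha)}{3} \in \{0, \pm C_{x^3}(\alpha)\} \subseteq \mathbb{Z}$.

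\emph{Case $t \ge 2$.} Put $k = \text{ord}(x) = 3^t m$. By Lemma~\ref{Tn(q)SumEqualTo3TimesSum}, $T_x(\alpha) = 0$ unless $\psi_\alpha(x^{k/3}) = 1$, and in the latter case
\[
T_x(\alpha) \;=\; 3\sqrt{3}\,i \sum_{r \in G_{3^{t-1}m,3}^1(1)} \bigl(\psi_\alpha(x^r) - \psi_\alpha(-x^r)\bigr).
\]
The key step is to recognize the remaining sum as $T_{x^3}(\gamma)$ for a suitable character index $\gamma \in \Gamma$. Because $\psi_\alpha(x^{k/3}) = 1$, the complex number $\psi_\alpha(x)$ is a $(k/3)$-th root of unity; since $x^3$ has order $k/3$ and the restriction of characters of $\Gamma$ onto the cyclic group $\langle x^3 \rangle$ is surjective, there is $\gamma \in \Gamma$ with $\psi_\gamma(x^3) = \psi_\alpha(x)$. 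Then $\psi_\gamma(x^{3r}) = \psi_\alpha(x^r)$ for all $r$, and using $\langle\!\langle x^3 \rangle\!\rangle = \{\,x^{3r} : r \in G_{3^{t-1}m,3}^1(1)\,\}$ (Lemma~\ref{lemanecc}(i)) we obtain $T_{x^3}(\gamma) = i\sqrt{3}\sum_{r \in G_{3^{t-1}m,3}^1(1)}(\psi_\alpha(x^r) - \psi_\alpha(-x^r))$, hence $T_x(\alpha) = 3\,T_{x^3}(\gamma)$. Finally $\text{ord}(x^3) = 3^{t-1}m$ is divisible by $3$, so $x^3 \in \Gamma(3)$ and Lemma~\ref{Tn(q)IsIntegerForAll} gives $T_{x^3}(\gamma) \in \mathbb{Z}$; therefore $\tfrac{T_x(\alpha)}{3} = T_{x^3}(\gamma) \in \mathbb{Z}$.

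I do not anticipate a genuine obstacle: each case is a short deduction from a lemma established immediately above. The only place needing care is the choice of $\gamma$ in the second case — one must use that $\psi_\alpha(x^{k/3}) = 1$ exactly pins $\psi_\alpha(x)$ down to the image of the restriction map $\gamma \mapsto \psi_\gamma(x^3)$ — together with the bookkeeping that the index set $G_{3^{t-1}m,3}^1(1)$ of the reduced sum indeed parametrizes $\langle\!\langle x^3 \rangle\!\rangle$.
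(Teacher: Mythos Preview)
Your proof is correct and follows the same case split as the paper (first on the $3$-adic valuation $t$ of $\text{ord}(x)$, then for $t\ge 2$ on whether $\psi_\alpha(x^{k/3})=1$). The one substantive difference is in the subcase $t\ge 2$, $\psi_\alpha(x^{k/3})=1$: you identify the reduced sum as $T_{x^3}(\gamma)$ for a suitably chosen $\gamma$ (using surjectivity of the restriction $\widehat{\Gamma}\to\widehat{\langle x^3\rangle}$) and then invoke Lemma~\ref{Tn(q)IsIntegerForAll} for $x^3$. The paper instead observes that the reduced sum equals $T_x(\alpha)/3$, which is rational by Lemma~\ref{Tn(q)IsIntegerForAll} and Lemma~\ref{Tn(q)SumEqualTo3TimesSum}, and is visibly an algebraic integer, hence an integer. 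The paper's route is a line shorter and sidesteps the character-lifting bookkeeping; your route has the advantage of giving a structural meaning to the reduced sum as another $T$-value.
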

	\begin{proof}
	Let $x\in \Gamma(3)$ and order of $x$ be $k=3^tm$ with $m\not\equiv 0 \Mod 3$ and $t\geq 1$. If $t=1$ then by Lemma~\ref{Sum3mequalRamanujan}, $\frac{T_x(\alpha)}{3}$ is an integer for each $\alpha \in \Gamma $. Assume that $t\geq 2$. If $\psi_{\alpha}(x^{\frac{k}{3}}) \neq1$ then by Lemma~\ref{Tn(q)SumEqualTo3TimesSum}, $\frac{T_x(\alpha)}{3}$ is an integer for each $\alpha \in \Gamma $. If $\psi_{\alpha}(x^{\frac{k}{3}})=1$ then by Lemma~\ref{Tn(q)IsIntegerForAll} and Lemma~\ref{Tn(q)SumEqualTo3TimesSum}, $i\sqrt{3} \sum\limits_{r \in G_{\frac{k}{3},3}^1(1)} (\psi_{\alpha}(x^r) - \psi_{\alpha}(-x^r))$ is a rational algebraic integer, and hence an integer for each $\alpha \in \Gamma $.
	\end{proof}
	
		\begin{lema}\label{3DividesTn(q)SameParity}
	Let $\Gamma$ be a finite abelian group and $x\in \Gamma(3)$. Then $C_x(\alpha)$ and $\frac{T_x(\alpha)}{3}$ are integers of the same parity for each $\alpha \in \Gamma $.
	\end{lema}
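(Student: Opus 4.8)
The plan is to reduce the statement to a short parity bookkeeping using the identity already derived in the proof of Lemma~\ref{Tn(q)IsIntegerForAll}, namely
$$2 Z_x(\alpha) = C_x(\alpha) + T_x(\alpha) \qquad \text{for all } \alpha \in \Gamma.$$
First I would record that every quantity occurring here is an \emph{integer} (not merely an algebraic integer): $Z_x(\alpha)$ is an integer by Lemma~\ref{integerEigenvalue}; $T_x(\alpha)$ is an integer by Lemma~\ref{Tn(q)IsIntegerForAll}; and $\frac{T_x(\alpha)}{3}$ is an integer by Lemma~\ref{3DividesTn(q)}. For $C_x(\alpha)$ one notes that $[x]=\{y:\langle y\rangle=\langle x\rangle\}$ is a symmetric set and an atom of $\mathbb{B}(\Gamma)$ by Lemma~\ref{atomsboolean}, hence lies in $\mathbb{B}(\Gamma)$; so by Theorem~\ref{Cayint} the Cayley graph $\text{Cay}(\Gamma,[x])$ is integral, and $C_x(\alpha)=\sum_{s\in[x]}\psi_\alpha(s)$ is one of its eigenvalues by Corollary~\ref{simpleAbelianEigBabai}, whence $C_x(\alpha)\in\mathbb{Z}$.

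Next, reducing the displayed identity modulo $2$ gives $C_x(\alpha)\equiv T_x(\alpha)\pmod 2$, i.e.\ $C_x(\alpha)$ and $T_x(\alpha)$ have the same parity. Finally, writing $u=\frac{T_x(\alpha)}{3}\in\mathbb{Z}$, we have $T_x(\alpha)=3u\equiv u\pmod 2$ since $3u-u=2u$ is even. Chaining the two congruences yields $C_x(\alpha)\equiv\frac{T_x(\alpha)}{3}\pmod 2$, which is precisely the assertion that $C_x(\alpha)$ and $\frac{T_x(\alpha)}{3}$ are integers of the same parity.

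There is essentially no obstacle: the result is a one-line consequence once all the integrality facts above are in hand. The only place demanding a moment's care is the justification that $C_x(\alpha)\in\mathbb{Z}$, which relies on recognizing the atom $[x]$ as an element of the Boolean algebra $\mathbb{B}(\Gamma)$ and appealing to the characterization of integral Cayley graphs over abelian groups in Theorem~\ref{Cayint}; everything else is immediate from Lemmas~\ref{integerEigenvalue}, \ref{Tn(q)IsIntegerForAll} and \ref{3DividesTn(q)} together with the identity $2Z_x(\alpha)=C_x(\alpha)+T_x(\alpha)$.
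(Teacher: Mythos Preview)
Your proposal is correct and follows essentially the same route as the paper: use the identity $2Z_x(\alpha)=C_x(\alpha)+T_x(\alpha)$ together with Lemmas~\ref{integerEigenvalue} and~\ref{3DividesTn(q)} to deduce the parity claim. You are in fact slightly more careful than the paper in explicitly justifying that $C_x(\alpha)\in\mathbb{Z}$ via Theorem~\ref{Cayint}, which the paper takes for granted.
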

	\begin{proof}
	Let $x\in \Gamma(3)$ and $\alpha \in \Gamma $. By Lemma~\ref{integerEigenvalue}, $T_x(\alpha)+ C_x(\alpha)=  2 Z_x(\alpha)$ is an even integer, therefore $T_x(\alpha)$ and $C_x(\alpha)$ are integers of the same parity. By Lemma~\ref{3DividesTn(q)}, $\frac{T_x(\alpha)}{3}$ is an integer. Hence $C_x(\alpha)$ and $\frac{T_x(\alpha)}{3}$ are integers of the same parity.
	\end{proof}
	
Let $S$ be a subset of $\Gamma$. For each $\alpha \in \Gamma$, define
$$f_{\alpha}(S) = \sum_{s \in S \setminus \overline{S}} \psi_{\alpha}(s)  \hspace{0.5cm}\textnormal{and}\hspace{0.5cm} g_{\alpha}(S)= \sum_{s \in \overline{S}}(\omega \psi_{\alpha}(s) + \overline{w}\psi_{\alpha}(-s)),$$ 
where $\omega=\frac{1}{2} - \frac{i\sqrt{3}}{6}$. It is clear that $f_{\alpha}(S)$ and $g_{\alpha}(S)$ are real numbers. We have 
$$\sum_{s \in S} \psi_{\alpha}(s)= f_{\alpha}(S)+ g_{\alpha}(S) + \left( \frac{-1}{2} + \frac{i\sqrt{3}}{2} \right)( g_{\alpha}(S) - g_{-\alpha}(S) ).$$ 
Note that $f_{\alpha}(S)=f_{-\alpha}(S)$ for each $\alpha \in \Gamma$. Therefore if $f_{\alpha}(S) + g_{\alpha}(S)$ is an integer for each $\alpha\in \Gamma$, then $g_{\alpha}(S)-g_{-\alpha}(S)= \left[ f_{\alpha}(S)+g_{\alpha}(S)\right]-\left[ f_{-\alpha}(S)+g_{-\alpha}(S)\right]$ is also an integer for each $\alpha \in \Gamma$. Hence, the mixed Cayley graph $\text{Cay}(\Gamma,S)$ is Eisenstein integral if and only if $f_{\alpha}(S) + g_{\alpha}(S)$ is an integer for each $\alpha \in \Gamma$.

\begin{lema}\label{CharaEisensteinIntegral}
Let $S$ be a subset of a finite abelian group $\Gamma$ with $0 \not\in S$. Then the mixed Cayley graph $\text{Cay}(\Gamma,S)$ is Eisenstein integral if and only if $2 f_{\alpha}(S)$ and $2 g_{\alpha}(S)$ are integers of the same parity for each $\alpha \in \Gamma$.
\end{lema}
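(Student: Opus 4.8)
The plan is to build on the reformulation recorded just before the statement: $\text{Cay}(\Gamma,S)$ is Eisenstein integral if and only if $f_{\alpha}(S)+g_{\alpha}(S)\in\mathbb{Z}$ for every $\alpha\in\Gamma$. I will first split $2g_{\alpha}(S)$ into a ``symmetric'' piece and a ``$\sqrt3$-piece''. Using $\Re\omega=\tfrac12$, $\Im\omega=-\tfrac{\sqrt3}{6}$ and $\psi_{\alpha}(-s)=\overline{\psi_{\alpha}(s)}$, each summand of $g_{\alpha}(S)$ equals $\omega\psi_{\alpha}(s)+\overline{\omega}\psi_{\alpha}(-s)=2\Re\bigl(\omega\psi_{\alpha}(s)\bigr)=\Re(\psi_{\alpha}(s))+\tfrac{1}{\sqrt3}\Im(\psi_{\alpha}(s))$, so that
\[
g_{\alpha}(S)+g_{-\alpha}(S)=\sum_{s\in\overline{S}\cup\overline{S}^{-1}}\psi_{\alpha}(s),\qquad g_{\alpha}(S)-g_{-\alpha}(S)=\sum_{s\in\overline{S}}it\sqrt3\,\bigl(\psi_{\alpha}(s)-\psi_{\alpha}(-s)\bigr)\ \ \text{with }t=-\tfrac13 ,
\]
where I use that $\overline{S}$ is skew-symmetric, so $\overline{S}$ and $\overline{S}^{-1}$ are disjoint and $\{-s:s\in\overline{S}\}=\overline{S}^{-1}$. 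The ``if'' direction is then immediate: if $2f_{\alpha}(S)$ and $2g_{\alpha}(S)$ are integers of the same parity, then $2\bigl(f_{\alpha}(S)+g_{\alpha}(S)\bigr)$ is even, hence $f_{\alpha}(S)+g_{\alpha}(S)\in\mathbb{Z}$ for all $\alpha$, and the reformulation gives Eisenstein integrality.

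For the ``only if'' direction, suppose $f_{\alpha}(S)+g_{\alpha}(S)\in\mathbb{Z}$ for every $\alpha$. Since $S\setminus\overline{S}$ is symmetric, $f_{\alpha}(S)=f_{-\alpha}(S)$, so
\[
g_{\alpha}(S)-g_{-\alpha}(S)=\bigl(f_{\alpha}(S)+g_{\alpha}(S)\bigr)-\bigl(f_{-\alpha}(S)+g_{-\alpha}(S)\bigr)\in\mathbb{Z}\qquad\text{for all }\alpha .
\]
By the second displayed identity this says that $\sum_{s\in\overline{S}}it\sqrt3\,(\psi_{\alpha}(s)-\psi_{\alpha}(-s))\in\mathbb{Z}$ for every $\alpha$, with $t=-\tfrac13\neq0$ rational and $\overline{S}$ skew-symmetric; hence Lemma~\ref{Sqrt3NecessIntSum} applies and gives $\sum_{s\in\overline{S}\cup\overline{S}^{-1}}\psi_{\alpha}(s)\in\mathbb{Z}$, i.e.\ $g_{\alpha}(S)+g_{-\alpha}(S)\in\mathbb{Z}$, for every $\alpha$. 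Adding and subtracting, $2g_{\alpha}(S)=\bigl(g_{\alpha}(S)+g_{-\alpha}(S)\bigr)+\bigl(g_{\alpha}(S)-g_{-\alpha}(S)\bigr)\in\mathbb{Z}$, and then $2f_{\alpha}(S)=2\bigl(f_{\alpha}(S)+g_{\alpha}(S)\bigr)-2g_{\alpha}(S)\in\mathbb{Z}$, for every $\alpha$. Finally $2f_{\alpha}(S)+2g_{\alpha}(S)=2\bigl(f_{\alpha}(S)+g_{\alpha}(S)\bigr)$ is even, so $2f_{\alpha}(S)$ and $2g_{\alpha}(S)$ have the same parity, which completes the argument.

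The only step with any real content is recognizing the ``$\sqrt3$-part'' $g_{\alpha}(S)-g_{-\alpha}(S)$ of $2g_{\alpha}(S)$ as a sum of the shape $\sum_{s\in\overline{S}}it\sqrt3(\psi_{\alpha}(s)-\psi_{\alpha}(-s))$, so that Lemma~\ref{Sqrt3NecessIntSum} can be invoked with the skew-symmetric set $\overline{S}$ and the nonzero rational $t=-\tfrac13$; everything else is routine bookkeeping with real and imaginary parts and with $f_{\alpha}(S)=f_{-\alpha}(S)$. I do not anticipate a genuine obstacle here, only the need to keep straight that it is $\overline{S}$, the skew-symmetric part of $S$, that feeds into that lemma, and to track the scalars carefully so the hypotheses are literally met.
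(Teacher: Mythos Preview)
Your argument is correct and follows essentially the same route as the paper: use the reformulation that Eisenstein integrality is equivalent to $f_{\alpha}(S)+g_{\alpha}(S)\in\mathbb{Z}$, compute $g_{\alpha}(S)-g_{-\alpha}(S)$ as $\sum_{s\in\overline{S}}(-\tfrac{1}{3})i\sqrt{3}(\psi_{\alpha}(s)-\psi_{\alpha}(-s))$, invoke Lemma~\ref{Sqrt3NecessIntSum} to get $\sum_{s\in\overline{S}\cup\overline{S}^{-1}}\psi_{\alpha}(s)\in\mathbb{Z}$, and then read off the integrality and parity of $2g_{\alpha}(S)$ and $2f_{\alpha}(S)$. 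The only cosmetic difference is that you obtain $2g_{\alpha}(S)$ as the sum $(g_{\alpha}+g_{-\alpha})+(g_{\alpha}-g_{-\alpha})$, whereas the paper writes the same identity directly as $2g_{\alpha}(S)=\sum_{s\in\overline{S}\cup\overline{S}^{-1}}\psi_{\alpha}(s)-\sum_{s\in\overline{S}}\tfrac{i\sqrt{3}}{3}(\psi_{\alpha}(s)-\psi_{\alpha}(-s))$.
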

\begin{proof} 
Suppose the mixed Cayley graph $\text{Cay}(\Gamma,S)$ is Eisenstein integral and $\alpha \in \Gamma$. Then $f_{\alpha}(S) + g_{\alpha}(S)$ and $g_{\alpha}(S)-g_{-\alpha}(S)= \sum\limits_{s\in \overline{S}} \frac{-1}{3}\left[i\sqrt{3}\left(\psi_{\alpha}(s)-\psi_{\alpha}(-s)\right)\right]$ are integers. By Lemma~\ref{Sqrt3NecessIntSum}, $\sum\limits_{s\in \overline{S}\cup \overline{S}^{-1}} \psi_{\alpha}(s) \in \mathbb{Z}$. Since 
$$2 g_{\alpha}(S)= \sum\limits_{s\in \overline{S}\cup \overline{S}^{-1}} \psi_{\alpha}(s)-  \sum\limits_{s\in \overline{S}}\frac{i\sqrt{3}}{3} (\psi_{\alpha}(s)- \psi_{\alpha}(-s)),$$ we find that $2 g_{\alpha}(S)$ is an integer. Therefore, $2 f_{\alpha}(S)=2(f_{\alpha}(S)+g_{\alpha}(S))-2g_{\alpha}(S)$ is also integer of the same parity with $g_{\alpha}(S)$.

Conversely, assume that $2 f_{\alpha}(S)$ and $2 g_{\alpha}(S)$ are integers of the same parity for each $\alpha \in \Gamma$. Then $f_{\alpha}(S) + g_{\alpha}(S)$ is an integer for each $\alpha \in \Gamma$. Hence the mixed Cayley graph $\text{Cay}(\Gamma,S)$ is Eisenstein integral.
\end{proof}

\begin{lema}\label{CharaEisensteinIntegral1}
Let $S$ be a subset of finite abelian group $\Gamma$ with $0 \not\in S$. Then the mixed Cayley graph $\text{Cay}(\Gamma,S)$ is Eisenstein integral if and only if $f_{\alpha}(S)$ and $g_{\alpha}(S)$ are integers for each $\alpha \in \Gamma$.
\end{lema}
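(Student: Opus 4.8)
The plan is to deduce this from Lemma~\ref{CharaEisensteinIntegral} together with the parity statement of Lemma~\ref{3DividesTn(q)SameParity}. The backward direction is immediate: by the remark preceding Lemma~\ref{CharaEisensteinIntegral}, $\text{Cay}(\Gamma,S)$ is Eisenstein integral exactly when $f_{\alpha}(S)+g_{\alpha}(S)\in\mathbb{Z}$ for every $\alpha\in\Gamma$, which certainly holds if $f_{\alpha}(S),g_{\alpha}(S)\in\mathbb{Z}$. So the whole content lies in the forward direction, and by Lemma~\ref{CharaEisensteinIntegral} this reduces to showing one cannot have $2f_{\alpha}(S)$ and $2g_{\alpha}(S)$ both odd; that is, I must promote ``integers of the same parity'' to ``even integers''.

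First I would establish that $\overline{S}\in\mathbb{E}(\Gamma)$. Assuming Eisenstein integrality, the identity recorded in the proof of Lemma~\ref{CharaEisensteinIntegral} shows that $g_{\alpha}(S)-g_{-\alpha}(S)=-\tfrac13\sum_{s\in\overline{S}}i\sqrt{3}\,(\psi_{\alpha}(s)-\psi_{\alpha}(-s))$ is an integer for every $\alpha$, hence $\sum_{s\in\overline{S}}i\sqrt{3}\,(\psi_{\alpha}(s)-\psi_{\alpha}(-s))$ is an integer for every $\alpha$, and Lemma~\ref{CharaNewIntegSum} (with $t=1$, applied to the skew-symmetric set $\overline{S}$) gives $\overline{S}\in\mathbb{E}(\Gamma)$. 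Thus $\overline{S}=\langle\!\langle x_1\rangle\!\rangle\cup\cdots\cup\langle\!\langle x_k\rangle\!\rangle$ for some $x_1,\dots,x_k\in\Gamma(3)$; after deleting repetitions I may assume the $\langle\!\langle x_j\rangle\!\rangle$ are pairwise distinct, and then (using $\overline{S}\cap\overline{S}^{-1}=\emptyset$ together with part~(iv) of Lemma~\ref{lemanecc}) the atoms $[x_j]=\langle\!\langle x_j\rangle\!\rangle\cup\langle\!\langle -x_j\rangle\!\rangle$ are pairwise disjoint with $\overline{S}\cup\overline{S}^{-1}=[x_1]\cup\cdots\cup[x_k]$.

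With this decomposition, the identity $2g_{\alpha}(S)=\sum_{s\in\overline{S}\cup\overline{S}^{-1}}\psi_{\alpha}(s)-\tfrac13\sum_{s\in\overline{S}}i\sqrt{3}\,(\psi_{\alpha}(s)-\psi_{\alpha}(-s))$ from the proof of Lemma~\ref{CharaEisensteinIntegral} rewrites as
\[
2g_{\alpha}(S)=\sum_{j=1}^{k}\Big(C_{x_j}(\alpha)-\tfrac{T_{x_j}(\alpha)}{3}\Big).
\]
By Lemma~\ref{3DividesTn(q)SameParity}, for each $j$ the integers $C_{x_j}(\alpha)$ and $\tfrac{T_{x_j}(\alpha)}{3}$ have the same parity, so every summand on the right is even; hence $2g_{\alpha}(S)\in 2\mathbb{Z}$, i.e.\ $g_{\alpha}(S)\in\mathbb{Z}$. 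Then $f_{\alpha}(S)=\big(f_{\alpha}(S)+g_{\alpha}(S)\big)-g_{\alpha}(S)\in\mathbb{Z}$, since Eisenstein integrality forces $f_{\alpha}(S)+g_{\alpha}(S)\in\mathbb{Z}$, which finishes the forward direction.

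I expect the main obstacle to be the bookkeeping in the second step: one needs the decomposition of $\overline{S}\cup\overline{S}^{-1}$ into the atoms $[x_j]$ to be multiplicity-free, so that the sums over $\overline{S}\cup\overline{S}^{-1}$ and over $\overline{S}$ split cleanly as $\sum_j C_{x_j}(\alpha)$ and $\sum_j T_{x_j}(\alpha)$, respectively; this is precisely where skew-symmetry of $\overline{S}$ and Lemma~\ref{lemanecc}(iv) are used. Once that is in place, the parity cancellation supplied by Lemma~\ref{3DividesTn(q)SameParity} is routine.
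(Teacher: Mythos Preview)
Your argument is correct, but it takes a substantially different and much longer route than the paper's. The paper's proof of this lemma is a two-line observation: since $f_{\alpha}(S)=\sum_{s\in S\setminus\overline{S}}\psi_{\alpha}(s)$ is a sum of roots of unity, it is an algebraic integer; thus if $2f_{\alpha}(S)\in\mathbb{Z}$ then $f_{\alpha}(S)\in\mathbb{Z}$, forcing $2f_{\alpha}(S)$ to be even, and the same-parity hypothesis from Lemma~\ref{CharaEisensteinIntegral} then makes $2g_{\alpha}(S)$ even as well. No structural information about $\overline{S}$ is needed at this stage.

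Your proof instead first establishes $\overline{S}\in\mathbb{E}(\Gamma)$ via Lemma~\ref{CharaNewIntegSum}, decomposes $2g_{\alpha}(S)$ as $\sum_j\bigl(C_{x_j}(\alpha)-\tfrac13 T_{x_j}(\alpha)\bigr)$, and invokes Lemma~\ref{3DividesTn(q)SameParity} to show each summand is even. This is exactly the computation the paper carries out later, in the converse direction of Theorem~\ref{MinCharacEisensteinInteg}; you have effectively front-loaded that work into the present lemma. So your approach is heavier here but would shorten the later theorem, while the paper's approach keeps Lemma~\ref{CharaEisensteinIntegral1} self-contained and elementary, deferring the parity machinery until it is genuinely required.
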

\begin{proof}
By Lemma~\ref{CharaEisensteinIntegral}, it is enough to show that $2 f_{\alpha}(S)$ and $2 g_{\alpha}(S)$ are integers of the same parity if and only if $f_{\alpha}(S)$ and $g_{\alpha}(S)$ are integers. If $f_{\alpha}(S)$ and $g_{\alpha}(S)$ are integers, then clearly $2 f_{\alpha}(S)$ and $2 \beta_j(S)$ are even integers. Conversely, assume that $2 f_{\alpha}(S)$ and $2 g_{\alpha}(S)$ are integers of the same parity. Since $f_{\alpha}(S)$ is an algebraic integer, the integrality of $2 f_{\alpha}(S)$ implies that $f_{\alpha}(S)$ is an integer. Thus $2 f_{\alpha}(S)$ is even, and so by the assumption $2 g_{\alpha}(S)$ is also even. Hence $g_{\alpha}(S)$ is an integer.
\end{proof}
	
\begin{theorem}\label{MinCharacEisensteinInteg}
Let $S$ be a subset of a finite abelian group $\Gamma$ with $0\notin S$. Then the mixed Cayley graph $\text{Cay}(\Gamma,S)$ is Eisenstein integral if and only if $\text{Cay}(\Gamma,S)$ is HS-integral.
\end{theorem}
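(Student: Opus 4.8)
The plan is to run both implications through the real quantities $f_\alpha(S)$ and $g_\alpha(S)$, exploiting the characterization of Lemma~\ref{CharaEisensteinIntegral1} (Eisenstein integrality $\Leftrightarrow$ $f_\alpha(S),g_\alpha(S)\in\mathbb Z$ for all $\alpha$) on one side, and the description of the HS-spectrum from Lemma~\ref{imcgoa3} on the other. The key observation is that the HS-eigenvalue $\gamma_\alpha=\lambda_\alpha+\mu_\alpha$ has $\lambda_\alpha=\sum_{s\in S\setminus\overline S}\psi_\alpha(s)=f_\alpha(S)$, so everything hinges on relating $\mu_\alpha=\sum_{s\in\overline S}\big(\omega_6\psi_\alpha(s)+\omega_6^5\psi_\alpha(-s)\big)$ to $g_\alpha(S)$.

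First I would record the identity
$$\mu_\alpha = 2\,g_{-\alpha}(S) - g_\alpha(S)\qquad\text{for all }\alpha\in\Gamma.$$
This is a short coefficient computation: expanding $\omega_6,\omega_6^5$ and $\omega=\tfrac12-\tfrac{i\sqrt3}{6},\overline\omega$ into real and imaginary parts gives $\mu_\alpha-g_\alpha(S)=\tfrac23\sum_{s\in\overline S} i\sqrt3\big(\psi_\alpha(s)-\psi_\alpha(-s)\big)$, and the sum on the right equals $-3\big(g_\alpha(S)-g_{-\alpha}(S)\big)$, the latter identity already appearing in the proof of Lemma~\ref{CharaEisensteinIntegral}. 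With this, the direction ``Eisenstein integral $\Rightarrow$ HS-integral'' is immediate: if $\text{Cay}(\Gamma,S)$ is Eisenstein integral then $g_\beta(S)\in\mathbb Z$ for every $\beta$ by Lemma~\ref{CharaEisensteinIntegral1}, hence $\mu_\alpha\in\mathbb Z$, and together with $\lambda_\alpha=f_\alpha(S)\in\mathbb Z$ we get $\gamma_\alpha\in\mathbb Z$ for all $\alpha$.

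For the converse I would use the structural characterization already available. If $\text{Cay}(\Gamma,S)$ is HS-integral then by Theorem~\ref{CharaHSintMixed} we have $S\setminus\overline S\in\mathbb B(\Gamma)$ and $\overline S\in\mathbb E(\Gamma)$. The first, via Theorem~\ref{Cayint} and Corollary~\ref{simpleAbelianEigBabai}, forces $f_\alpha(S)=\sum_{s\in S\setminus\overline S}\psi_\alpha(s)\in\mathbb Z$ for all $\alpha$. For $g_\alpha(S)$, write $\overline S=\langle\!\langle x_1\rangle\!\rangle\cup\cdots\cup\langle\!\langle x_k\rangle\!\rangle$ and split the sum defining $g_\alpha(S)$ over these blocks. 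Using that $[x_j]$ is the disjoint union $\langle\!\langle x_j\rangle\!\rangle\cup\langle\!\langle -x_j\rangle\!\rangle$ (Lemma~\ref{lemanecc}) together with the definitions of $C_{x_j}(\alpha)$ and $T_{x_j}(\alpha)$, the contribution of the block $\langle\!\langle x_j\rangle\!\rangle$ to $g_\alpha(S)$ works out to $\tfrac12\big(C_{x_j}(\alpha)-\tfrac13 T_{x_j}(\alpha)\big)$; by Lemma~\ref{3DividesTn(q)SameParity} the integers $C_{x_j}(\alpha)$ and $\tfrac13 T_{x_j}(\alpha)$ have the same parity, so each block contributes an integer and therefore $g_\alpha(S)\in\mathbb Z$ for all $\alpha$. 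Lemma~\ref{CharaEisensteinIntegral1} then gives Eisenstein integrality, completing the equivalence.

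I expect the only genuinely delicate point to be the converse, and specifically the divisibility $3\mid T_x(\alpha)$ built into Lemma~\ref{3DividesTn(q)SameParity}: without it one would only conclude $g_\alpha(S)\in\tfrac13\mathbb Z$ from HS-integrality (since $\mu_\alpha\in\mathbb Z$ for all $\alpha$ yields only $g_{-\alpha}(S)=\tfrac{2\mu_\alpha+\mu_{-\alpha}}{3}$), so the ``same parity after dividing $T_x$ by $3$'' phenomenon is exactly what closes the gap. The forward direction and the coefficient identity for $\mu_\alpha$ are routine bookkeeping.
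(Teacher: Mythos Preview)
Your argument is correct, and the converse direction (HS-integral $\Rightarrow$ Eisenstein integral) is essentially identical to the paper's: both pass through Theorem~\ref{CharaHSintMixed}, decompose $\overline S$ into classes $\langle\!\langle x_j\rangle\!\rangle$, rewrite the $j$-th block of $g_\alpha(S)$ as $\tfrac12\big(C_{x_j}(\alpha)-\tfrac13 T_{x_j}(\alpha)\big)$, and invoke Lemma~\ref{3DividesTn(q)SameParity}.

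The forward direction is where you diverge. The paper does not use your identity $\mu_\alpha=2g_{-\alpha}(S)-g_\alpha(S)$; instead it argues structurally: from $f_\alpha(S),g_\alpha(S)\in\mathbb Z$ it deduces that $g_\alpha(S)-g_{-\alpha}(S)=\sum_{s\in\overline S}\tfrac{-i\sqrt3}{3}(\psi_\alpha(s)-\psi_\alpha(-s))\in\mathbb Z$, applies Lemma~\ref{CharaNewIntegSum} to get $\overline S\in\mathbb E(\Gamma)$, uses Theorem~\ref{Cayint} to get $S\setminus\overline S\in\mathbb B(\Gamma)$, and then concludes HS-integrality via Theorem~\ref{CharaHSintMixed}. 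Your route is shorter and more elementary here---the coefficient identity $2\overline\omega-\omega=\omega_6$ and $2\omega-\overline\omega=\omega_6^5$ makes $\mu_\alpha\in\mathbb Z$ immediate once $g_{\pm\alpha}(S)\in\mathbb Z$, bypassing Lemma~\ref{CharaNewIntegSum} and Theorem~\ref{CharaHSintMixed} entirely in this direction. The paper's approach has the (minor) advantage of making the structural content explicit in both directions, but for the bare implication your path is cleaner.
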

\begin{proof}
By Lemma~\ref{CharaEisensteinIntegral1}, it is enough to show that $f_{\alpha}(S)$ and $g_{\alpha}(S)$ are integers for each $\alpha \in \Gamma$ if and only if $\text{Cay}(\Gamma,S)$ is HS-integral. Note that $f_{\alpha}(S)$ is an eigenvalue of the Cayley graph $\text{Cay}(\Gamma,S\setminus \overline{S})$. By Theorem~\ref{Cayint}, $f_{\alpha}(S)$ is an integer for each $\alpha \in \Gamma$ if and only if $S\setminus \overline{S}\in \mathbb{B}(\Gamma)$.

Assume that $f_{\alpha}(S)$ and $g_{\alpha}(S)$ are integers for each $\alpha \in \Gamma$. Then $ \sum\limits_{s\in \overline{S}} \frac{-i\sqrt{3}}{3}( \psi_{\alpha}(s)- \psi_{\alpha}(-s))=g_{\alpha}(S)-g_{-\alpha}(S)$ is also an integer for each $\alpha \in \Gamma$. Using Theorem~\ref{Cayint} and Lemma~\ref{CharaNewIntegSum}, we see that $S \setminus \overline{S}$ and $\overline{S}$ satisfy the conditions of Theorem~\ref{CharaHSintMixed}. Hence $\text{Cay}(\Gamma,S)$ is HS-integral.

Conversely, assume that $\text{Cay}(\Gamma,S)$ is HS-integral. Then $\text{Cay}(\Gamma,S\setminus \overline{S})$ is integral, and hence $f_{\alpha}(S)$ is an integer for each $\alpha \in \Gamma$. By Theorem~\ref{CharaHSintMixed}, we have $\overline{S} \in \mathbb{E}(\Gamma)$, and so $\overline{S}= \langle\!\langle x_1 \rangle\!\rangle \cup ...\cup \langle\!\langle x_k \rangle\!\rangle $ for some $x_1,...,x_k \in \Gamma (3)$. Then
	\begin{equation*}
		\begin{split}
g_{\alpha}(S)&=\frac{1}{2} \sum\limits_{s\in \overline{S}\cup \overline{S}^{-1}}\psi_{\alpha}(s) -\frac{1}{6} \sum\limits_{s\in \overline{S}}i \sqrt{3}\left(\psi_{\alpha}(s)-\psi_{\alpha}(-s)\right)\\
&=\frac{1}{2} \sum\limits_{j=1}^{k} \sum\limits_{s\in [ x_j ] }\psi_{\alpha}(s) -\frac{1}{6} \sum\limits_{j=1}^{k} \sum\limits_{s\in \langle\!\langle x_j \rangle\!\rangle}i \sqrt{3}\left(\psi_{\alpha}(s)-\psi_{\alpha}(-s)\right)\\
&= \frac{1}{2} \sum\limits_{j=1}^{k} C_{x_j}(\alpha) - \frac{1}{6} \sum\limits_{j=1}^{k} T_{x_j}(\alpha)\\
&= \frac{1}{2} \sum\limits_{j=1}^{k} \left( C_{x_j}(\alpha) - \frac{1}{3} T_{x_j}(\alpha) \right).
		\end{split} 
	\end{equation*}
By Lemma~\ref{3DividesTn(q)SameParity}, $C_{x_j}(\alpha) - \frac{1}{3} T_{x_j}(\alpha)$ is an even integer for each $j\in \{1,\ldots,k\}$. Hence $g_{\alpha}(S)$ is an integer for each $\alpha \in \Gamma$.
\end{proof} 

The following example illustrates Theorem~\ref{MinCharacEisensteinInteg}.
\begin{ex}
Consider the HS-integral graph $\text{Cay}(\mathbb{Z}_3 \times \mathbb{Z}_3, S)$ of Example~\ref{ex2}. By Theorem~\ref{MinCharacEisensteinInteg}, the graph $\text{Cay}(\mathbb{Z}_3 \times \mathbb{Z}_3, S)$ is Eisenstein integral. Indeed, the eigenvalues of  $\text{Cay}(\mathbb{Z}_3 \times \mathbb{Z}_3, S)$ are obtained as
\begin{equation*}
			\begin{split}
\gamma_{\alpha}=\psi_{\alpha}(0,1)+\psi_{\alpha}(1,0) + \psi_{\alpha}(2,0).
			\end{split}\end{equation*}
We have $\gamma_{(0,0)}=3, \gamma_{(0,1)}=2+ \omega_3, \gamma_{(0,2)}=1-\omega_3, \gamma_{(1,0)}=0, \gamma_{(1,1)}=-1+\omega_3,\gamma_{(1,2)}=-2-\omega_3, \gamma_{(2,0)}=0,$ $ \gamma_{(2,1)}=-1+\omega_3$ and $\gamma_{(2,2)}=-2-\omega_3$. Thus $\gamma_{\alpha}$ is an Eisenstein integer for each $\alpha \in \mathbb{Z}_3 \times \mathbb{Z}_3$. \qed
\end{ex}
	
%%%%%%%%%%%%%%%%%%%%%%%%%%%%%%%%%%%%%%%%%%%%%%%%%%%%%%%%%%%%%%%%%%%%%%%%%%%%%%%%%%%%%%%%%%%%%%%%%%%%%%%%%%%%%%%%	

\end{document}